\crefname{equation}{}{}
\numberwithin{equation}{section}
\newtheorem{theorem}{Theorem}[section]
\newtheorem{lemma}[theorem]{Lemma}
\newtheorem{proposition}[theorem]{Proposition}
\newtheorem{conjecture}[theorem]{Conjecture}
\newtheorem{question}[theorem]{Question}
\newtheorem{corollary}[theorem]{Corollary}
\theoremstyle{definition}
\newtheorem{definition}[theorem]{Definition}
\newtheorem{example}[theorem]{Example}
\newtheorem{remark}[theorem]{Remark}
\renewcommand\subsection{\@startsection{subsection}{2}%
  \z@{-.5\linespacing\@plus-.7\linespacing}{.5\linespacing}%
  {\normalfont\bfseries}}
\renewcommand\subsubsection{\@startsection{subsubsection}{3}%
  \z@{.5\linespacing\@plus.7\linespacing}{.5\linespacing}%
  {\normalfont\scshape}}
\title{Classical and Consecutive Pattern Avoidance in Rooted Forests}
\author{Swapnil Garg}
\address{Massachusetts Institute of Technology, Cambridge, MA 02139, USA}
\email{swapnilg@mit.edu}
\author{Alan Peng}
\address{Massachusetts Institute of Technology, Cambridge, MA 02139, USA}
\email{apeng1@mit.edu}
\date{May 2020}
\begin{document}
\maketitle
\begin{abstract}
Following Anders and Archer, we say that an unordered rooted labeled forest avoids the pattern $\sigma\in\mathcal{S}_k$ if in each tree, each sequence of labels along the shortest path from the root to a vertex does not contain a subsequence with the same relative order as $\sigma$. For each permutation $\sigma\in\mathcal{S}_{k-2}$, we construct a bijection between $n$-vertex forests avoiding $(\sigma)(k-1)k\coloneqq\sigma(1)\cdots\sigma(k-2)(k-1)k$ and $n$-vertex forests avoiding $(\sigma)k(k-1)\coloneqq\sigma(1)\cdots\sigma(k-2)k(k-1)$, giving a common generalization of results of West on permutations and Anders--Archer on forests. We further define a new object, the forest-Young diagram, which we use to extend the notion of shape-Wilf equivalence to forests. In particular, this allows us to generalize the above result to a bijection between forests avoiding $\{(\sigma_1)k(k-1), (\sigma_2)k(k-1), \dots, (\sigma_\ell)k(k-1)\}$ and forests avoiding $\{(\sigma_1)(k-1)k, (\sigma_2)(k-1)k, \dots, (\sigma_\ell)(k-1)k\}$ for $\sigma_1, \dots, \sigma_\ell \in \mathcal{S}_{k-2}$. Furthermore, we give recurrences enumerating the forests avoiding $\{123\cdots k\}$, $\{213\}$, and other sets of patterns. Finally, we extend the Goulden--Jackson cluster method to study consecutive pattern avoidance in rooted trees as defined by Anders and Archer. Using the generalized cluster method, we prove that if two length-$k$ patterns are strong-c-forest-Wilf equivalent, then up to complementation, the two patterns must start with the same number. We also prove the surprising result that the patterns $1324$ and $1423$ are strong-c-forest-Wilf equivalent, even though they are not c-Wilf equivalent with respect to permutations.
\end{abstract}

\section{Introduction}
In this paper we investigate both classical and consecutive pattern avoidance in rooted forests. A permutation $\pi$ is said to avoid a pattern $\sigma$, another permutation, if no subsequence of $\pi$ has its elements in the same relative order as $\sigma$. Pattern avoidance was first introduced by Knuth in \cite{knuth} to investigate the stack-sorting map, but it has since been generalized to apply to many non-linear objects, including various kinds of trees. We study pattern avoidance on unordered (i.e., non-planar) rooted labeled forests, a notion recently introduced by Anders and Archer in \cite{andersarcher}. Though general pattern avoidance in unordered forests was first explored in \cite{andersarcher}, specific cases have been previously studied. Much research has been done on increasing trees, i.e., trees avoiding the pattern $21$ \cite{increasing1, increasing2, increasing3}. The more general structure of posets, of which unordered forests are a specific example, was studied in a pattern avoidance context by Hopkins and Weiler \cite{poset}. Furthermore, binary trees avoiding a given binary tree structure were studied by Rowland in \cite{rowland}.

The study of classical pattern avoidance has yielded many results in enumerative combinatorics. Two sets of patterns are said to be \textit{Wilf equivalent} if for all positive integers $n$, the number of length-$n$ permutations avoiding the first set is the same as the number of length-$n$ permutations avoiding the second set. Two individual patterns are \textit{Wilf equivalent} if they are Wilf equivalent as single-pattern sets. For example, the number of length-$n$ permutations avoiding a length-$3$ pattern, such as $123$, is the $n$th Catalan number $C_n=\frac{1}{n+1}\binom{2n}{n}$, a fact shown for instance in \cite{simionschmidt}. So, all patterns of length $3$ are Wilf equivalent. Nonrecursive formulas for the number of permutations avoiding a single length-$4$ pattern are known except for those patterns in the Wilf equivalence class of $1324$. For this pattern, a variable-dimension recurrence due to Marinov and Radoi\v ci\'c is given in \cite{rec1324}. Numerous nontrivial Wilf equivalences have been discovered, including the Wilf equivalence between the single-pattern sets $123$ and $132$ through the Simion--Schmidt bijection, which was generalized by J. West to longer patterns \cite{westthesis}, and then generalized further by Backelin, West, and Xin \cite{bwx}.

Work has also been done on pattern avoidance in rooted forests. Anders and Archer enumerated forests avoiding certain sets of patterns, mostly consisting of length-$3$ patterns. They also defined the notion of \textit{forest-Wilf equivalence}, a generalization of Wilf equivalence, and studied forest-Wilf equivalences between certain sets of patterns. They proved the following theorem, generalizing the Simion--Schmidt bijection:
\begin{theorem}[{\cite[Theorem~2]{andersarcher}}]\label{thm:anders-archer}
The patterns $321$ and $312$ are forest-Wilf equivalent.
\end{theorem}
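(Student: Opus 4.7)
The plan is to adapt the classical Simion–Schmidt bijection from permutations to unordered labeled forests. Call a vertex $v$ a \emph{record} of $F$ if its label is strictly greater than every proper ancestor's label; in particular, every root is a record and, along any root-to-vertex path, the records are exactly the left-to-right maxima of the label sequence. The standard classification of $321$- and $312$-avoiding permutations by their left-to-right maxima has a forest analogue: $F$ avoids $321$ iff for every root-to-vertex path and every record $M$ on the path, the labels less than $M$ appearing after $M$ on the path are strictly increasing; $F$ avoids $312$ iff those labels are strictly decreasing.

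The bijection $\Phi$ from $321$-avoiding to $312$-avoiding $n$-vertex forests preserves the underlying unlabeled forest, the set of record vertices, and the labels at records, and reassigns the non-record labels greedily. Fix a canonical ordering of the siblings at each internal vertex depending only on the unlabeled tree shape, and traverse the vertices in the resulting preorder. At each non-record $v$ with nearest record ancestor of label $\ell$, assign to $v$ the \emph{largest} not-yet-used label from the multiset of non-record labels of $F$ that is strictly less than $\ell$. Define $\Phi^{-1}$ on $312$-avoiding forests by the symmetric \emph{smallest-first} rule. Because each non-record receives a label less than $\ell$, records are preserved; because the largest-first and smallest-first rules are inverse as total orderings on each successive pool, $\Phi$ and $\Phi^{-1}$ are mutual inverses. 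Verifying that $\Phi(F)$ avoids $312$ reduces via the characterization above to showing that the preorder largest-first rule places the labels less than each record $M$ appearing after $M$ on any path in strictly decreasing order.

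The principal obstacle is to show that the greedy rule never gets stuck, i.e., that at every non-record $v$ the pool of unused non-record labels contains at least one value strictly less than $\ell$. In the permutation case this follows from a short bucket count. In the forest case the pool is shared across all record regions, and the greedy rule may pull a label less than $\ell$ from a region of a deeper record. The key invariant to maintain throughout the preorder traversal is that after any prefix of processed non-records, for every record $r$, the number of remaining pool labels less than $\ell(r)$ is at least the number of unprocessed non-records whose greedy allotment must fall below $\ell(r)$. Establishing this reservoir-style invariant is the technical heart of the argument, and is where the branching structure of forests genuinely complicates the one-dimensional Simion–Schmidt construction.
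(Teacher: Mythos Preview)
Your approach has a genuine gap: the greedy Simion--Schmidt rule is not well-defined on \emph{unordered} forests. You fix ``a canonical ordering of the siblings at each internal vertex depending only on the unlabeled tree shape,'' but when two siblings have isomorphic unlabeled subtrees there is no such canonical choice, and the output of your largest-first rule genuinely depends on which branch is visited first. Because the rule draws from a single global pool of non-record labels, reordering incomparable branches changes which labels are still available when later branches are processed.

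Here is a five-vertex counterexample. Take the tree with root $r$ and children $a,b$, where $a$ has a single child $c$ and $b$ has a single child $d$; the subtrees at $a$ and $b$ have the same unlabeled shape. Label $r=3$, $a=5$, $b=1$, $c=2$, $d=4$; this avoids $321$. The records are $r,a,d$ with labels $3,5,4$; the non-records are $c,b$ with label pool $\{1,2\}$, and their nearest record ancestors have labels $5$ and $3$ respectively. If preorder visits the $a$-branch first, the rule assigns $c\gets 2$ then $b\gets 1$, producing the tree with root-to-leaf label sequences $3,5,2$ and $3,1,4$. If it visits the $b$-branch first, it assigns $b\gets 2$ then $c\gets 1$, producing $3,5,1$ and $3,2,4$. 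Both outputs avoid $312$, but they are \emph{different} unordered labeled trees (the child of the vertex labeled $5$ is $2$ in one and $1$ in the other). So $\Phi$ is not a well-defined map on unordered forests, and the reservoir invariant you isolate is not the real obstruction.

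The paper's proof (via the generalization in \cref{thm:westbijection}, which after complementation specializes to $321$ versus $312$) sidesteps this by never introducing a sibling order. Its bijection acts on each special vertex $v$---exactly your non-records, after complementation---by a \emph{local} relabeling of the subtree rooted at $v$: give $v$ an extremal label from that subtree's own label set and shift the descendants' labels to preserve their relative order. These shuffle and antishuffle operations commute across incomparable subtrees, so the composite map depends only on the ancestor--descendant relation and is intrinsically defined on unordered forests. Your global-pool greedy rule lacks exactly this commutativity.
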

By going into the details of the proof of this result, one may recover the Simion--Schmidt bijection by first applying the proof to the special case when the forest is a path, and then taking the complement; as we will see, taking complements preserves forest-Wilf equivalences as well as Wilf equivalences. This theorem happens to also be a special case of a result proved by Hopkins and Weiler for posets. In this paper, we derive (fixed-dimensional) recurrences for the number of forests avoiding various sets of length-$3$ patterns, including the single patterns $123$ and $213$.

Two sets of patterns are \textit{forest-structure-Wilf equivalent} if for any unlabeled forest structure, the number of labelings avoiding the first set equals the number of labelings avoiding the second set. This is a stronger form of forest-Wilf equivalence. We generalize \cref{thm:anders-archer} to longer patterns, in the vein of West, proving the following:
\begin{theorem}\label{thm:westbijection}
For $k\ge 3$ and $\tau\in\mathcal{S}_k$ such that $\tau(k-1)=k-1$ and $\tau(k)=k$, define $\widetilde\tau\in\mathcal{S}_k$ by letting $\widetilde\tau(i)=\tau(i)$ for $1\le i\le k-2$, $\widetilde\tau(k-1)=k$, and $\widetilde\tau(k)=k-1$. Given a fixed rooted forest, there exists a bijection between labelings avoiding $\tau$ and labelings avoiding $\widetilde\tau$. Thus, the patterns $\tau$ and $\widetilde\tau$ are forest-structure-Wilf equivalent, and therefore forest-Wilf equivalent.
\end{theorem}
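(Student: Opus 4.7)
The plan is to construct an explicit bijection $\Phi$ between $\tau$-avoiding and $\widetilde\tau$-avoiding labelings of a fixed rooted forest $F$, generalizing West's original bijection for permutations. The starting point is the observation that since $\tau$ and $\widetilde\tau$ agree on positions $1,\ldots,k-2$ and differ only in the last two (ending in $(k-1)k$ for $\tau$ versus $k(k-1)$ for $\widetilde\tau$), every occurrence of either pattern on a root-to-vertex path of $F$ decomposes as a $\sigma$-shaped ``base'' of $k-2$ ancestors together with a pair $(w,x)$ further down the path, where $w$ is a proper ancestor of $x$ and both $L(w),L(x)$ exceed the base labels. Call such a pair \emph{$\sigma$-supported}. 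A labeling $L$ contains $\tau$ iff some $\sigma$-supported pair in $L$ satisfies $L(w)<L(x)$, and contains $\widetilde\tau$ iff some $\sigma$-supported pair satisfies $L(w)>L(x)$.

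Given this reformulation, I would define $\Phi$ by an iterative swap procedure. Starting from a $\tau$-avoiding labeling $L$, scan $F$ for $\sigma$-supported pairs oriented $L(w)>L(x)$ (forbidden under $\widetilde\tau$-avoidance), select the one minimizing a canonical key (for instance, the depth of $x$, breaking ties by $L(x)$), swap the labels of $w$ and $x$, and repeat until no wrongly-oriented pair remains. The inverse $\Phi^{-1}$ applies the symmetric procedure to a $\widetilde\tau$-avoiding labeling. Alternatively, I would phrase $\Phi$ recursively on the number of vertices: strip the vertex with label $n$, apply the inductive bijection to the remaining forest (carrying along an enriched set of forbidden residual constraints to encode $n$'s role as the top element of a potential pattern), then reinsert $n$ at the canonically determined vertex.

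The main obstacle is verifying well-definedness and termination: a single swap can affect $\sigma$-supported pairs on other root-to-leaf paths of $F$ that share a prefix with the modified path, and in an unordered forest many vertices may share ancestors with the swapped ones. The key technical lemma should state that the canonical minimal swap affects only $\sigma$-supported pairs whose lower vertex is strictly deeper than $x$, yielding termination by induction on depth. This relies on the fact that the $\sigma$-base sits strictly above both $w$ and $x$, so exchanging $L(w)$ and $L(x)$ does not disturb any potential $\sigma$-base above $w$; only pairs involving $w$, $x$, or their descendants are affected, and on such pairs one can verify directly that the orientation count with the ``wrong'' sign strictly decreases. Once termination and the invariance of the $\sigma$-bases are in place, the symmetric form of the swap rule provides the inverse and shows that $\Phi$ is a bijection, establishing forest-structure-Wilf equivalence and a fortiori forest-Wilf equivalence of $\tau$ and $\widetilde\tau$.
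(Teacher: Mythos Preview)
Your approach is genuinely different from the paper's, and as written it has a real gap.

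The paper does not swap individual pairs of labels. Instead it defines $\bar\tau\in\mathcal{S}_{k-1}$ (the common length-$(k-1)$ prefix of $\tau$ and $\widetilde\tau$), calls a vertex \emph{special} if it is the endpoint of a $\bar\tau$-instance, and introduces \emph{shuffle} and \emph{antishuffle} operations at a special vertex $v$: these relabel the \emph{entire subtree} rooted at $v$, moving a single distinguished label to $v$ and pushing the remaining labels onto the strict descendants while preserving their relative order. The crucial lemmas show that a shuffle or antishuffle leaves the set of special vertices unchanged and fixes every non-special label. The bijections $\alpha$ and $\beta$ then apply shuffles (resp.\ antishuffles) to all special vertices in reverse (resp.\ forward) BFS order, and the invariance of the special set makes it straightforward to check that they are inverse to one another. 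The subtree-relabeling is exactly what makes the argument go through on a branching forest: it handles all descendants of $v$ at once, so no side branch can be left in an inconsistent state.

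Your swap proposal, by contrast, touches only the two labels $L(w)$ and $L(x)$. The difficulty you flag is real and not resolved by the lemma you state. Changing $L(w)$ alters every $\sigma$-base that passes through $w$, and such a base can support a pair $(w',x')$ with $w'$ a child of $w$ on a branch disjoint from $x$; there is no reason $x'$ must be strictly deeper than $x$. So the claim that the minimal swap ``affects only $\sigma$-supported pairs whose lower vertex is strictly deeper than $x$'' is false as stated, and without it neither termination nor invertibility follows. Your selection rule is also underspecified (two bad pairs can share the same lower vertex $x$, so depth of $x$ and $L(x)$ together do not break all ties). An iterative-swap strategy \emph{can} be made to work---the paper itself uses one in the forest-Young-diagram setting of Section~5 to prove $I_2$ and $J_2$ are forest-shape-Wilf equivalent---but the correct selection rule there is delicate (highest lower cell, then youngest upper cell) and the verification requires several careful case analyses. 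Your sketch does not yet supply the analogous ingredients.
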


We also define a new object called the \textit{forest-Young diagram}, and introduce a notion of Wilf equivalence for this object, called \textit{forest-shape-Wilf equivalence}, analogous to the shape-Wilf equivalence for Young diagrams described in \cite{bwx}. We prove the following theorem, which generalizes further to forest-Young diagrams and sets of patterns:

\begin{theorem}\label{thm:fswe}
Let $m$ be a positive integer. For each integer $i$ with $1\le i\le m$, let $\tau_i\in\mathcal{S}_{k_i}$ be a pattern such that $\tau_i(k_i-1)=k_i-1$ and $\tau_i(k_i)=k_i$. For each such $i$, define $\widetilde\tau_i\in\mathcal{S}_{k_i}$ by letting $\widetilde\tau_i(j)=\tau_i(j)$ for $1\le j\le k_i-2$, $\widetilde\tau_i(k_i-1)=k_i$, and $\widetilde\tau_i(k_i)=k_i-1$. Then, the sets $\{\tau_1,\dots,\tau_m\}$ and $\{\widetilde\tau_1,\dots,\widetilde\tau_m\}$ are forest-shape-Wilf equivalent.
\end{theorem}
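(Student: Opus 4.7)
The plan is to construct a single bijection $\Phi$ on labelings of any forest-Young diagram that simultaneously converts $\{\tau_1,\dots,\tau_m\}$-avoiding labelings into $\{\widetilde\tau_1,\dots,\widetilde\tau_m\}$-avoiding ones. This parallels the proof of multi-pattern shape-Wilf equivalence in \cite{bwx}: the key uniformity that makes one bijection work for all patterns at once is that every pair $(\tau_i,\widetilde\tau_i)$ differs only by transposing the two largest entries, so a single local swap can be arranged to service all patterns simultaneously.

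First, I would abstract the bijection underlying \cref{thm:westbijection} so that its action depends only on the pair of top values $(k_i-1,k_i)$ and is oblivious to the specific prefix $\sigma\in\mathcal{S}_{k_i-2}$ of each pattern. Because a forest-Young diagram should decompose into root-to-vertex chains analogous to those in a rooted forest, the resulting swap operation can then be applied along each such chain, with consistency at shared vertices to be verified directly from the definition. This yields a candidate $\Phi$ at the diagram level that reduces to the bijection of \cref{thm:westbijection} in the single-pattern, rooted-forest case.

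The main obstacle is showing that a single application of $\Phi$ neither fails to remove some $\tau_i$-occurrence nor creates a new $\widetilde\tau_j$-occurrence for some index $j$. Following BWX, I would process occurrences in a canonical order, for instance lexicographic minimality on the positions of their two topmost entries, and then argue by case analysis that eliminating a minimal $\tau_i$-occurrence cannot produce a forbidden configuration. The hypothesis that $\tau_i$ and $\widetilde\tau_i$ agree on their first $k_i-2$ entries is essential here, since any newly created occurrence would have to involve both swapped entries, and minimality restricts this strongly enough to reach a contradiction. The case analysis is most delicate when the patterns have different lengths $k_i$, as the windows affected by the swap vary across $i$, but once this is settled bijectivity of $\Phi$ follows immediately from the symmetric construction obtained by exchanging the roles of $\tau_i$ and $\widetilde\tau_i$, giving the desired forest-shape-Wilf equivalence.
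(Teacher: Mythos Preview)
Your plan diverges from the paper's argument and, as written, has a real gap.

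The paper does \emph{not} lift the shuffle/antishuffle bijection of \cref{thm:westbijection} to forest-Young diagrams. Instead it follows the two-step BWX strategy at the level of permutation matrices: first (\cref{prop:i2j2}) it proves directly that $I_2$ and $J_2$ are forest-shape-Wilf equivalent by iterating a local swap $\phi$ (resp.\ $\psi$) that replaces a carefully chosen instance of $I_2$ by $J_2$ (resp.\ vice versa); second (\cref{prop:blocks}) it shows a general block lemma: if $C$ and $D$ are forest-shape-Wilf equivalent, then $\{[\begin{smallmatrix}0&C\\A_i&0\end{smallmatrix}]\}_i$ and $\{[\begin{smallmatrix}0&D\\A_i&0\end{smallmatrix}]\}_i$ are as well. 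The crux of \cref{prop:blocks} is the $(\mathcal{A},L)$-coloring: one colors a cell white if some $A_i$-instance sits entirely older and lower, extracts from the white cells a genuine sub--forest-Young diagram $Y_{\mathcal{A},L}$ with its own transversal, applies the $C\leftrightarrow D$ bijection only there, and then checks (\cref{lem:L-f(L)-same-coloring}) that the coloring is unchanged. Theorem~\ref{thm:fswe} is then immediate with $C=J_2$, $D=I_2$.

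Your proposal instead tries to act on occurrences of the full patterns $\tau_i$ directly, borrowing the bijection from \cref{thm:westbijection}. The problem is that that bijection permutes \emph{labels} on a fixed forest; it has no analogue on a forest-Young diagram, where a transversal is constrained by rows as well as columns and where ``label of $v$'' is replaced by ``the unique row $r$ with $(r,v)$ marked $1$,'' which need not even lie in the column of an ancestor. The sentence ``a forest-Young diagram should decompose into root-to-vertex chains'' is where the plan breaks: there is no such decomposition compatible with the row constraint, and the shuffle operation (which reassigns an entire interval of labels within a subtree) does not translate to moving $1$'s among cells of $Y$. What makes the paper's argument go through is precisely that the coloring step isolates a sub--forest-Young diagram on which one only needs the $I_2\leftrightarrow J_2$ swap, and the invariance of the coloring replaces the delicate multi-pattern case analysis you anticipate. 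Without that device, your ``minimality plus case analysis'' step is not just delicate but unspecified: you would need to show that swapping the two top cells of a minimal $\tau_i$-instance preserves transversality, creates no $\widetilde\tau_j$, and that the process terminates and is invertible, none of which is addressed.
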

As we will see, this result implies that the sets $\{\tau_1,\dots,\tau_m\}$ and $\{\widetilde\tau_1,\dots,\widetilde\tau_m\}$ are forest-structure-Wilf equivalent, so it also generalizes \cref{thm:westbijection}.

Finally, we study consecutive pattern avoidance in rooted forests. In the context of permutations, a \textit{consecutive instance} of a pattern $\sigma$ in a permutation $\pi$ is a consecutive subsequence of $\pi$ whose elements are in the same relative order as $\sigma$. If no such subsequence exists, then $\pi$ is said to \textit{avoid} $\sigma$ (as a consecutive pattern). The analogous notion of Wilf equivalence is known as \textit{c-Wilf equivalence}. Two patterns $\sigma$ and $\tau$ are called \textit{strong-c-Wilf equivalent} if for all $n$ and $m$, the number of length-$n$ permutations with exactly $m$ consecutive instances of $\sigma$ equals the number of length-$n$ permutations with exactly $m$ consecutive instances of $\tau$. Elizalde and Noy used the cluster method in \cite{elizaldenoy2}, introduced by Goulden and Jackson in 1979 \cite{gouldenjackson}, to study consecutive pattern avoidance in permutations and c-Wilf equivalence. Elizalde and Dwyer conjectured the following in \cite{dwyer}, which was proved by Lee and Sah in \cite{leesah}.

\begin{theorem}[{\cite[Corollary~1.2]{leesah}}]
If the patterns $\sigma,\tau\in\mathcal{S}_k$ are strong-c-Wilf equivalent, then $\{\sigma(1), \sigma(k)\}=\{\tau(1), \tau(k)\}$ or $\{k+1-\sigma(1), k+1-\sigma(k)\}=\{\tau(1), \tau(k)\}$.
\end{theorem}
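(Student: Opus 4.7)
The plan is to follow the cluster-method framework of Goulden--Jackson, specialized to permutations by Elizalde--Noy, and then to carry out the delicate cluster-polynomial analysis of Lee--Sah. Given a pattern $\sigma \in \mathcal{S}_k$, define the cluster generating series
\[
\omega_\sigma(z,t) = \sum_{n,m\ge 0} \omega_{n,m}(\sigma)\,\frac{z^n}{n!}\,(t-1)^m,
\]
where $\omega_{n,m}(\sigma)$ counts $\sigma$-clusters of length $n$ with $m$ marked consecutive occurrences of $\sigma$, i.e.\ sequences of occurrences each overlapping its neighbor and together covering the underlying permutation. By the Goulden--Jackson transfer theorem, the bivariate EGF $\sum_{n,m} a_{n,m}(\sigma)\, z^n t^m / n!$, where $a_{n,m}(\sigma)$ is the number of $\pi\in\mathcal{S}_n$ with exactly $m$ consecutive occurrences of $\sigma$, is an invertible function of $\omega_\sigma$. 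Hence $\sigma$ and $\tau$ are strong-c-Wilf equivalent if and only if $\omega_\sigma = \omega_\tau$ as formal power series.

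Next I would stratify clusters by their overlap sequence. Let $\mathrm{OV}(\sigma) \subseteq \{1,\dots,k-1\}$ be the set of integers $i$ for which the length-$i$ prefix and length-$i$ suffix of $\sigma$ are order-isomorphic. An $m$-cluster is determined by choosing an overlap sequence $(i_1,\dots,i_{m-1}) \in \mathrm{OV}(\sigma)^{m-1}$ together with a linear extension of the resulting poset of constraints. The ``maximal'' overlaps $i_j = k-1$ (which always lie in $\mathrm{OV}(\sigma)$) produce clusters of length $k+m-1$ whose count, as a function of $m$, is a polynomial whose structure is transparently governed by the pair $(\sigma(1),\sigma(k))$: these values specify where the free leftmost and rightmost entries sit within their respective $k$-windows, so by a direct combinatorial argument the leading behavior encodes the multiset $\{\sigma(1),\sigma(k)\}$. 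Crucially, the complementation symmetry $\sigma \mapsto \sigma^c$ with $\sigma^c(i) = k+1-\sigma(i)$ preserves cluster structure, so the series $\omega_\sigma$ determines $\{\sigma(1),\sigma(k)\}$ only up to the involution $x \mapsto k+1-x$.

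The main obstacle is the extraction step: isolating, from the full cluster series, the information about $\{\sigma(1),\sigma(k)\}$ in the presence of smaller overlaps $i_j < k-1$, whose contributions a priori mix with the all-maximal-overlap terms. The Lee--Sah technique handles this by an inductive inclusion--exclusion argument on $|\mathrm{OV}(\sigma)|$, combined with an asymptotic or generating-function marginalization that effectively separates the $i_j = k-1$ contributions from the rest, and then a case analysis showing that the separated data pins down $\{\sigma(1),\sigma(k)\}$ up to complementation. Carrying out this separation rigorously---verifying that patterns with unusually rich overlap sets do not generate unforeseen cancellations that destroy the extreme-value information---is the technical heart of the argument and the step I expect to be hardest to execute in full detail.
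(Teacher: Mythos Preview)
The paper does not prove this theorem. It is stated in the introduction with the attribution \cite[Corollary~1.2]{leesah} and no proof is given; it serves only as motivation for the paper's own analogous result about forests (\cref{thm:same-first-number}). So there is no proof in the paper against which to compare your proposal.

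That said, your sketch contains a genuine error that would derail the plan as written. You assert that the overlap value $k-1$ ``always lies in $\mathrm{OV}(\sigma)$,'' i.e.\ that every pattern admits two occurrences overlapping in $k-1$ positions. This is false: an overlap of size $k-1$ requires $\sigma(1)\cdots\sigma(k-1)$ and $\sigma(2)\cdots\sigma(k)$ to be order-isomorphic, which fails for most patterns (e.g.\ $\sigma=132$ has prefix $13\sim 12$ and suffix $32\sim 21$). What is always available is the overlap of size $1$ (shift by $k-1$), since any two single entries are order-isomorphic; the corresponding $m$-clusters have length $k+(m-1)(k-1)$, not $k+m-1$. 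The Lee--Sah argument indeed works with these minimal-overlap clusters, and the polynomial governing their count is where the pair $\{\sigma(1),\sigma(k)\}$ is read off. Your high-level architecture---cluster method, reduction to cluster numbers, isolation of a distinguished family of clusters whose enumeration sees only $\{\sigma(1),\sigma(k)\}$ up to complementation---is the right shape, but the specific family you named does not exist in general, and the length computation that follows is correspondingly off.
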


We call the analogous notions of forest-Wilf equivalence for consecutive pattern avoidance \textit{c-forest-Wilf equivalence} and \textit{strong-c-forest-Wilf equivalence}, and prove the following statement constraining such equivalences, a result analogous to that obtained by Lee and Sah:

\begin{theorem}\label{thm:same-first-number}
If patterns $\sigma,\tau\in\mathcal{S}_k$ are strong-c-forest-Wilf equivalent, then $\sigma(1)=\tau(1)$ or $\sigma(1)+\tau(1)=k+1$.
\end{theorem}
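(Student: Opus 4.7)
The plan is to apply the forest cluster method developed earlier in the paper to extract a constraint on $\sigma(1)$. By that method, strong-c-forest-Wilf equivalence of $\sigma$ and $\tau$ is equivalent to equality of their forest cluster polynomials. The key cluster I will use is the \emph{branching cluster} $T_k$: a rooted tree on $2k - 1$ vertices consisting of a root $v$ together with two children of $v$, each beginning a descending path of $k - 1$ additional vertices. The only length-$k$ descending paths in $T_k$ are the two root-to-leaf paths, so the maximum number of occurrences of $\sigma$ in any labeling of $T_k$ is $2$, attained exactly when both root-to-leaf paths form the pattern $\sigma$.

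For such a labeling of $T_k$ with labels $\{1, \ldots, 2k-1\}$, the root $v$ lies at position $1$ of each occurrence and must have rank $\sigma(1)$ within each length-$k$ path. Thus $2(\sigma(1) - 1)$ labels are smaller than the label at $v$, forcing that label to be $2\sigma(1) - 1$. On each path, the $\sigma(1) - 1$ smaller labels must come from $\{1, \ldots, 2\sigma(1) - 2\}$ and the $k - \sigma(1)$ larger labels from $\{2\sigma(1), \ldots, 2k-1\}$, with the arrangement on each path then forced by $\sigma$. Setting
\[
f(a, k) := \binom{2(a-1)}{a-1}\binom{2(k-a)}{k-a},
\]
the number of such labelings with distinguished ordered branches (so that the two occurrences are marked) is exactly $f(\sigma(1), k)$, which is the $T_k$-shape contribution to the forest cluster polynomial.

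The main obstacle is to show that the cluster polynomial equality implied by strong-c-forest-Wilf equivalence refines to a cluster-shape-by-shape equality, so that the $T_k$-contribution matches between $\sigma$ and $\tau$; this requires care because other cluster shapes of size $2k-1$, such as linear clusters on a path and ``side-branching'' clusters in which one occurrence begins at an internal vertex of the other, also contribute at this size. One route is to formulate the cluster generating function as a sum indexed by cluster shape, exploiting that the forest cluster method naturally tracks the underlying tree structure; another is to solve a system of identities obtained by varying the forest size and number of marked occurrences so as to isolate the $T_k$-contribution. Either way, the upshot is $f(\sigma(1), k) = f(\tau(1), k)$. One then concludes by computing
\[
\frac{f(a, k)}{f(a+1, k)} = \frac{a(2k-2a-1)}{(2a-1)(k-a)},
\]
which exceeds $1$ for $a < k/2$ and is less than $1$ for $a > k/2$, with equality exactly when $k = 2a$. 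Combined with the symmetry $f(a, k) = f(k+1-a, k)$, this shows that $f(\cdot, k)$ is strictly unimodal on $\{1, \ldots, k\}$ with equalities occurring only between complementary indices; hence $f(\sigma(1), k) = f(\tau(1), k)$ forces $\sigma(1) = \tau(1)$ or $\sigma(1) + \tau(1) = k + 1$.
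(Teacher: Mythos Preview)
Your overall strategy---use the forest cluster method and extract information from $2$-clusters of size $2k-1$---is exactly the right one, and your function $f(a,k)=\binom{2a-2}{a-1}\binom{2k-2a}{k-a}$ together with its unimodality analysis is correct and is precisely what is needed at the end. The gap is the step you yourself flag as ``the main obstacle'': you do not actually show that strong-c-forest-Wilf equivalence forces equality of the $T_k$-shape contribution, and in fact the shape-by-shape refinement you propose is \emph{false}. The paper later proves that $1324$ and $1423$ are strong-c-forest-Wilf equivalent; for these patterns one can check directly that the number of linear $2$-clusters on $7$ vertices (a path) is $1$ for $1324$ but $4$ for $1423$, so cluster counts do not match shape by shape even though all $r_{n,m}$ agree. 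Neither of your suggested routes repairs this: the cluster method yields only the aggregate numbers $r_{n,m}$, not a shape-indexed refinement, and at the single data point $(n,m)=(2k-1,2)$ there is no ``system'' to vary.

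The fix is not to isolate the $T_k$ contribution but to compute $r_{2k-1,2}$ outright and observe that its dependence on $\sigma$ enters only through $f(\sigma(1),k)$. Concretely, count \emph{ordered} pairs $(A,B)$ of length-$k$ sequences from $\{1,\dots,2k-1\}$, each in the relative order of $\sigma$, sharing exactly one entry which is the first entry of $B$. Choosing the $k$ entries of $B$ determines everything, so there are $\binom{2k-1}{k}$ such pairs, \emph{independently of $\sigma$}. Each $2$-cluster of size $2k-1$ arises once from such a pair, except that the branching clusters (your $T_k$) are counted twice because either branch can play the role of $B$; hence
\[
r_{2k-1,2}=\binom{2k-1}{k}-\tfrac{1}{2}f(\sigma(1),k).
\]
Now equality of the single cluster number $r_{2k-1,2}$, which follows from Theorem~\ref{thm:same-cluster-numbers}, immediately gives $f(\sigma(1),k)=f(\tau(1),k)$, and your unimodality argument finishes the proof.
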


Using the cluster method, we also prove the following surprising result, which gives the only nontrivial single-pattern c-forest-Wilf equivalence for pattern length at most $5$:

\begin{theorem}\label{thm:1324-1423}
The patterns $1324$ and $1423$ are strong-c-forest-Wilf equivalent, and therefore c-forest-Wilf equivalent.
\end{theorem}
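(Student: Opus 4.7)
I would apply the forest version of the Goulden--Jackson cluster method developed earlier in the paper. In that framework, the joint generating function of rooted forests by size and number of consecutive occurrences of a length-$4$ pattern $\sigma$ is determined by the associated cluster polynomial $C_\sigma$, so proving strong-c-forest-Wilf equivalence reduces to showing $C_{1324}=C_{1423}$ as formal power series (and then c-forest-Wilf equivalence follows immediately by specialization).

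The first step is to classify self-overlaps of the two patterns. Writing $\sigma=\sigma_1\sigma_2\sigma_3\sigma_4$, a self-overlap of length $j\in\{1,2,3\}$ requires $\sigma_{4-j+1}\cdots\sigma_4$ to be order-isomorphic to $\sigma_1\cdots\sigma_j$. A direct check shows that for both $\sigma=1324$ and $\sigma=1423$, the admissible overlap lengths are exactly $1$ and $2$; the length-$3$ overlaps fail because $324$ is not order-isomorphic to $132$ and $423$ is not order-isomorphic to $142$, and in each case the length-$2$ overlap has relative pattern $12$. This qualitative agreement between the two overlap structures is the first necessary symptom of the desired equivalence.

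Next I would enumerate, for each $m\ge 1$ and each sequence of overlap sizes $(\ell_1,\dots,\ell_{m-1})\in\{1,2\}^{m-1}$, the clusters built from $m$ stacked instances with those overlaps. Each cluster corresponds to a linear extension of the poset of stacked inequalities on its $4+\sum_i(4-\ell_i)$ vertices. For $1423$, the stacked inequalities tend to chain into totally ordered families --- for example, with all overlaps equal to $2$ the constraints collapse to a single chain $a_1<a_3<a_5<a_6<a_4<a_2$ in the $m=2$ case --- so the cluster counts are small. For $1324$, the analogous poset is branched (the $m=2$, overlap-$2$ case already yields two linear extensions, namely $132546$ and $142536$), so the cluster counts are strictly larger. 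In particular, the cluster polynomials do \emph{not} agree in the permutation setting, consistent with Lee--Sah's constraint.

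The crux of the proof, and the main obstacle, is to show that once the cluster contributions are combined with the forest-specific weights appearing in the cluster method, the per-overlap-sequence discrepancies cancel. I would attempt this by constructing a weight-preserving bijection between length-$\ell$ weighted cluster configurations of $1324$ and those of $1423$ for each $\ell$, or alternatively by extracting closed-form generating functions for $C_{1324}$ and $C_{1423}$ via the transfer-matrix structure imposed by the overlap set $\{1,2\}$ and verifying the identity algebraically. The essential forest-specific feature must be that the locations at which a cluster can receive external subtrees depend only on the overlap \emph{pattern} between adjacent instances --- which is $12$ for both $1324$ and $1423$ --- and not on the finer internal order-isomorphism class of the pattern itself. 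Making this precise, and checking that it suffices to force the cancellation in spite of the Lee--Sah obstruction that operates for permutations, is the hard step. Once $C_{1324}=C_{1423}$ is established, strong-c-forest-Wilf equivalence is immediate.
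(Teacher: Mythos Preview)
Your proposal has a genuine gap: it is a plan rather than a proof, and the plan itself rests on a misconception about what forest clusters are. You analyze self-overlaps of $1324$ and $1423$ as if clusters were linear stacks of instances indexed by an overlap sequence $(\ell_1,\dots,\ell_{m-1})\in\{1,2\}^{m-1}$. But a \emph{forest cluster} is a rooted tree, not a word: two highlighted instances may share only their starting vertex, or the first two vertices, or the first three, in addition to the tail-to-head overlaps you list (see the seven overlap types in the paper). There is no ``forest-specific weight'' attached to a permutation cluster; the forest cluster numbers $r_{n,m}$ count genuinely tree-shaped objects, and the transfer-matrix/linear-extension picture you describe does not apply. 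Your own sentence ``Making this precise \ldots\ is the hard step'' is an accurate self-assessment: the proposal does not contain the key idea.

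For comparison, the paper's proof does begin, as you do, by reducing via \cref{thm:same-cluster-numbers} to showing the forest cluster numbers coincide. But the mechanism is completely different from any cancellation of per-overlap discrepancies. The paper introduces auxiliary objects called \emph{$\sigma$-nice trees} (trees in which even-depth vertices are endpoints of consecutive instances of $\sigma$), decomposes any forest cluster $X$ into a maximal $\sigma$-nice subtree $T_{\max}(X)$ together with smaller forest clusters hanging off it, and thereby obtains a recurrence for $r_{n,m}$ whose only $\sigma$-dependent ingredient is a refined count $B_\sigma(n,m)$ of $\sigma$-nice trees. The equality $B_{1423}=B_{1324}$ is then proved by decomposing a $\sigma$-nice tree into a collection of proper ``twigs'' and constructing an explicit involution $\gamma$ on twig collections satisfying $T_{1423}(W)=T_{1324}(\gamma(W))$. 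None of this structure --- the $\sigma$-nice decomposition, the twig involution, or the resulting recurrence --- is anticipated by your outline, and the speculative reason you offer for the equivalence (that attachment locations depend only on the length-$2$ overlap pattern $12$) is not what drives the argument.
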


The patterns $1324$ and $1423$ are not even c-Wilf equivalent (in the context of consecutive pattern avoidance in permutations) \cite[Table~3]{elizaldenoy1}, and therefore no bijection between $n$-vertex forests avoiding $1324$ and $n$-vertex forests avoiding $1423$ could be structure-preserving. This suggests that the study of consecutive pattern avoidance in forests is more subtle than a mere special case of the study in permutations.

We first give preliminary definitions in \cref{sec:preliminaries}. In \cref{sec:recurrences}, we provide recurrences for enumerating the forests avoiding the patterns $123\cdots k$, $213$, and for avoiding various other sets of patterns. The primary focus of \cref{sec:forest-wilf-equivalences} is to prove \cref{thm:westbijection}, generalizing the result by Anders and Archer that the patterns $123$ and $132$ are forest-Wilf equivalent to single patterns of any length. In \cref{sec:forest-shape-wilf-equivalences}, we then introduce the notion of forest-shape-Wilf equivalence to prove \cref{thm:fswe}, generalizing \cref{thm:westbijection} to sets of multiple patterns of arbitrary length. Finally, in \cref{sec:consecutive}, we discuss consecutive pattern avoidance in forests as defined in \cite{andersarcher}, and prove \cref{thm:same-first-number,thm:1324-1423}.

\section{Preliminaries}\label{sec:preliminaries}
Let $\mathcal{S}_n$ be the set of permutations on $[n]\coloneqq\{1, 2, \dots, n\}$. A permutation $\pi=\pi(1)\pi(2)\cdots\pi(n)\in\mathcal{S}_n$ \textit{contains} a pattern $\sigma=\sigma(1)\sigma(2)\cdots\sigma(k) \in\mathcal{S}_k$ if there is a sequence $1 \le a_1 < a_2 < \cdots < a_k \le n$ such that $\pi(a_1)\pi(a_2)\cdots\pi(a_k)$ is in the same relative order as $\sigma(1)\sigma(2)\cdots\sigma(k)$. We generally write a pattern as a permutation of positive integers from $1$ to $k$, where $k$ is the length of the pattern; for example, $213$ is a valid length-$3$ pattern. We can generalize this notion to arbitrary sequences of distinct positive integers: a sequence $\pi$ \textit{contains} a pattern $\sigma$ if it has a subsequence with integers in the same relative order as $\sigma$. Otherwise, $\pi$ \textit{avoids} $\sigma$.

In a rooted tree, we define the \textit{parent} of a non-root vertex $v$ to be the vertex directly preceding it in the shortest path from the root to $v$. Any non-root vertex is a \textit{child} of its parent. The \textit{ancestors} of a vertex are the vertices on the path from the root to the vertex, including itself, and a vertex is a \textit{descendant} of each of its ancestors. The set of all descendants of a vertex $v$, including $v$ itself, form the \textit{subtree rooted at $v$}, which can be considered as a rooted tree with root $v$. A vertex $v'$ is a \textit{strict ancestor} of a vertex $v$ if $v'$ is an ancestor of $v$, and $v'\neq v$. We define \textit{strict descendants} similarly. We define the \textit{depth} of a vertex in a rooted tree inductively; the depth of the root equals $1$, and each other vertex has depth equal to one greater than the depth of its parent. The \textit{depth} of a rooted tree is defined to be the maximum depth of its vertices.

We use the natural convention that the empty graph is a rooted forest, but not a rooted tree; in other words, rooted trees must contain at least one vertex.

The labels of a labeled forest or labeled tree are always assumed to be positive integers. Unless otherwise specified, the labels are also assumed to be distinct. The \textit{label set} of a rooted labeled forest $F$ (resp.\ tree $T$) is the set of labels of the vertices of $F$ (resp.\ $T$). Given a finite set $L$ of positive integers, by a \textit{forest on $L$} (resp. \textit{tree on $L$}), we mean a rooted labeled forest (resp.\ tree) with label set $L$; note this implies the forest (resp.\ tree) has $|L|$ vertices. In particular, a forest or tree on $[n]$ has exactly $n$ vertices, which are labeled $1$ through $n$.

A rooted labeled forest is \textit{increasing} if the label of each vertex is less than the labels of all its children, and \textit{decreasing} if the label of each vertex is greater than the labels of all its children.

\begin{figure}
\includegraphics{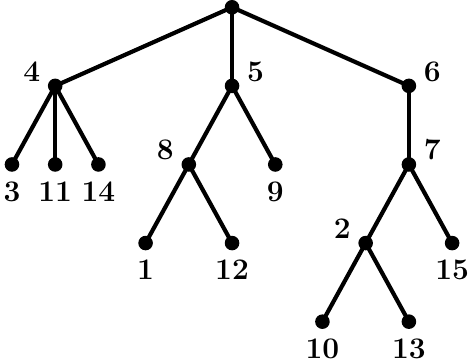}
\caption{An example of a forest on $[15]$. This forest consists of three trees, with roots labeled $4$, $5$, and $6$.}
\label{fig:forestexample}
\end{figure}

For convenience, when drawing rooted forests we often connect the root of each constituent rooted tree to an additional unlabeled vertex. However, this vertex is not part of the forest and only is there to aid in visualization. As in \cref{fig:forestexample}, we visualize each rooted tree as having its root at the top, and arranged so that each parent is placed above its children. So, the vertices strictly below a given vertex (and in its subtree) are the strict descendants of that vertex. Furthermore, in each rooted tree, the shortest path from the root to any of its descendants forms a downward path.

For a positive integer $k$, an \textit{instance} of a pattern $\sigma=\sigma(1)\cdots\sigma(k)\in\mathcal{S}_k$ in an unordered rooted labeled forest $F$ is a sequence $v_1,\dots,v_k$ of vertices of $F$ such that $v_i$ is an ancestor of $v_j$ for all $1\le i<j\le k$ and the labels of $v_1,\dots,v_k$ are in the same relative order as $\sigma(1)\cdots\sigma(k)$. We say that $v_1$ is the \textit{starting point} of this instance, and that $v_k$ is its \textit{endpoint}.

If there is at least one instance of $\sigma$ in $F$, we say $F$ \textit{contains} $\sigma$. Otherwise, we say $F$ \textit{avoids} $\sigma$. Note that a rooted tree avoids $\sigma$ if and only if along every downward path from the root to a vertex, the sequence of labels obtained avoids $\sigma$. Furthermore, a forest of rooted trees avoids $\sigma$ if and only if each of its trees avoids $\sigma$. For instance, the forest shown in \cref{fig:forestexample} avoids the pattern $132$. Given a set $S=\{\sigma_1,\dots,\sigma_m\}$ of patterns, we say a forest or tree \textit{avoids} $S$ if it avoids each of $\sigma_1,\dots,\sigma_m$. For convenience, we will often refer to the singleton set $\{\sigma\}$ as simply $\sigma$.

For a nonnegative integer $n$, let $F_n$ be the set of unordered rooted labeled forests on $[n]$, and let $T_n$ be the set of unordered rooted labeled trees on $[n]$. For a set of patterns $S$, let $F_n(S)$ be the set of forests in $F_n$ avoiding $S$, and let $f_n(S)=|F_n(S)|$. Similarly, let $T_n(S)$ be the set of trees in $T_n$ avoiding $S$, and let $t_n(S)=|T_n(S)|$. In particular, we have $f_0(S)=1$ and $t_0(S)=0$ for all $S$ (note $T_0$ is empty). Also, for notational convenience, we often write $F_n(\sigma_1,\dots,\sigma_m)$ to mean $F_n(\{\sigma_1,\dots,\sigma_m\})$, and similarly for $T_n$, $f_n$, and $t_n$.

Two sets of patterns $S$ and $S'$ are \textit{Wilf equivalent} if for all $n$, the number of permutations in $\mathcal{S}_n$ avoiding $S$ is the same as the number of permutations in $\mathcal{S}_n$ avoiding $S'$. We say $S$ and $S'$ are \textit{forest-Wilf equivalent} if for all $n$ we have $f_n(S)=f_n(S')$.

For a pattern $\sigma \in\mathcal{S}_k$, its \textit{complement} $\sigma^c\in\mathcal{S}_k$ is formed by defining $\sigma^c(i)=k+1-\sigma(i)$ for all $1\le i\le k$. If a forest on $[n]$ avoids $\sigma$, then by replacing each vertex $i$ with $n+1-i$, the resulting forest avoids $\sigma^c$. For a set of patterns $S=\{\sigma_1,\dots,\sigma_m\}$, let $S^c$ denote the corresponding set of complements $\{\sigma_1^c, \dots, \sigma_m^c\}$. Then $f_n(S)=f_n(S^c)$ for all $n$, so $S$ and $S^c$ are forest-Wilf equivalent. This fact is also Proposition 1 in \cite{andersarcher}.

For example, as briefly mentioned in the introduction, increasing forests are exactly those forests that avoid $21$; similarly, decreasing forests are exactly those forests that avoid $12$. Since the patterns $12$ and $21$ are complements, they are forest-Wilf equivalent. The expression $f_n(21)$ counts the number of increasing forests on $[n]$, and $f_n(12)$ counts the number of decreasing forests on $[n]$; it is well-known that $f_n(21)=f_n(12)=n!$, which can be proven using induction or with a bijection \cite{andersarcher}.

Two sets of patterns $S, S'$ are \textit{forest-structure-Wilf equivalent} if for any fixed rooted forest, the number of labelings of the forest that avoid $S$ equals the number of labelings that avoid $S'$. Note that this notion is stronger than forest-Wilf equivalence.

\section{Recurrences}\label{sec:recurrences}
In \cite{andersarcher}, Anders and Archer enumerated $f_n(S)$ for $S=\{213, 312\}, \{213, 312, 321\}$, $\{213, 312, 123\}$, $\{213, 312, 132\}, \{213, 132, 321\}, \{321, 2143, 3142\},$ and their complements. They also provided a recurrence to calculate $f_n(\{213, 312, 321\})$. In this section we detail how to find recurrences for counting forests avoiding certain sets of patterns. We can always count both the number of rooted trees and the number of forests avoiding a set of patterns, and we get recurrences between the two. For convenience, in this section we identify a vertex with its label. For example, for vertices $v$ and $w$, we say $v<w$ if $v$ has a smaller label than $w$ does.

\subsection{Forests from trees}
\label{sec:treeforest}
There is a recurrence for deriving the number of forests avoiding a set of patterns $S$ from the number of trees avoiding $S$ that does not depend on $S$. This recurrence was implicitly derived by Anders and Archer in the proof of Theorem 14 from \cite{andersarcher}, where they determine a recurrence for forests avoiding $\{213, 312, 231\}$.

Consider a property on trees and forests such that the following are true:
\begin{itemize}
    \item The property holds for a forest $F$ if and only if it holds for each constituent tree of $F$.
    \item If the property holds for a tree $T$, it holds for any tree $T'$ with the same underlying tree structure as $T$ and vertices labeled in the same relative order.
\end{itemize}
For example, the property can be avoiding a set of patterns $S$. We use the notation $T(n)$ (resp.\ $F(n)$) to denote the number of rooted trees (resp.\ forests) on $[n]$ satisfying this condition, where as usual $T(0)=0$ and $F(0)=1$. Informally, we call such a forest (resp.\ tree) valid. Consider a valid forest on $[n]$. If vertex $1$ is in a tree with $i-1$ other vertices, there are $\binom{n-1}{i-1}$ ways to choose these vertices, $T(i)$ ways to form a valid rooted tree on these vertices, and $F(n-i)$ ways to form a valid forest from the remaining vertices. So by summing over all possible $i$ (the number of vertices in the tree with vertex $1$), we arrive at the recurrence

\begin{equation}\label{eq:forest-from-tree}
    F(n)= \sum_{i=1}^{n} \binom{n-1}{i-1}F(n-i)T(i).
\end{equation}

Additionally, let $F(n, m)$ be the number of valid forests on $[n]$ with $m$ possible (distinguishable) pots to put the trees in, so $F(n)=F(n, 1)$. More precisely, we mean that $F(n, m)$ is the number of valid forests $F$ on $[n]$ where we assign each constituent tree of $F$ to exactly one of $m$ distinguishable pots, some of which may be empty. A given tree containing vertex $1$ can go into one of $m$ pots, so our recurrence includes a factor of $m$. Thus, similarly to the above, we obtain

\begin{equation}\label{eq:forest-from-tree-2}
F(n, m)= m \sum_{i=1}^{n} \binom{n-1}{i-1}F(n-i, m)T(i).
\end{equation}

Note that this equation directly generalizes \cref{eq:forest-from-tree}, which is the case $m=1$.

Let $f_m=\sum_{n=0}^{\infty} F(n, m)x^n/n!$ and $t=\sum_{n=0}^{\infty} T(n)x^n/n!$ be the exponential generating functions for $F(n, m)$ and $T(n)$, respectively. The recurrence for $F(n, m)$ can be written as a convolution, giving $f_m'=mf_mt'$, where $f_m', t'$ are the derivatives of $f_m$ and $t$, respectively, with respect to $x$. Therefore, we have $t=\log(f)/m$.

\subsection{Forests avoiding sets containing $213$}
In this section, we give recurrences for the number of forests avoiding sets of patterns $S$ such that $213 \in S$. We already have a recurrence for the number of forests avoiding $S$ in terms of the number of trees avoiding $S$, so in each section we find a recurrence for the number of trees avoiding $S$. In each scenario, $T(n)$, $F(n)$, and $F(n, m)$ are defined as in \cref{sec:treeforest} for a given $S$. Each recurrence involves building a forest or a tree from smaller forests and trees, and in each case it will be clear that the ``decomposition'' can be reversed uniquely.

\subsubsection{Forests avoiding $\{213, 231\}$}
Let $S=\{213, 231\}$. Consider a rooted tree on $[n]$ avoiding $S$ with root $i$. For vertices $a, b$, if $a>i$ and $b<i$, then no downward path from $i$ can contain both $a$ and $b$, because otherwise the tree contains either $213$ or $231$. Therefore, every subtree of $i$ (i.e., every subtree rooted at a child of $i$) contains vertices that either are all greater than $i$ or are all less than $i$. Ignoring the root, such a tree is simply a forest on $\{i+1, \dots, n\}$ avoiding $S$ combined with a forest on $\{1, 2, \dots, i-1\}$ avoiding $S$, so we arrive at the recurrence

\[T(n)=\sum_{i=1}^n F(i-1)F(n-i).\]

\subsubsection{Forests avoiding $\{213\}$}\label{subsubsec:213-recurrence}
Let $S=\{213\}$. Consider a rooted tree on $[n]$ avoiding $S$ with root $i$. For vertices $a < i < b$, as the tree avoids $213$, $a$ cannot be an ancestor of $b$. So, any downward path from $i$ to a vertex greater than $i$ only contains $i$ and vertices greater than $i$, meaning that the set of vertices $\{i, i+1, \dots, n\}$ forms a contiguous tree, i.e., the subgraph induced by $\{i, i+1, \dots, n\}$ is a tree rooted at $i$. Ignoring the root, this contiguous tree is a forest avoiding $S$ on the set of vertices $\{i+1, \dots, n\}$ The number of ways to form this forest is $F(n-i)$.

Given the contiguous tree on $\{i, i+1, \dots, n\}$, we need to add the vertices $1, 2, \dots, i-1$. No instance of a $213$ pattern can be formed that includes a vertex in the contiguous tree. Then, the structure made by the set of vertices $\{1, 2, \dots, i-1\}$ is a forest on $[i-1]$ avoiding $S$ with $n-i+1$ distinguishable pots to put the trees in, depending on what root vertex the tree is attached to. Specifically, these possible root vertices are the vertices $i, i+1, \dots, n$. Therefore, we arrive at the recurrence

\[T(n)=\sum_{i=1}^n F(n-i)F(i-1, n-i+1).\]

\subsubsection{Forests avoiding $\{213, 123\}$ or $\{213, 132\}$}\label{subsubsec:213-123-recurrence}
Suppose $S=\{213, 123\}$ or $S=\{213, 132\}$. Consider a rooted tree on $[n]$ avoiding $S$ with root $i$. As in \cref{subsubsec:213-recurrence}, the vertices $\{i+1, \dots, n\}$ form a contiguous tree with root $i$. If $S=\{213, 123\}$, then these vertices form a decreasing forest on $n-i$ vertices, i.e., every vertex that is not a child of $i$ is smaller than its parent. If $S=\{213, 132\}$, then these vertices form an increasing forest on $n-i$ vertices. Either way, there are $(n-i)!$ ways to create such a forest, as discussed in \cref{sec:preliminaries}.

Given the contiguous tree on $\{i, i+1, \dots, n\}$, we need to add the remaining vertices $1, 2, \dots, i-1$. In either case, no pattern in $S$ can include a vertex from the contiguous tree. Therefore, as in the previous section, we arrive at the recurrence
\[T(n)=\sum_{i=1}^n (n-i)!F(i-1, n-i+1).\]

\subsubsection{Forests avoiding $\{213, 321\}$}
Let $S=\{213, 321\}$. Further, define $F(n, m, r)$ to be the number of forests avoiding $S$ on $[n]$ with $m$ pots to put the trees in, such that exactly $r$ vertices are not the endpoint of an instance of the pattern $21$. Define $T(n, r)$ to be the number of rooted trees avoiding $S$ on $[n]$ with exactly $r$ vertices that are not a descendant of the endpoint of an instance of $21$. Note that this includes not being the endpoint of an instance of $21$, since every vertex is a descendant of itself. We have a modified recurrence for $F$ now: as in Equation \cref{eq:forest-from-tree-2}, we sum over $i$ (the number of vertices in the tree with vertex $1$), but now after splitting into a tree of size $i$ and a forest of size $n-i$, we sum over $\ell$, the number of vertices in the size-$i$ tree that are not a descendant of the endpoint of an instance of $21$. We arrive at \[F(n, m, r)= m\sum_{i=1}^{n} \binom{n-1}{i-1} \sum_{\ell=0}^i T(i, \ell)F(n-i, m, r-\ell).\]

We now derive a recurrence for $T(n, r)$. Consider a rooted tree on $[n]$ avoiding $S$ with root $i$. As in \cref{subsubsec:213-recurrence}, the vertices $i, i+1, \dots, n$ form a contiguous tree with root $i$. Note that $i$ cannot contribute to any instance of $21$ in this tree. So, the vertices $i+1, \dots, n$ form a forest on $n-i$ vertices avoiding $S$.

Given a vertex $v$ in our contiguous tree that is a descendant of the endpoint of an instance of $21$, $v$ cannot have any children in $\{1, 2, \dots, i-1\}$, as that child would be the endpoint of an instance of $321$. Then, if there are $\ell$ vertices in our contiguous tree not satisfying this criterion, we can attach each of $1, 2, \dots, i-1$ to any of those $\ell$ vertices. Note that every vertex in $\{1, 2, \dots, i-1\}$, being a descendant $i$, is an endpoint of an instance of $21$, and we end up with $\ell$ vertices that are not a descendant of the endpoint of an instance of $21$. To find $T(n, r)$, when considering trees with root $i$, we must take $\ell=r$. Since $i$ itself is not a descendant of the endpoint of an instance of $21$, there must be $r-1$ such vertices in $\{i+1, \dots, n\}$, meaning that there are $F(n-i, 1, r-1)$ ways to form a forest on those vertices. The forest on $\{1, 2, \dots, i-1\}$ is a forest with $r$ pots to put the trees in, and it must be increasing, as if $21$ occurs in the forest on $[i-1]$, then adding $i$ to the instance of $21$ creates an instance of $321$ with starting point $i$. The number of ways to create an increasing forest on $[i-1]$ with $r$ pots to put the trees in is $r(r+1)\cdots(r+i-2)=(r+i-2)!/(r-1)!$, since there are $r$ places to put $1$, then $r+1$ places to put $2$, and so on. Thus, we arrive at the recurrence

\[ T(n, r) = \sum_{i=1}^{n} F(n-i, 1, r-1)\frac{(r+i-2)!}{(r-1)!}.\]

\subsubsection{Forests avoiding $\{213, 123, 132\}$}
Let $S=\{213, 123, 132\}$. Consider a rooted tree on $[n]$ avoiding $S$ with root $i$. For vertices $a, b > i$, no downward path from $i$ can contain both $a$ and $b$, since the tree avoids $123$ and $132$. So, since the tree also avoids $213$, the vertices $i+1, i+2, \dots, n$ must all be children of $i$. The structure on the remaining vertices $1, 2, \dots, i-1$ is a forest with $n-i+1$ distinguishable pots to put the trees in. So, we arrive at the recurrence

\[T(n) = \sum_{i=1}^n F(i-1, n-i+1).\]

\subsubsection{Forests avoiding $\{213, 231, 123\}$ or $\{213, 231, 132\}$}
Let $S=\{213, 231, 123\}$ or $\{213, 231, 132\}$. Note that for the latter set of patterns, the recurrence below is already given in \cite{andersarcher}. Consider a rooted tree on $[n]$ avoiding $S$ with root $i$. As in the case of avoiding $\{213, 231\}$, in a rooted tree avoiding $S$ with root $i$, the vertices $i+1, i+2, \dots, n$ form a forest, as do the vertices $1, 2, \dots, i-1$. In addition, we have that the forest on $\{i+1, \dots, n\}$ is increasing if $S=\{213, 231, 123\}$, and decreasing if $S=\{213, 231, 132\}$. The forest on $\{1, 2, \dots, i-1\}$ can be any forest on $[i-1]$ avoiding $S$. So, we arrive at the recurrence

\[T(n)=\sum_{i=1}^n F(i-1)(n-i)!.\]

\subsection{Forests avoiding $k(k-1)(k-2)\cdots1$}
We give a $(k-1)$-dimensional recurrence, i.e., the recurrence has $k-1$ parameters. Let \linebreak $F(n, a_1, a_2, \dots, a_{k-1})$ (resp.\ $T(n, a_1, a_2, \dots, a_{k-1})$) be the number of forests (resp.\ trees) on $[n]$ with the following property: for $2 \le j \le k$, each instance of the pattern $j(j-1)(j-2)\cdots1$ that appears in the forest has starting point strictly greater than $a_{j-1}$. For example, $F(n, n-1)$ is the set of all forests on $[n]$ such that no instance of $21$ appears with starting point at most $n-1$; these are forests that are increasing except that vertex $n$ can go anywhere. Then, the number of forests avoiding $j(j-1)(j-2)\cdots1$ is $F(n, \underbrace{0, 0, \dots, 0}_{j-2 \text{ 0's}}, n)$.

If $a_\ell \ge a_{\ell+1}$, then clearly
\[F(n, a_1, \dots, a_\ell, a_{\ell+1}, \dots, a_{k-1})=F(n, a_1, \dots, a_\ell, a_\ell+1, a_{\ell+2}, a_{\ell+3}, \dots, a_{k-1}), \]
as there being no instances of $(\ell+1)\ell\cdots1$  with a starting point at most $a_\ell$ implies that there are no instances of $(\ell+2)(\ell+1)\ell\cdots1$ with a starting point at most $a_\ell+1$. So, we can always replace $a_\ell$ with $\max(a_\ell, a_{\ell-1}+1)$ in $F(n, a_1, a_2, \dots, a_{k-1})$, and similarly in $T(n, a_1, a_2, \dots, a_{k-1})$. Unless otherwise specified, the parameters are assumed to be of this form in the recurrences below. So, $F(n, \underbrace{0, 0, \dots, 0}_{j-2 \text{ 0's}}, n)$ is equal to $F(n, 1, 2, \dots, j-2, n)$, with both values equal to the number of forests on $[n]$ avoiding the pattern $j(j-1)\cdots1$.

Now we determine a recurrence for $F$ and $T$. For $T$, we do casework based on what the root vertex is. Consider a tree counted by $T(n, a_1, \dots, a_{k-1})$, such that for all $2 \le \ell \le k-1$, either $a_\ell \ge a_{\ell-1}+1$ or $a_\ell=n$. Let $i$ be the root vertex.\\

\noindent\textbf{Case 1}: $i=1$. The remaining part of the tree without $i$ is a forest on $[n-1]$ with each label incremented by $1$, so there are $F(n-1, a_1-1, a_2-1, \dots, a_{k-1}-1)$ such trees. \\
\\
\textbf{Case 2}: $2 \le i \le a_1$. If the root vertex is between $2$ and $a_1$, then since the vertex $1$ must be somewhere below, there is an instance of $21$ with starting point at most $a_1$, which is not possible. So, there are $0$ such trees. \\
\\
\textbf{Case 3}: $a_\ell+1 \le i \le a_{\ell+1}$. We have $n-1$ vertices left. Consider the vertices to be relatively labeled as $1, 2, \dots, n-1$, so everything less than $i$ stays the same and everything greater than $i$ is reduced by $1$. For a pattern $j(j-1)(j-2)\cdots1$ where $2 \le j \le \ell$, the same restriction applies: no instance of the pattern appears if the starting point is at most $a_{j-1}$. However, an instance of the pattern $(\ell+1)\ell\cdots1$ with starting point less than $i$ will create an instance of $(\ell+2)(\ell+1)\cdots1$ with starting point equal to $i \le a_{\ell+1}$ (by appending the root vertex $i$), which is not possible. \\
\\
So, all instances of $(\ell+1)\ell\cdots1$ must have starting point greater than $i$, which then, in the forest created by removing the root $i$ and relabeling the remaining vertices on $[n-1]$, must have starting point at least $i$. So, we set $a_\ell$ to $i-1$. Finally, for $j \ge \ell+2$, all instances of patterns of the form $j(j-1)\cdots 1$ also have starting point greater than $i$, and therefore have a starting point with a reduced label, so we reduce the requirement by $1$. Thus, the number of valid tree constructions with a root of $i$ is $F(n-1, a_1, a_2, \dots, a_{\ell-1}, i-1, a_{\ell+1}-1, a_{\ell+2}-1, \dots, a_{k-1}-1)$.

Putting it all together, we have
\begin{align*}
    T(n, a_1, \dots, a_{k-1})&=F(n-1, a_1-1, \dots, a_{k-1}-1)\\
    &\qquad+\sum_{i=a_1+1}^{a_2}F(n-1, i-1, a_2-1, a_3-1, \dots, a_{k-1}-1)\\
    &\qquad+\sum_{i=a_2+1}^{a_3} F(n-1, a_1, i-1, a_3-1, a_4-1, \dots, a_{k-1}-1)\\
    &\qquad+\sum_{i=a_3+1}^{a_4} F(n-1, a_1, a_2, i-1, a_4-1, a_5-1, \dots, a_{k-1}-1)\\
    &\qquad\vdotswithin{+}\\
    &\qquad+\sum_{i=a_{k-1}+1}^{a_k} F(n-1, a_1, a_2, \dots, i-1, a_{k-1}-1)\\
    &\qquad+\sum_{i=a_k+1}^{n} F(n-1, a_1, a_2, \dots, a_{k-2}, i-1).
\end{align*}

Now, we find a recurrence for $F$. The idea is essentially the same as in previous recurrences: we do casework based on the set of vertices that are in the same tree as vertex $1$. For this recurrence, we care a little about where the vertices come from (more than just the total number of vertices in the same tree as $1$). We split up ${2, \dots, n}$ into regions $[2, a_1], [a_1+1, a_2], [a_2+1, a_3], \dots, [a_{k-2}+1, a_{k-1}], [a_{k-1}+1, n]$, and do casework by the number of vertices we take from each region. Without loss of generality, set $a_1$ to be at least $1$, since the $a_1=1$ and $a_1=0$ cases are the same.

For $2 \le \ell \le k-1$, let $b_\ell$ be the number of vertices taken from the region $[a_{\ell-1}+1, a_\ell]$. Also, let $b_1, b_k$ be the number of vertices taken from $[2, a_1]$ and $[a_{k-1}+1, n]$, respectively. The number of possible trees including $1$ with these vertices is then
\[T\left(1+\sum_{\ell=1}^{k} b_\ell, 1+b_1, 1+b_1+b_2, \dots, 1+b_1+b_2+\cdots+b_{k-1}\right).\]
This is because if we label the vertices by their relative order $1, 2, \dots, 1+\sum_{\ell=1}^{k} b_\ell$, then any instance of $j(j-1)\cdots1$ in the first $1+b_1+\cdots+b_{j-1}$ vertices gives an instance of $j(j-1)\cdots1$ with starting point at most $a_{j-1}$ in the original forest, which is not possible. Similarly, on the remaining vertices, the number of possible forests is \[F\left(n-1-\sum_{\ell=1}^{k} b_\ell, a_1-1-b_1, a_2-1-b_1-b_2, a_3-1-b_1-b_2-b_3, \dots, a_{k-1}-1-\sum_{\ell=1}^{k-1} b_\ell\right).\]

Thus, we arrive at the following recurrence:

\begin{align*}
F(n, a_1, a_2, \dots, a_{k-1}) &= \sum_{b_1=0}^{a_1-1} \sum_{b_2=0}^{a_2-a_1} \cdots \sum_{b_{k-1}=0}^{a_{k-1}-a_{k-2}}\sum_{b_{k}=0}^{n-a_{k-1}}\Bigg[\binom{a_1-1}{b_1} \binom{a_2-a_1}{b_2}\cdots\binom{a_{k-1}-a_{k-2}}{b_{k-1}}\binom{n-a_{k-1}}{b_{k}}\\
&\qquad\times T\left(1+\sum_{\ell=1}^{k} b_\ell, 1+b_1, 1+b_1+b_2, \dots, 1+b_1+b_2+\cdots+b_{k-1}\right)\\
&\qquad\times F\left(n-1-\sum_{\ell=1}^{k} b_\ell, a_1-1-b_1, a_2-1-b_1-b_2, \dots, a_{k-1}-1-\sum_{\ell=1}^{k-1} b_\ell\right)\Bigg].
\end{align*}

The number of forests on $[n]$ avoiding the pattern $k(k-1)\cdots1$ is equal to $F(n, \underbrace{0, 0, \dots, 0}_{k-2 \text{ 0's}}, n)$. It turns out that we can reduce the dimension of the recurrence by $1$ in this specific case, where we avoid $k(k-1)\cdots1$ with no additional restrictions. We claim that when solving for this value using the recurrence, the first and last parameters are always the same. If we start with $F(i, \underbrace{0, 0, \dots, 0}_{k-2 \text{ 0's}}, i)$ for some $i$, then in the recurrence for $F$, we have that $b_{k}$ as defined above is always $0$, meaning that in \[T\left(1+\sum_{\ell=1}^{k} b_\ell, 1+b_1, 1+b_1+b_2, \dots, 1+b_1+b_2+\cdots+b_{k-1}\right),\] the first and last parameters are the same. In the recurrence for $F$, the term \[F\left(n-1-\sum_{\ell=1}^{k} b_\ell, a_1-1-b_1, a_2-1-b_1-b_2, a_3-1-b_1-b_2-b_3, \dots, a_{k-1}-1-\sum_{\ell=1}^{k-1} b_\ell\right)\] also has equal first and last parameters. Finally, in the recurrence for any $F(i, \underbrace{0, 0, \dots, 0}_{k-2 \text{ 0's}}, i)$, all the terms are $F$ terms with first and last parameter $i-1$. 

Thus, we write new recurrences defining $F'(n, a_1, \dots, a_\ell)$ to be $F(n, a_1, \dots, a_\ell, n)$ and $T'(n, a_1, \dots, a_\ell)$ to be $T(n, a_1, \dots, a_\ell, n)$, with the number of forests avoiding $k(k-1)\cdots1$ to be $F'(n, \underbrace{0, 0, \dots, 0}_{k-2 \text{ 0's}})$. For example, we get a $2$-dimensional recurrence for avoiding $321$, where $F(n)=F'(n, 1)$ and $T(n)=T'(n, 1)$:
\begin{gather*}
    F'(n, m)=\sum_{i=0}^{m-1}\sum_{j=0}^{n-m} \binom{m-1}{i}\binom{n-m}{j}T'(1+i+j, i+1)F'(n-i-j-1, m-1-i),\\
    T'(n, m)=F'(n-1, m-1)+\sum_{i=m+1}^n F'(n-1, i-1).\\
\end{gather*}

\subsection{Forests avoiding $\{12,k(k-1)(k-2)\cdots 1\}$}
For convenience, we assume $k\ge 2$. Note that any nontrivial set of patterns containing $12$ is equivalent to $\{12\}$ or $\{12,k(k-1)\cdots 1\}$ for some $k\ge 2$.

The main observation is that a forest avoids both $12$ and $k(k-1)\cdots 1$ if and only if it is decreasing and the depth of each constituent rooted tree is at most $k-1$; such forests are discussed by Luschny in \cite{luschnytrees}.

As $k$ varies, the numbers of forests avoiding the sets of the form $\{12,k(k-1)\cdots 1\}$ are enumerated by sequences known as the higher-order Bell numbers, which were defined and studied by Luschny in \cite{luschnybell}. Following his work, we first define the Bell transform and the $i$th-order Bell numbers. The \textit{Bell transform} (as given in the ``Bell matrix" section of \cite{luschnybell}) takes a sequence $a_0,a_1,a_2,\dots$ and outputs a triangular array $\Delta$ with entries $\Delta(n,m)$, where $m,n$ are integers such that $0\le m\le n$; we define $\Delta(0,0)=1$, $\Delta(n,0)=0$ and $\Delta(n,1)=a_{n-1}$ for $n\ge 1$, and 
\[\Delta(n,m)=\sum_{j=1}^{n-m+1} \binom{n-1}{j-1}\Delta(n-j, m-1)\Delta(j,1)\]
for $2\le m\le n$.

Let $S_0$ be the sequence defined by $a_n=1$ for all $n\ge 0$. Now for all integers $i\ge 0$, let $\Delta_i(n,m)$ be the Bell transform of $S_i$. We also let $S_{i+1}$ be the sequence of row sums of the triangle $\Delta_i(n,m)$, so $(S_{i+1})_n=\sum_{m=0}^n \Delta_i(n,m)$. For all nonnegative integers $i$, we define the sequence of \textit{$i$th-order Bell numbers} to be $S_i$. The following result is already observed in sequence A179455 of the OEIS \cite{oeis}; we provide a proof.

\begin{proposition}
Let $k\ge 2$ be a positive integer. Then for all $n\ge 0$,
\[f_n(12,k(k-1)\cdots 1)=(S_{k-2})_n.\]
\end{proposition}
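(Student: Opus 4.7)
The plan is to combine the observation already given in the excerpt---that a forest avoids $\{12,k(k-1)\cdots 1\}$ if and only if it is decreasing with each constituent tree of depth at most $k-1$---with a combinatorial interpretation of the Bell transform. I will prove by simultaneous induction on $i\ge 0$ the two statements: (A$_i$)~$(S_i)_n$ equals the number of decreasing forests on $[n]$ whose constituent trees each have depth at most $i+1$; and (B$_i$)~$\Delta_i(n,m)$ equals the number of decreasing forests on $[n]$ consisting of exactly $m$ trees, each of depth at most $i+2$. Since (B$_i$) implies (A$_{i+1}$) via $(S_{i+1})_n=\sum_{m=0}^{n}\Delta_i(n,m)$, specializing (A$_i$) with $i=k-2$ and applying the observation yields the proposition.

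The base case is handled directly: (A$_0$) holds because the only decreasing forest on $[n]$ whose trees have depth at most $1$ is the one consisting of $n$ isolated vertices, matching $(S_0)_n=1$; and (B$_0$) reduces to the identification $\Delta_0(n,m)=S(n,m)$ with the Stirling numbers of the second kind, via the standard bijection sending a set partition of $[n]$ into $m$ blocks to the depth-at-most-$2$ decreasing forest whose trees have the blockwise maxima as roots and the remaining elements as children.

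For the inductive step, I assume (B$_{i-1}$) (and hence (A$_i$)) and verify (B$_i$) by matching recurrences. Let $c(n,m)$ denote the right-hand side of (B$_i$). Any decreasing tree on $[n]$ of depth at most $i+2$ must have the maximum label $n$ as its root, and removing $n$ leaves a decreasing forest on $[n-1]$ of depth at most $i+1$, so (A$_i$) yields $c(n,1)=(S_i)_{n-1}=\Delta_i(n,1)$. For $m\ge 2$, conditioning on the size $\ell$ of the tree containing vertex $n$ (which is automatically its root) produces
\[c(n,m)=\sum_{\ell=1}^{n-m+1}\binom{n-1}{\ell-1}\,c(\ell,1)\,c(n-\ell,m-1),\]
exactly the Bell transform recurrence defining $\Delta_i(n,m)$. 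An inner induction on $n$ then gives $c=\Delta_i$, completing (B$_i$).

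The main subtlety is bookkeeping the off-by-one indexing---the depth bound is $i+1$ in (A$_i$) but $i+2$ in (B$_i$), and the Bell transform operates one level ``down'' from the sequence $S_i$---together with coordinating the outer induction on $i$ (which supplies the single-tree count via (A$_i$)) with the inner induction on $n$ (which propagates the recurrence). Once the interpretation is in place, the decomposition by the tree containing the maximum vertex is entirely routine and parallels the tree-to-forest recurrences already derived in Section~\ref{sec:treeforest}.
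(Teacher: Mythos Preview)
Your proposal is correct and follows essentially the same approach as the paper: both arguments induct on the pattern length (your $i$ is the paper's $k-2$), interpret $\Delta_i(n,m)$ as the number of decreasing forests on $[n]$ with exactly $m$ trees each of bounded depth, and verify this by decomposing according to the tree containing the maximum vertex $n$ so as to recover the Bell transform recurrence. The only cosmetic difference is that you package the induction as simultaneous statements (A$_i$) and (B$_i$) and handle (B$_0$) via the Stirling number identification, whereas the paper starts from $k=2$ and folds the analogue of (B$_i$) into the inductive step.
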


\begin{proof}
We induct on $k$, where the base case $k=2$ follows from the fact that $f_n(12,21)=1$ for all $n\ge 0$. Now suppose that $k\ge 3$, and $f_n(12,(k-1)(k-2)\cdots 1)=(S_{k-3})_n$ for all $n\ge 0$. To show that $f_n(12,k(k-1)\cdots 1)=(S_{k-2})_n$, it suffices to show that for $0\le m\le n$, the quantity $\Delta_{k-3}(n,m)$ equals the number of decreasing forests on $[n]$ consisting of exactly $m$ rooted trees such that the depth of each rooted tree is at most $k-1$. To do this, we induct on $n$. The $n=0$ case is trivial, so let $n\ge 1$. Within the inductive step on $n$, we induct on $m$. The cases $m=0,1$ are easy, so we consider the case $2\le m\le n$. Vertex $n$ must be a root of any decreasing forest on $[n]$. Considering the rooted tree containing $n$ and using the inductive hypotheses, we find that the number of decreasing forests on $[n]$ consisting of exactly $m$ rooted trees each with depth at most $k-1$ equals
\[\sum_{j=1}^{n-m+1}\binom{n-1}{j-1}f_{j-1}(12,(k-1)(k-2)\cdots 1)\Delta_{k-3}(n-j,m-1).\]
By the outermost induction, we know $f_{j-1}(12,(k-1)(k-2)\cdots 1)=(S_{k-3})_{j-1}=\Delta_{k-3}(j,1)$. Thus
\[\sum_{j=1}^{n-m+1}\binom{n-1}{j-1}f_{j-1}(12,(k-1)(k-2)\cdots 1)\Delta_{k-3}(n-j,m-1)=\Delta_{k-3}(n,m),\]
so we are done.
\end{proof}

\section{Forest-Wilf equivalences}\label{sec:forest-wilf-equivalences}
In \cite{andersarcher}, Anders and Archer proved that $123$ and $132$ are forest-Wilf equivalent. Hopkins and Weiler implied the same result as a corollary of Theorem 3 from \cite{poset}, a more general result about pattern avoidance in posets. Both proofs of this fact actually imply that $123$ and $132$ are forest-structure-Wilf equivalent. Here we prove \cref{thm:westbijection} in \cref{subsec:west-bijection-proof}, generalizing this result. In \cref{subsec:restrict-west-bijection}, we then restrict the bijection to find families of inequalities and forest-Wilf equivalences between pairs of patterns, proving \cref{prop:forest-wilf-inequality,thm:213-equivalences}.

\subsection{Generalizing the forest-structure-Wilf equivalence of $123$ and $132$}\label{subsec:west-bijection-proof}
In this section we prove \cref{thm:westbijection}. Using the notation in the statement of \cref{thm:westbijection}, let $k\ge 3$ be an integer, and choose $\tau\in\mathcal{S}_k$ such that $\tau(k-1)=k-1$ and $\tau(k)=k$, and define $\widetilde\tau\in\mathcal{S}_k$ by letting $\widetilde\tau(i)=\tau(i)$ for $1\le i\le k-2$, $\widetilde\tau(k-1)=k$, and $\widetilde\tau(k)=k-1$. We wish to show that $\tau$ and $\widetilde\tau$ are forest-structure-Wilf equivalent. Note that this includes the pair $\tau=123,\widetilde\tau=132$ as a special case.

In \cite{westthesis}, West gave a generalization of the Simion--Schmidt bijection from \cite{simionschmidt} to show that $\tau$ and $\widetilde\tau$ are Wilf equivalent permutation patterns. Our proof of \cref{thm:westbijection} combines his methods with those used in \cite{andersarcher}. First, let $\bar\tau\in\mathcal{S}_{k-1}$ be the pattern defined by letting $\bar\tau(i)=\tau(i)=\widetilde\tau(i)$ for $1\le i\le k-2$ and $\bar\tau(k-1)=k-1$.
\begin{definition}
A vertex $v$ of a forest $F\in F_n$ is \textit{special} if there exists an instance of $\bar\tau$ that has $v$ as its endpoint. We say this instance of $\bar\tau$ \textit{establishes} that $v$ is special.
\end{definition}

\begin{figure}
\includegraphics{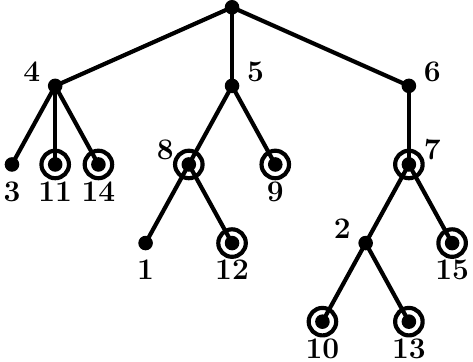}
\caption{The example forest from \cref{fig:forestexample}, which lies in $F_n(\widetilde\tau)$, where $\tau=123$. Its special vertices (with respect to $\tau=123$) are circled. For example, the vertices labeled $6$ and $10$ establish that the vertex labeled $10$ is special.}
\label{fig:forest1withcircles}
\end{figure}

For an example of special vertices, see \cref{fig:forest1withcircles}. We now define two operations on a special vertex $v$. We view each operation as fixing the vertices of $F$, but permuting the labels among the subtree rooted at $v$.

\begin{definition}
Let $v$ be a special vertex of a forest, and let $L$ be the label set of the subtree rooted at $v$. Let $x$ be the largest label in $L$, and let $y$ be the smallest label in $L$ such that if $v$ were labeled with $y$, then the vertex $v$ would remain special.

To \textit{shuffle} vertex $v$, we first label $v$ with $x$, and then relabel the strict descendants of $v$ with the elements of $L\setminus\{x\}$ so that the initial relative order of the labels of the vertices is preserved.

To \textit{antishuffle} vertex $v$, we first label $v$ with $y$, and then relabel the strict descendants of $v$ with the elements of $L\setminus\{y\}$ so that the initial relative order of the labels of the vertices is preserved.
\end{definition}
Note that $x$ and $y$ depend only on $L$, and not on how the strict descendants of $v$ are labeled with $L$. Also, $y\in L$ exists since $v$ is assumed to be special. We note the following:

\begin{lemma}\label{lem:shuffle}
Applying the shuffle operation to a special vertex $v$ preserves the set of special vertices of $F$. Moreover, any vertex $u\in F$ that is not special retains its original label.
\end{lemma}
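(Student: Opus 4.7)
The plan is to exploit the fact that the shuffle operation only rearranges labels within the subtree rooted at $v$, and acts as the identity on labels strictly below $w$, the old label of $v$. First I would describe the relabeling explicitly: writing $L$ in increasing order as $c_1<c_2<\cdots<c_N$ and setting $w=c_j$, the vertex $v$'s label becomes $x=c_N$, while each strict descendant of $v$ with old label $c_i$ keeps it when $i<j$ and is relabeled $c_{i-1}$ when $i>j$. Two consequences will drive the rest of the argument: every label in the subtree that was originally below $w$ is unchanged, and $v$'s new label is the unique maximum in the subtree, so the relative order of all non-$v$ vertices in the subtree is preserved.

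For the claim that any non-special vertex keeps its label, I would note that labels outside the subtree of $v$ are untouched, and that the only strict descendants whose labels actually change are those whose old labels exceed $w$. For such a descendant $u$, I would show $u$ must already be special by fixing an instance $v_1,\dots,v_{k-2},v$ of $\bar\tau$ certifying that $v$ is special and considering the chain $v_1,\dots,v_{k-2},u$: because $\bar\tau(k-1)=k-1$, the labels of $v_1,\dots,v_{k-2}$ are all less than $w$ and hence less than $u$'s old label, while their mutual pattern is $\tau(1)\cdots\tau(k-2)$, so the chain is a valid instance of $\bar\tau$ ending at $u$. Contrapositively, a non-special vertex inside the subtree must have old label below $w$, and such labels are fixed by the shuffle.

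For the preservation of the special set I would handle each type of vertex in turn. Vertices outside the subtree of $v$ and all their ancestors retain their labels, so nothing changes. For $v$ itself, the same instance $v_1,\dots,v_{k-2},v$ continues to work after shuffling, since the unchanged labels of $v_1,\dots,v_{k-2}$ are all still strictly less than $v$'s new label $x\ge w$, the chain's maximum. For a strict descendant $u$ with old label exceeding $w$, the chain $v_1,\dots,v_{k-2},u$ from the previous step still witnesses specialness after the shuffle: $u$'s new label is the element of $L$ immediately below its old label and is therefore at least $w$, still strictly greater than the (unchanged) labels of $v_1,\dots,v_{k-2}$. For a strict descendant $u$ with old label less than $w$, the label of $u$ is fixed, and any witness chain must end with $u$'s label as its maximum; thus all other labels in the chain must lie below $w$ and so are untouched by the shuffle, giving a tautological bijection between witnessing chains before and after and settling both directions.

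I expect the main obstacle to be organizing the case analysis crisply, especially for strict descendants of $v$ whose ancestors in a hypothetical witness chain might themselves lie inside the subtree of $v$. The key idea dissolving this difficulty is that $\bar\tau$ forces the terminal vertex's label to be the maximum of the chain, which together with the explicit description of how the shuffle acts pins every other chain label into a region (below $w$ if $w_u<w$, or else outside the subtree of $v$) where the shuffle behavior is known in advance, so the chain survives the shuffle verbatim.
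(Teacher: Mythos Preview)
Your proposal is correct and follows essentially the same approach as the paper. Both arguments hinge on the same two observations: (i) any vertex in the subtree of $v$ whose original label exceeds $w$ is automatically special via the witness $v_1,\dots,v_{k-2}$ for $v$, and (ii) labels below $w$ are fixed by the shuffle, so witness chains for vertices $u$ with label below $w$ are untouched. The only cosmetic difference is that you give the relabeling explicitly and handle the converse direction (special after $\Rightarrow$ special before) by a direct bijection of witness chains, whereas the paper argues that direction by contradiction, noting that any $u_i$ in a post-shuffle witness whose label changed must have new label at least $w$, violating the requirement that $u$'s label be the maximum.
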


\begin{proof}
For the second statement, it suffices to only consider vertices $u$ in the subtree rooted at $v$. If the label of $u$ is initially smaller than the label of $v$, then $u$ retains its label after the shuffle. Thus, only vertices $u$ with labels at least as large as the label of $v$ can have their label change. Furthermore, if the label of $u$ is at least the label of $v$, then $u$ is special. These two observations imply the second statement.

For the first statement, let $u\in F$. We can again assume that $u$ lies in the subtree rooted at $v$. Suppose $u$ is special prior to the shuffle. If the label of $u$ is at least the label of $v$, then the new label of $u$ will also be at least the original label of $v$. Considering a sequence of vertices that established that $v$ was special, each member of which has its label preserved under the shuffle, one sees that $u$ will still be special after the shuffle. Now suppose the initial label of $u$ was less than the initial label of $v$. Then, the label of $u$ was the largest label among the labels of the sequence of vertices establishing $u$ was special. Each of these vertices has its label preserved, so $u$ remains special.

Now suppose $u$ is in the subtree rooted at $v$, and $u$ is special after the shuffle, but not special before the shuffle. Then before the shuffle, its label was less than the label of $v$; thus the shuffle preserved the label of $u$. Consider a sequence $u_1,\dots,u_{k-2},u$ establishing that $u$ is special after the shuffle. At least one of these vertices, say $u_i$, must have had its label change. Then, $u_i$ is a descendant of $v$, and its new label is at least the original label of $v$, and is thus larger than the new label of $u$. However, this contradicts the assumption on the sequence $u_1,\dots,u_{k-2},u$.
\end{proof}

We have an analogous lemma for antishuffles:

\begin{lemma}\label{lem:antishuffle}
Applying the antishuffle to a special vertex $v$ preserves the set of special vertices of $F$. Moreover, any vertex $u\in F$ that is not special retains its original label.
\end{lemma}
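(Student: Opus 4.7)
The plan is to closely mirror the proof of \cref{lem:shuffle}, with the role played there by the maximum label $x$ replaced here by the defining property of $y$. Writing $c$ for the original label of $v$, note that $y \le c$: since $v$ is already special with label $c$, the label $c$ is itself a candidate in the definition of $y$. By the definition of $y$ together with $\bar\tau(k-1)=k-1$, there exist strict ancestors $w_1,\dots,w_{k-2}$ of $v$ whose labels are all strictly less than $y$, such that $w_1,\dots,w_{k-2},v$ is an instance of $\bar\tau$ when $v$ is relabeled $y$. The key observation is that these $w_i$ lie outside the subtree rooted at $v$, so the antishuffle leaves their labels unchanged; this sequence plays the role of a ``universal witness chain'' in everything that follows.

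I would first handle the second statement. The antishuffle alters only labels of vertices in the subtree rooted at $v$, and since it preserves relative order among the strict descendants, only those strict descendants of $v$ whose old label lies in the interval $[y,c)$ actually have their labels change. It therefore suffices to show that any strict descendant $u$ of $v$ with old label at least $y$ is already special before the antishuffle. Appending $u$ to the universal witness chain yields an ancestor chain $w_1,\dots,w_{k-2},u$ whose labels realize $\bar\tau$, since $u$'s old label strictly exceeds every $w_i$'s label and $\bar\tau(k-1)=k-1$ is the largest entry of $\bar\tau$, establishing that $u$ is special.

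For the first statement, vertices $u$ outside the subtree rooted at $v$ have the labels of all their ancestors untouched, so their special status is trivially preserved, and $v$ itself is special before by hypothesis and remains special after by the very definition of $y$. Suppose $u$ is a strict descendant of $v$ that is special before the antishuffle. If its old label is at least $y$, then its new label is also at least $y$, so the universal witness chain $w_1,\dots,w_{k-2},u$ still witnesses specialty after. If instead its old label is less than $y$, then any chain witnessing $u$'s specialty before must avoid $v$ (whose label $c \ge y$ exceeds $u$'s) as well as every strict descendant of $v$ with old label at least $y$ (again exceeding $u$'s), since $\bar\tau(k-1)=k-1$ forces $u$ to have the maximum label in the chain; hence every vertex in the chain has its label preserved, and the same chain works after. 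Conversely, suppose $u$ is a strict descendant of $v$ special after but not before. By the previous analysis, $u$'s old label must be less than $y$, so $u$'s label is preserved. Take a chain $u_1,\dots,u_{k-2},u$ witnessing specialty after; some $u_i$ must have had its label changed by the antishuffle, else the same chain would witness specialty before. But any such $u_i$ is either $v$ (with new label $y$) or a strict descendant of $v$ with old label in $[y,c)$ (with new label in $(y,c]$), so the new label of $u_i$ is at least $y$, strictly exceeding $u$'s new label; this contradicts $\bar\tau(k-1)=k-1$.

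The main subtlety compared to the proof of \cref{lem:shuffle} is tracking precisely which vertices have their labels changed by the antishuffle---isolating the interval $[y,c)$ is central. Once the universal witness chain supplied by the definition of $y$ is in hand, both directions of the specialty preservation follow from essentially the same relative-order comparisons used for shuffling.
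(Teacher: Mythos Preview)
Your proof is correct and follows the same overall structure as the paper's: isolate the labels in $[y,c]$ as the only ones that move, deduce the second statement, then check preservation of specialty in both directions by case analysis on whether $u$'s label is below $y$ or not. Your explicit ``universal witness chain'' $w_1,\dots,w_{k-2}$ is exactly what the paper uses implicitly when it asserts that any descendant with label at least $y$ is special.

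The one genuine difference is in the converse direction (``special after but not before''). The paper argues by iteration: it finds some $u_i$ in the witness chain whose label changed, notes $u_i$ was special before and hence after, picks a new witness chain $u_1',\dots,u_{k-2}',u_i$ for $u_i$, and argues that one of \emph{those} must also have changed, continuing indefinitely to reach a contradiction by finiteness. Your argument is shorter and more transparent: you simply observe that any $u_i$ whose label changed has new label at least $y$, which already exceeds $u$'s (preserved) label, directly contradicting $\bar\tau(k-1)=k-1$. This bypasses the descent entirely and is a cleaner way to close the case.
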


\begin{proof}
Similarly to the proof of \cref{lem:shuffle}, the second statement only entails checking vertices $u$ in the subtree rooted at $v$, and one can show directly that special vertices remain special.

Our proof of the remaining half of the first statement is slightly different from the analogous part of our proof of \cref{lem:shuffle}. Suppose $u$ is not special before the antishuffle, but is special after the antishuffle. As noted above, the label of $u$ is preserved. Suppose the sequence $u_1,\dots,u_{k-2},u$ of vertices establishes that $u$ is special after the antishuffle, so for some $1\le i\le k-2$, the label of $u_i$ changed during the antishuffle. Then, $u_i$ is a strict ancestor of $u$, and it must have been special before the antishuffle, so also special after the antishuffle. We then consider the sequence of vertices $u_1',\dots,u_{k-2}',u_i$ establishing that $u_i$ is special after the antishuffle. If the labels of each of $u_1',\dots,u_{k-2}'$ is preserved, then the sequence $u_1',\dots,u_{k-2}',u$ establishes that $u$ is special before the antishuffle, a contradiction. Thus, at least one of the vertices among $u_1',\dots,u_{k-2}'$ had its label change. We may iterate this process indefinitely, but since the forest is finite, this is a contradiction. Therefore, if $u$ is special after the antishuffle, then it must have been special before the antishuffle as well.
\end{proof}

Fix $n\ge 1$. We now define maps $\alpha,\beta\colon F_n\to F_n$, which we view as fixing the vertices of $F$ but permuting its labels (as was the case with shuffles and antishuffles).
\begin{definition}
The map $\alpha\colon F_n\to F_n$ is defined as follows: given $F\in F_n$, we perform a breadth-first search on the vertices of $F$, in reverse order, so that we end with the roots of the trees comprising $F$. If the vertex under consideration is special, then shuffle that vertex; otherwise, continue. The resulting labeled forest is $\alpha(F)$.

Similarly, the map $\beta\colon F_n\to F_n$ is defined as follows: given $F\in F_n$, we perform a breadth-first search on the vertices of $F$, in the usual order; at each vertex $v$, we antishuffle $v$ if $v$ is special, and otherwise continue. The resulting forest is $\beta(F)$.
\end{definition}
We see that $\alpha$ is well-defined, since by \cref{lem:shuffle}, the set of special vertices remains constant throughout the process, and moreover, the resulting labeled forest does not depend on the order in which the vertices are considered, as long as each vertex is considered after each of its strict descendants. Similarly, \cref{lem:antishuffle} demonstrates that $\beta(F)$ is well-defined. Note that as unlabeled rooted forests, $\alpha(F)$ and $\beta(F)$ are isomorphic to $F$ (i.e., they have the same structure).

\begin{figure}
\includegraphics{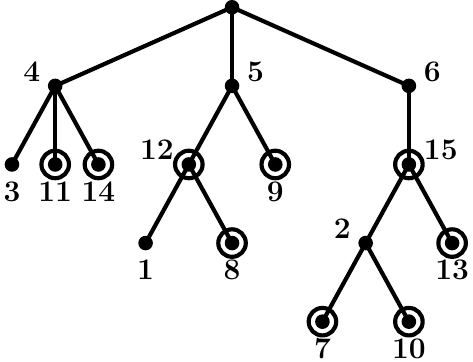}
\caption{The result of applying $\alpha$ to the forest in \cref{fig:forest1withcircles}, with respect to $\tau=123$. The special vertices are circled.}
\label{fig:forest2withcircles}
\end{figure}

As an example, the forest shown in \cref{fig:forest2withcircles} is the result of applying $\alpha$ to the forest shown in \cref{fig:forest1withcircles}. Note that the two forests have the same set of special vertices, and that the second forest avoids $123$.

\begin{lemma}
For all $F\in F_n$, we have $\alpha(F)\in F_n(\tau)$ and $\beta(F)\in F_n(\widetilde\tau)$.
\end{lemma}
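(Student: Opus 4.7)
The plan is to argue both inclusions by contradiction: starting from a hypothetical forbidden instance $v_1,\dots,v_k$ in $\alpha(F)$ or $\beta(F)$, I will derive a conflict with an invariant controlling the label carried by $v_{k-1}$ in the output forest. A common first observation is that, since $\tau$ and $\widetilde\tau$ agree on their first $k-2$ entries and place their two largest values at positions $k-1$ and $k$, the sub-chain $v_1,\dots,v_{k-1}$ is in both cases an instance of $\bar\tau$, so $v_{k-1}$ is special in the output forest.

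For the first claim, I would prove the invariant that in $\alpha(F)$ every special vertex $v$ carries the largest label in the label set of its subtree. The shuffle enforces this equality at the moment $v$ is processed in the reverse BFS, and by \cref{lem:shuffle} the vertex $v$ is indeed special at that moment (since specialness is preserved throughout). The only subsequent operations that can affect $v$'s subtree are shuffles of strict ancestors of $v$, and such a shuffle permutes the subtree labels while preserving the relative order among the strict descendants; therefore $v$ retains the maximum label of its subtree. A hypothetical $\tau$-instance then produces a strict descendant $v_k$ of the special vertex $v_{k-1}$ whose label exceeds $v_{k-1}$'s, contradicting the invariant.

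For the second claim, the analogous invariant I would prove is that in $\beta(F)$ every special vertex $v$ carries the smallest label $y$ in its subtree label set such that $v$ relabeled by $y$ would still be special. The antishuffle assigns exactly this label to $v$ when it is processed, and by \cref{lem:antishuffle} the vertex $v$ is still special at that moment. Since $\beta$ uses forward BFS, every strict ancestor of $v$ has already received its final label by this point, and the only operations that follow are antishuffles of descendants of $v$, which merely permute labels within proper sub-subtrees of $v$ and so fix both $v$'s own label and the label set of $v$'s subtree. The contradiction then relies on the observation that in a $\widetilde\tau$-instance $v_k$'s label is the second largest in the chain and in particular exceeds every label of $v_1,\dots,v_{k-2}$; consequently $v_1,\dots,v_{k-2},v_{k-1}$ with $v_{k-1}$ relabeled to $v_k$'s label is still an instance of $\bar\tau$. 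Thus $v_{k-1}$ would remain special if it carried $v_k$'s label, which is strictly smaller than $v_{k-1}$'s current label and lies in $v_{k-1}$'s subtree label set, contradicting the minimality built into the invariant.

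The main obstacle in both parts is pinning down these invariants, especially for $\beta$: one must keep careful track of which later operations of the procedure can affect the label assigned to $v$ and the label set of its subtree, exploiting the interplay between the BFS direction (reverse for $\alpha$, forward for $\beta$) and the fact that both shuffles and antishuffles preserve the relative order among strict descendants. The contradiction step for $\beta$ also uses in an essential way the hypotheses $\tau(k-1)=k-1$ and $\tau(k)=k$, which together pin down the positions of the two largest entries of both $\tau$ and $\widetilde\tau$ and allow the relabeling trick above to preserve the $\bar\tau$-pattern.
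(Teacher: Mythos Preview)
Your proposal is correct and follows essentially the same approach as the paper. The paper's proof is extremely terse: it observes that $v_{k-1}$ and $v_k$ are special and that the label of $v_{k-1}$ is less than that of $v_k$, then simply says ``considering the definition of $\alpha$, this is a contradiction,'' and declares the $\beta$ case similar. You have spelled out precisely the invariant the paper leaves implicit (that each special vertex ends up carrying the maximal, respectively the minimal still-special, label of its subtree), and verified carefully why the remaining shuffles/antishuffles in the BFS order do not disturb it; your contradiction step is the same as the paper's.
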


\begin{proof}
Suppose first that $\alpha(F)$ contains $\tau$. Let $v_1,v_2,\dots,v_k$ be an instance of $\tau$ in $\alpha(F)$, so that $v_i$ is an ancestor of $v_j$ for $i<j$. Then, $v_{k-1},v_k$ are special in $\alpha(F)$, and the label of $v_{k-1}$ is less than the label of $v_k$. But considering the definition of $\alpha$, this is a contradiction. The proof that $\beta(F)\in F_n(\widetilde\tau)$ is similar.
\end{proof}

To prove \cref{thm:westbijection}, we will show that $\alpha$ and $\beta$ restrict to inverse maps $F_n(\widetilde\tau)\to F_n(\tau)$ and $F_n(\tau)\to F_n(\widetilde\tau)$, respectively. Slightly abusing the notation, we will still refer to the restricted maps by $\alpha$ and $\beta$.

\begin{proof}[Proof of \cref{thm:westbijection}]
Let $F\in F_n(\tau)$; we claim $\alpha(\beta(F))=F$. It suffices to show that for any special vertex $v$ of $F$, if we antishuffle each special strict ancestor of $v$, in breadth-first search order, then the label of $v$ becomes the largest among its descendants. If this is true, then each shuffle in $\alpha$ exactly undoes one antishuffle in $\beta$. But since $F$ initially avoided $\tau$, this was true in $F$, and the relative order of the labels of the subtree rooted at $v$ is preserved by each shuffle, so it remains true.

Similarly, to show that $\beta(\alpha(F))=F$ for any $F\in F_n(\widetilde\tau)$, we show that for any special vertex $v$ of $F$, if we shuffle each special strict descendant of $v$, in reverse breadth-first search order, then the label of $v$ becomes the smallest label $y$ among the label set of the subtree rooted at $v$ such that $v$ would still be special if $v$ were labeled with $y$. Since $F$ initially avoided $\widetilde\tau$, this must have been true in $F$, and then since both the label of $v$ and the label set of the subtree rooted at $v$ have not changed, it must still be true. Thus, $\alpha,\beta$ are inverse maps.
\end{proof}

Note that up to complementation, this is similar to the method applied in \cite{andersarcher} for the case $\tau=123,\widetilde\tau=132$. However, using our terminology, the $\alpha$ given by Anders and Archer shuffles special vertices in the usual breadth-first search order, instead of the reversed order, as is done above.

\subsection{Restricting the bijection}\label{subsec:restrict-west-bijection}
Using the notation from \cref{subsec:west-bijection-proof}, we again consider the maps $\alpha\colon F_n(\widetilde\tau)\to F_n(\tau)$ and $\beta\colon F_n(\tau)\to F_n(\widetilde\tau)$. We first prove the following result:

\begin{proposition}\label{prop:forest-wilf-inequality}
Let $m\ge 2$, and let $\sigma\in\mathcal{S}_m$ be such that $\sigma(m)=m$. The restriction of $\alpha$ to $F_n(\sigma,\widetilde\tau)$ yields an injection $F_n(\sigma,\widetilde\tau)\to F_n(\sigma,\tau)$, so $f_n(\sigma,\widetilde\tau)\le f_n(\sigma,\tau)$ for all $n$.
\end{proposition}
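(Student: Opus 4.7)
The plan is to argue by contrapositive. Injectivity is automatic because $\alpha$ is already a bijection $F_n(\widetilde\tau)\to F_n(\tau)$ by \cref{thm:westbijection}, so it suffices to show that $\alpha(F)\in F_n(\sigma)$ whenever $F\in F_n(\sigma,\widetilde\tau)$. Suppose for contradiction that $\alpha(F)$ contains an instance $v_1,\dots,v_m$ of $\sigma$; since $\sigma(m)=m$, the label of $v_m$ in $\alpha(F)$ is strictly greater than each of the labels of $v_1,\dots,v_{m-1}$.

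The first key step is to show that none of $v_1,\dots,v_{m-1}$ is special in $\alpha(F)$. If $v_j$ with $j<m$ were special, witnessed by a chain $u_1,\dots,u_{k-2},v_j$ whose labels are in the relative order $\bar\tau$ (with the maximum at $v_j$ by $\bar\tau(k-1)=k-1$), then appending $v_m$ would produce an ancestor chain in $\alpha(F)$ with labels in the relative order $\tau$, contradicting $\alpha(F)\in F_n(\tau)$. Iterating \cref{lem:shuffle} through the shuffles comprising $\alpha$, the set of special vertices is invariant and non-special vertices retain their labels, so $v_j$ is non-special in $F$ and has the same label in $F$ as in $\alpha(F)$ for every $j<m$.

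The second key step produces an explicit instance of $\sigma$ in $F$. Let $x_{v_m}$ denote the maximum label in the subtree of $v_m$ in $F$, and let $w$ be the vertex carrying that label; since $w$ lies in $v_m$'s subtree, $v_1,\dots,v_{m-1},w$ is an ancestor chain in $F$. Combined with the first step, the labels along this chain realize the pattern $\sigma$ provided $x_{v_m}$ strictly exceeds the label of each $v_j$ for $j<m$, which in turn follows if $x_{v_m}$ is at least the label of $v_m$ in $\alpha(F)$ (that latter label is already greater than each label of $v_1,\dots,v_{m-1}$). The existence of such an instance would contradict $F\in F_n(\sigma)$ and close the argument.

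The main obstacle is thus the inequality between $x_{v_m}$ and the label of $v_m$ in $\alpha(F)$. If $v_m$ is non-special, labels are preserved and the inequality is immediate. If $v_m$ is special, the shuffle at $v_m$ raises its label to $x_{v_m}$ (since shuffles at descendants of $v_m$ do not change the label set of $v_m$'s subtree, so the maximum at the time of the shuffle is $x_{v_m}$), but later shuffles at strict ancestors could in principle modify this label. I plan to establish a monotonicity sub-lemma: a single shuffle at a vertex $u$ never increases the label of any strict descendant of $u$. Writing $L_u$ for the label set of $u$'s subtree, $l_u$ for the current label of $u$, and $x_u=\max L_u$, the relabeling of strict descendants replaces $L_u\setminus\{l_u\}$ with $L_u\setminus\{x_u\}$ while preserving relative order; since $x_u\ge l_u$, the $r$-th smallest of $L_u\setminus\{x_u\}$ is at most the $r$-th smallest of $L_u\setminus\{l_u\}$. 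Iterating this sub-lemma over all shuffles at strict ancestors of $v_m$ (which occur after the shuffle at $v_m$ in reverse BFS order) yields the required inequality and completes the proof.
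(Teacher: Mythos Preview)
Your argument is correct and is essentially the dual of the paper's proof. The paper works with $\beta$ instead of $\alpha$: it shows that if $F\in F_n(\tau)$ contains $\sigma$, then so does $\beta(F)$, by noting that $v_1,\dots,v_{m-1}$ are non-special (same argument as yours), and then tracking the label of $v_m$ forward through the antishuffles---it weakly increases as strict special ancestors are antishuffled, and thereafter stays inside the subtree rooted at $v_m$. Your version runs the same idea backward through $\alpha$, replacing the paper's ``antishuffles weakly increase descendant labels'' with the equivalent ``shuffles weakly decrease descendant labels,'' and locating the large label via $\max$ of the subtree in $F$ rather than tracking it dynamically. Both routes hinge on the identical two ingredients: non-speciality of $v_1,\dots,v_{m-1}$ (forced by $\alpha(F)\in F_n(\tau)$, respectively $F\in F_n(\tau)$), and the order-theoretic monotonicity of the relabeling $L_u\setminus\{l_u\}\to L_u\setminus\{x_u\}$.
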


\begin{proof}
Using the fact that $\alpha,\beta$ are inverse maps between $F_n(\widetilde\tau)$ and $F_n(\tau)$, which is proven in the proof of \cref{thm:westbijection}, it suffices to show that if $F\in F_n(\tau)$ contains $\sigma$, then so does $\beta(F)$. To see this, choose vertices $v_1,\dots,v_m$ in $F$ such that $v_i$ is a strict ancestor of $v_j$ if $i<j$ and the labels of the vertices $v_1,\dots,v_m$ are in the same relative order as $\sigma$. Note that $v_i$ is not special for all $1\le i\le m-1$, since each such vertex has a label smaller than that of $v_m$. Thus, each of these vertices has the same label in $\beta(F)$ as in $F$.

Now during the construction of $\beta(F)$ from $F$, we see that the label of the vertex $v_m$ does not decrease while we antishuffle each strict special ancestor of $v_m$, finally achieving some label $x$ after we antishuffle the special strict ancestor of $v_m$ that is closest to $v_m$. Then for the rest of the construction of $\beta(F)$, the label $x$ stays in the subtree rooted at $v_m$. Combining the corresponding vertex with vertices $v_1,\dots,v_{m-1}$, we find a sequence of vertices of $\beta(F)$ whose labels have relative order $\sigma$.
\end{proof}

In the case $\sigma=213$, we also find surjectivity:
\begin{theorem}\label{thm:213-equivalences}
The restriction of $\alpha$ to $F_n(213,\widetilde\tau)$ and the restriction of $\beta$ to $F_n(213,\tau)$ give inverse maps $\alpha\colon F_n(213,\widetilde\tau) \to F_n(213,\tau)$ and $\beta\colon F_n(213,\tau) \to F_n(213,\widetilde\tau)$.
\end{theorem}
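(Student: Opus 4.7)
By Proposition~\ref{prop:forest-wilf-inequality} applied to $\sigma = 213$ (noting $\sigma(3) = 3$), the restriction of $\alpha$ to $F_n(213, \widetilde\tau)$ is an injection into $F_n(213, \tau)$. Since $\alpha$ and $\beta$ are mutually inverse bijections between $F_n(\widetilde\tau)$ and $F_n(\tau)$ by \cref{thm:westbijection}, it remains to show that $\beta$ sends every $G \in F_n(213, \tau)$ into $F_n(213, \widetilde\tau)$; as $\beta(G)$ automatically avoids $\widetilde\tau$, the sole task is to verify that $\beta$ preserves $213$-avoidance on $F_n(\tau)$.

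The plan is to argue by contradiction: suppose $G \in F_n(213, \tau)$ while $\beta(G)$ contains a $213$-instance at the ancestor chain $v_1 - v_2 - v_3$, with labels $c_2 < c_1 < c_3$ in $\beta(G)$ (and labels $a_1, a_2, a_3$ in $G$); the goal is to produce a $213$-instance in $G$, contradicting $G \in F_n(213)$. The first step is to show that $v_1$ and $v_2$ are non-special in $G$ (equivalently in $\beta(G)$, by \cref{lem:antishuffle}), so that $a_1 = c_1$ and $a_2 = c_2$. Were $v_1$ special, there would exist a $\bar\tau$-instance $u_1, \dots, u_{k-2}, v_1$ in $G$ with $v_1$ as its maximum; appending $v_3$ or $v_2$ would yield a length-$k$ ancestor chain whose pattern is $\tau$ whenever the appended vertex's label exceeds $a_1$, contradicting $G \in F_n(\tau)$. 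This forces $a_2 < a_1$ and $a_3 < a_1$, and then $G \in F_n(213)$ rules out the $213$ ordering for $(a_1, a_2, a_3)$. A careful case analysis on the remaining orderings, combined with extractions of sub-chains of $u_1, \dots, u_{k-2}, v_1, v_2, v_3$ (which yield $\tau$- or $\widetilde\tau$-patterns depending on the position of the $u_j$ labels relative to $a_2, a_3$), rules out the remaining possibilities. The same argument applies to $v_2$.

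Once $v_1, v_2$ are shown non-special, \cref{lem:antishuffle} gives $a_1 = c_1$ and $a_2 = c_2$, so $a_2 < a_1$ in $G$; to complete the $213$-instance in $G$, I locate a descendant $v_3'$ of $v_2$ in $G$ with label exceeding $a_1$. The key observation, mirroring the final step of the proof of Proposition~\ref{prop:forest-wilf-inequality}, is that since no antishuffles occur at $v_1$ or $v_2$ during $\beta$, only antishuffles at strict ancestors of $v_1$ can rearrange labels within $v_2$'s subtree; tracking the label $c_3 > a_1$ backward through these operations shows that $c_3$ already occupies some vertex $v_3'$ in $v_2$'s subtree in $G$. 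Then $v_1 - v_2 - v_3'$ is a $213$-instance in $G$, the desired contradiction. The main obstacle is the case analysis showing $v_1$ and $v_2$ are non-special: the direct extension argument from Proposition~\ref{prop:forest-wilf-inequality} only handles the sub-cases where the appended vertex has label exceeding $a_1$, and the remaining orderings must be excluded by extracting more elaborate sub-chains and jointly invoking the avoidance of both $\tau$ and $213$ in $G$.
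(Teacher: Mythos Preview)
Your proposal has a genuine gap in the central step, and the paper's argument proceeds differently in a way that avoids exactly the difficulty you identify as the ``main obstacle.''

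You try to show that the given vertices $v_1$ and $v_2$ are non-special. This is the wrong target: the paper does \emph{not} prove that $v_1$ is non-special, because it need not be. Instead, working on the $F=\beta(G)$ side, the paper \emph{replaces} $v_1$ by a strict ancestor whenever $v_1$ is special, using that $F$ avoids $\widetilde\tau$ (not that $G$ avoids $\tau$). Concretely, if $u_1,\dots,u_{k-2},v_1$ is a $\bar\tau$-instance in $F$ and $u_i$ has the largest label among $u_1,\dots,u_{k-2}$, then $\widetilde\tau$-avoidance forces the label of $v_2$ in $F$ below that of $u_i$, so one may substitute $u_i$ for $v_1$ and still have a $213$-instance. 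Iterating terminates at a non-special $v_1$. One then further adjusts $v_2$ so that every vertex on the $v_1$--$v_2$ path has label larger than that of $v_1$, which in turn forces $v_2$ to be non-special. Your case analysis cannot close as sketched: for instance with $k=3$, $\tau=123$, an ancestor chain with labels $b,a_1,a_2,a_3$ satisfying $b<a_1$ and $a_3<a_2<a_1$ contains no $123$- or $213$-triple for any position of $b$, so ``more elaborate sub-chains'' from $u_1,\dots,u_{k-2},v_1,v_2,v_3$ alone will not yield a contradiction.

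There is a second problem in your final step. You assert that once $v_1,v_2$ are non-special, ``only antishuffles at strict ancestors of $v_1$ can rearrange labels within $v_2$'s subtree.'' That is false: special vertices strictly between $v_1$ and $v_2$ may exist, and antishuffling any such vertex permutes labels across its whole subtree, potentially moving the label $c_3$ out of $v_2$'s subtree. The paper handles this by its extra normalization of $v_2$ (all path labels above that of $v_1$) and then tracks a fixed \emph{vertex} $v_3'$ forward through $\alpha$, arguing that its label stays above that of $v_1$ at each shuffle along the $v_1$--$v_2$ path; your backward label-tracking does not have an analogue of this control.
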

\begin{remark}
Note that the only nontrivial forest-Wilf equivalence the theorem gives is in the case
\[\tau=12\cdots (k-1)k\quad\text{and}\quad\widetilde\tau=12\cdots k(k-1).\]
In the case $k=3$, we have already seen in \cref{subsubsec:213-123-recurrence} that $f_n(213,123)$ and $f_n(213,132)$ satisfy the same recurrence.
\end{remark}
\begin{proof}
After using \cref{prop:forest-wilf-inequality}, it suffices to show that if $F\in F_n(\widetilde\tau)$ contains $213$, then so does $\alpha(F)$. Define $\bar\tau$ and special vertices in the same manner as in \cref{subsec:west-bijection-proof}.

Suppose $F\in F_n(\widetilde\tau)$ contains $213$. Let $v_1,v_2,v_3$ be an instance of $213$ in $F$, so that $v_1$ is an ancestor of $v_2$, which is an ancestor of $v_3$. If $v_1$ is special, then there exists a sequence $u_1,\dots,u_{k-2},v_1$ establishing that $v_1$ is special. Let $u_i$ have the largest label among $u_1,\dots,u_{k-2}$; since $F\in F_n(\widetilde\tau)$, the label of $v_2$ must be less than the label of $u_i$. Then, we can replace $v_1$ with $u_i$ to get another instance of $213$ with $v_1$ strictly older. This process can only be repeated finitely many times, so we may assume $v_1$ is not special. After fixing this $v_1$, by possibly changing $v_2$, we may assume that all vertices on the shortest path between $v_1$ and $v_2$ have a label that is larger than the label of $v_1$. This then implies that $v_2$ is not special, as otherwise $v_1$ would also be special.

Let $y$ be the label of $v_3$ in $F$. During each step of the construction of $\alpha(F)$, the labels of $v_1$ and $v_2$ remain the same. After we shuffle each special descendant of $v_2$, the label $y$ corresponds to some vertex $v_3'$ in the subtree rooted at $v_2$. By our assumption on $v_2$, we find that as we shuffle each special vertex on the shortest path between $v_1$ and $v_2$, the label of $v_3'$ remains greater than the label of $v_1$. Then, after we shuffle each special ancestor of $v_1$, the relative order of the labels of $v_1,v_2,v_3'$ remains the same. Thus, $\alpha(F)$ contains $213$.
\end{proof}

\section{Forest-shape-Wilf equivalences}\label{sec:forest-shape-wilf-equivalences}
Here we adapt the definitions and methods used regarding shape-Wilf equivalence in \cite{bwx} to define a relation we call \textit{forest-shape-Wilf equivalence}, which is stronger than forest-structure-Wilf equivalence. We then prove \cref{thm:fswe}. For background and motivation, we first define shape-Wilf equivalence.

Let $Y$ be a Young diagram (which we draw using the \textit{English} convention). We view $Y$ as a finite set of ordered pairs $(r,c)$, where $r$ and $c$ are positive integers, such that if $(r,c)\in Y$, then $(r',c')\in Y$ for all positive integers $r',c'$ with $r'\le r$ and $c'\le c$. Visually, the index $r$ corresponds to row number, and $c$ corresponds to column number. A \textit{transversal} $T$ of $Y$ is a labeling of the members of $Y$ with $0$'s and $1$'s such that each row and column contains exactly one $1$; in other words, if $(r_0,1)\in Y$, then $(r_0,c)$ is labeled $1$ for exactly one value of $c$, and if $(1,c_0)\in Y$, then $(r,c_0)$ is labeled $1$ for exactly one value of $r$. See \cref{fig:youngdiagramtransversal} for an example of a transversal.

\begin{figure}
\ytableausetup{centertableaux,boxsize=2em}
\begin{ytableau}
 0 & 0 & 0 & 0 & 0 & 0 & 1 & 0 \\
 0 & 0 & 0 & 0 & 0 & 0 & 0 & 1\\
 0 & 0 & 0 & 0 & 1 & 0 & 0\\
 1 & 0 & 0 & 0 & 0 & 0 \\
 0 & 0 & 0 & 1 & 0 & 0 \\
 0 & 0 & 0 & 0 & 0 & 1 \\
 0 & 0 & 1 & 0 \\
 0 & 1
\end{ytableau}
\caption{A transversal of a Young diagram.}
\label{fig:youngdiagramtransversal}
\end{figure}

Let $k$ be a positive integer, and let $M$ be a permutation matrix of size $k\times k$. We adopt standard matrix conventions, so that row number increases downward and column number increases to the right. We say that a transversal $T$ of the Young diagram $Y$ \textit{contains} $M$ if there exist positive integers $r_1<\cdots<r_k$ and $c_1<\cdots<c_k$ such that for all $1\le i,j\le k$, we have $(r_i,c_j)\in Y$ and that the label of $(r_i,c_j)$ equals the entry in the $i$th row and $j$th column of $M$. Otherwise, we say $Y$ \textit{avoids} $M$. For example, the transversal shown in \cref{fig:youngdiagramtransversal} avoids the matrix
\[\begin{bmatrix}0&0&1\\0&1&0\\1&0&0\end{bmatrix}.\]
But if we add the cell $(4,7)$ to the Young diagram and label it $0$, then the transversal will contain that matrix.

Two sets $\mathcal{M}$ and $\mathcal{M}'$ of permutation matrices (not necessarily all the same size) are \textit{shape-Wilf equivalent} if for all Young diagrams $Y$, the number of transversals avoiding $\mathcal{M}$ equals the number of transversals avoiding $\mathcal{M}'$ \cite{bwx}.

Our definition of forest-shape-Wilf equivalence is a natural generalization of shape-Wilf equivalence, in much the same way that forest-Wilf equivalence is a generalization of Wilf equivalence (however, forest-shape-Wilf equivalence is not necessarily stronger than shape-Wilf equivalence; see \cref{rmk:forestshapevsshape}). But we must first define an analog of Young diagrams.

\begin{definition}
Given a rooted forest $F$ (without numerical labels), we define a \textit{forest-Young diagram} on $F$ to be a finite set $Y$ of ordered pairs each of the form $(r,v)$, where $r$ is a positive integer and $v$ is a vertex of $F$. We require that $Y$ satisfy the following three conditions:
\begin{itemize}
    \item $(1,v)\in Y$ for all vertices $v$.
    \item If $(r,v)\in Y$, then $(r',v)\in Y$ for all integers $r'$ such that $1\le r'\le r$.
    \item If $(r,v)\in Y$, then $(r,v')\in Y$ for all descendants $v'$ of $v$.
\end{itemize}
\end{definition}

\begin{remark}\label{rmk:leaf/root}
In the third condition, we made the decision to write \textit{descendants} instead of \textit{ancestors}. We could have chosen ancestors, but the definition using descendants works for our purposes in this paper. One may refer to this version as \textit{leaf-heavy forest-Young diagrams}, and the flipped version as \textit{root-heavy forest-Young diagrams}.
\end{remark}

Note that in the case that the underlying forest $F$ is just a path with $n$ vertices, forest-Young diagrams on $F$ are equivalent to Young diagrams with $n$ columns.

We can visualize $Y$ as a set of cells in three-dimensional space, where each vertex $v$ lies above its own \textit{column} of cells, with $r$ the \textit{row number} of the cell $(r,v)$. Here, for a fixed vertex $v_0$, the set of ordered pairs in $Y$ of the form $(r,v_0)$ is the column corresponding to $v_0$. Similarly, for a fixed row number $r_0$, the set of ordered pairs in $Y$ of the form $(r_0,v)$ is the $r_0$th \textit{row}. Note that the conditions dictate that the columns are top-aligned. In keeping with the visualization, we say the cell $(r,v)$ is \textit{above} the cell $(r',v')$ if $r<r'$, and \textit{below} the cell $(r',v')$ if $r>r'$. Similarly, we say $(r,v)$ is \textit{younger} than $(r',v')$ if $v$ is a strict descendant of $v'$, and $(r,v)$ is \textit{older} than $(r',v')$ if $v$ is a strict ancestor of $v'$. For an example of a forest-Young diagram, see \cref{fig:forestyoungdiagramexample}.

\begin{figure}
\includegraphics[height=150pt]{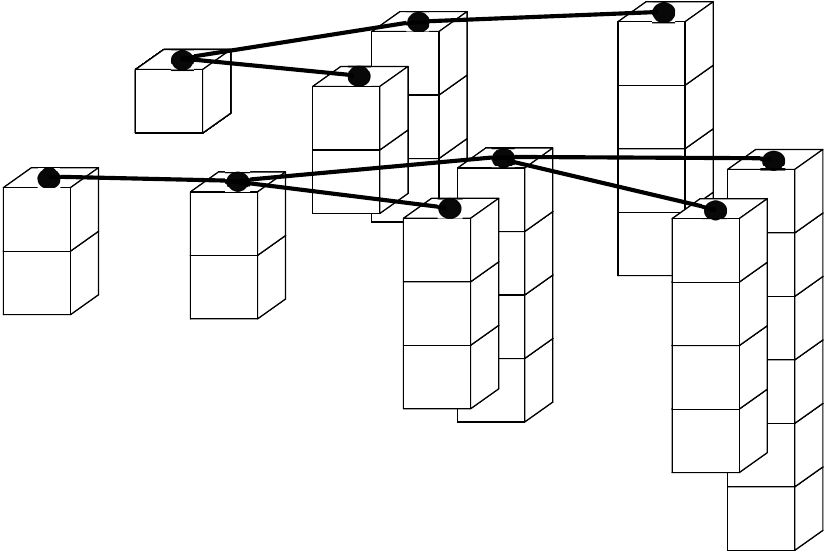}
\caption{A forest-Young diagram.}
\label{fig:forestyoungdiagramexample}
\end{figure}

\begin{definition}
A \textit{transversal} $T$ of a forest-Young diagram $Y$ is a labeling of the members of $Y$ with $0$'s and $1$'s such that each (nonempty) row and column contains exactly one $1$. Let $S_Y$ be the set of all transversals of $Y$.
\end{definition}
If the forest underlying $Y$ contains $n$ vertices, then there will be exactly $n$ cells labeled $1$ in a transversal of $Y$. So, we can think of a transversal of $Y$ as a sort of ``generalized labeling'' of the vertices of the forest $F$, where each vertex $v$ of $F$ is ``labeled'' with the unique $r$ such that $(r,v)$ is labeled $1$ in $Y$. Note that forest-Young diagrams do not necessarily always contain transversals; for instance, it is necessary that $r\le n$ for all $(r,v)\in Y$, though this is in general not sufficient.

\begin{definition}\label{def:forest-young-transversal-containment}
Let $M$ be a permutation matrix of size $k\times k$. A transversal $T$ of the forest-Young diagram $Y$ is said to \textit{contain} the matrix $M$ if there exists a sequence $v_1,\dots,v_k$ of vertices of $F$ and a sequence $r_1<\cdots<r_k$ of row indices such that the following conditions hold:
\begin{itemize}
    \item The vertex $v_i$ is a strict ancestor of $v_j$ if $i<j$.
    \item For all $1\le i,j\le k$, we have $(r_i,v_j)\in Y$.
    \item For all $1\le i,j\le k$, the label of $(r_i,v_j)$ equals the entry in the $i$th row and $j$th column of $M$.
\end{itemize}
Such a collection of cells $\{(r_i,v_j)\}$ is an \textit{instance} of $M$ in $T$. If $T$ does not contain $M$, we say $T$ \textit{avoids} $M$.
\end{definition}
Note the significance of the second condition: each $1$ \textit{and} each $0$ of $M$ must correspond to some cell in $Y$.

Given a forest-Young diagram $Y$ and a set $\mathcal{M}$ of permutation matrices $M$, let $S_Y(\mathcal{M})$ be the set of all transversals of $Y$ that avoid each $M\in\mathcal{M}$. For convenience, if $M$ is a single permutation matrix, we often write $S_Y(M)$ in place of $S_Y(\{M\})$.

\begin{definition}
We say two sets $\mathcal{M}$ and $\mathcal{M}'$ of permutation matrices are \textit{forest-shape-Wilf equivalent} if $|S_Y(\mathcal{M})|=|S_Y(\mathcal{M}')|$ for all forest-Young diagrams $Y$.
\end{definition}

\begin{remark}
In accordance with \cref{rmk:leaf/root}, our definition of forest-shape-Wilf equivalence may perhaps more properly be referred to as \textit{leaf-heavy-forest-shape-Wilf equivalence}, as we are using leaf-heavy forest-Young diagrams. It is possible that some sets of matrices are instead \textit{root-heavy-forest-shape-Wilf equivalent}, but we do not currently know any examples of such equivalences.
\end{remark}

We identify a permutation $\sigma=\sigma(1)\cdots\sigma(k)\in\mathcal{S}_k$ with the $k\times k$ permutation matrix $M$, where the entry in the $i$th row and $j$th column of $M$ equals $\delta_{i,n+1-\sigma(j)}$. This allows us to keep the resemblance with the ``shape'' of $\sigma$, which is the convention adopted in \cite{sw}. Note this differs from the convention in \cite{bwx}. In view of this correspondence between permutations and permutation matrices, we note the following:

\begin{lemma}
Forest-shape-Wilf equivalence implies forest-structure-Wilf equivalence.
\end{lemma}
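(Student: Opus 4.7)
The plan is to show that for any unlabeled rooted forest $F$ with $n$ vertices, the set of $[n]$-labelings of $F$ can be identified with the transversal set of a particular forest-Young diagram on $F$ in a way that preserves pattern-containment. Once such a correspondence is established, a forest-shape-Wilf equivalence between sets of matrices will translate directly into an equality of labeling-counts avoiding the corresponding sets of permutations, which is precisely forest-structure-Wilf equivalence.

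Explicitly, I would associate to $F$ the \emph{rectangular} forest-Young diagram $Y_F = \{(r,v) : 1 \le r \le n,\ v \in V(F)\}$, i.e., the forest-Young diagram in which every column has the maximum possible height $n$; the three defining conditions are immediate. In a transversal $T$ of $Y_F$, each column $v$ contains a unique row $r_v$ with $(r_v,v)$ labeled $1$, and the transversal condition is equivalent to $v \mapsto r_v$ being a bijection $V(F) \to [n]$. I would then define a map $T \mapsto \lambda_T$ by $\lambda_T(v) = n+1 - r_v$, which is a bijection between $S_{Y_F}$ and the set of labelings of $F$ by $[n]$. The next step is to check that for every permutation $\sigma \in \mathcal{S}_k$, the transversal $T$ contains the matrix $M_\sigma$ if and only if $\lambda_T$ contains $\sigma$: an instance of $M_\sigma$ at vertices $v_1,\dots,v_k$ (with $v_i$ a strict ancestor of $v_j$ for $i<j$) and rows $r_1<\cdots<r_k$ forces, by the one-$1$-per-column property of a transversal, the $1$ in column $v_j$ to lie in row $r_{k+1-\sigma(j)}$, so $\lambda_T(v_j) = n+1 - r_{k+1-\sigma(j)}$; the monotonicity $r_1 < \cdots < r_k$ then makes the labels $\lambda_T(v_1),\dots,\lambda_T(v_k)$ lie in the same relative order as $\sigma$. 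Conversely, any instance of $\sigma$ in $\lambda_T$ produces the required rows by sorting the chosen labels and taking complements.

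Given this correspondence, suppose $\{\sigma_1,\dots,\sigma_m\}$ and $\{\sigma_1',\dots,\sigma_{m'}'\}$ are two sets of patterns whose associated matrix sets are forest-shape-Wilf equivalent. For any rooted forest $F$, the number of labelings of $F$ avoiding the first set equals $|S_{Y_F}(\{M_{\sigma_i}\})|$, which by forest-shape-Wilf equivalence equals $|S_{Y_F}(\{M_{\sigma_j'}\})|$, and hence equals the number of labelings of $F$ avoiding the second set. This is exactly the definition of forest-structure-Wilf equivalence. The one point requiring care is the indexing convention: since $M_\sigma$ places larger $\sigma$-values nearer the top while row indices increase downward, the labeling must be read as $\lambda_T(v) = n+1 - r_v$ rather than $r_v$, and the routine but delicate check is that this single convention simultaneously makes pattern-containment correspond in both directions.
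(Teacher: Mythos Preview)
Your proposal is correct and follows essentially the same approach as the paper: specialize to the rectangular forest-Young diagram $Y_F=\{(r,v):1\le r\le n,\ v\in V(F)\}$ and identify transversals with labelings. The paper's proof is a two-line sketch that simply says ``let the row number $r$ correspond to the label of $v$,'' whereas you carry out the bookkeeping more carefully; in particular your observation that one must take $\lambda_T(v)=n+1-r_v$ (rather than $r_v$) to make $M_\sigma$-containment match $\sigma$-containment directly is a genuine refinement of the paper's sketch, since with the paper's convention $\lambda(v)=r_v$ one instead gets $\sigma^c$-containment (which still suffices, via the complement bijection on labelings, but is less clean).
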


\begin{proof}
Fix a forest $F$ on $[n]$ and apply the definition of forest-shape-Wilf equivalence to the forest-Young diagram $Y$ that consists of all $n^2$ ordered pairs of the form $(r,v)$, where $1\le r\le n$ and $v$ is a vertex of $F$. We let the row number $r$ of a member $(r,v)$ of a transversal correspond to the label of $v$ in the forest.
\end{proof}

\begin{remark}\label{rmk:forestshapevsshape}
One cannot find a similarly easy proof that forest-shape-Wilf equivalence implies shape-Wilf equivalence. However, one can show that if $\mathcal{M},\mathcal{M}'$ are forest-shape-Wilf equivalent, then their \textit{reverses}, obtained by ``reversing'' each matrix, are shape-Wilf equivalent.
\end{remark}

We are now in a position to prove \cref{thm:fswe}. The result follows easily from \cref{prop:i2j2,prop:blocks} below. We prove these using the methods established in \cite{bwx}, specializing to the $t=2$ case. Both proofs closely follow the proofs of analogous results in \cite{bwx}. The difference here is that we work with forests and forest-Young diagrams instead of Young diagrams.

First, let $I_2$ and $J_2$ be the following matrices:
\[I_2=\begin{bmatrix}1&0\\0&1\end{bmatrix},\qquad J_2=\begin{bmatrix}0&1\\1&0\end{bmatrix}.\]

\begin{proposition}\label{prop:i2j2}
The permutation matrices $I_2$ and $J_2$ are forest-shape-Wilf equivalent.
\end{proposition}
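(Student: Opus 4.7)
The plan is to construct an explicit bijection $\phi_Y\colon S_Y(I_2) \to S_Y(J_2)$ for each forest-Young diagram $Y$, adapting the $t = 2$ case of the argument of Backelin--West--Xin \cite{bwx}. First I would encode a transversal $T$ of $Y$ as an injective function $r_T\colon V(F) \to \mathbb{Z}_{>0}$ whose image is the set of rows appearing in $Y$ and which satisfies $r_T(v) \le h_v$ for every $v$, where $h_v$ is the height of the column of $v$ in $Y$. Under this encoding, $T \in S_Y(J_2)$ if and only if $r_T(u) < r_T(v)$ for every pair $(u, v)$ with $u$ a strict ancestor of $v$, while $T \in S_Y(I_2)$ if and only if, for every such pair, $r_T(u) < r_T(v)$ implies $r_T(v) > h_u$. (The asymmetry comes from the fact that in \cref{def:forest-young-transversal-containment} we require every cell of the matrix, including its $0$'s, to correspond to a cell of $Y$: for $J_2$ this is automatic, while for $I_2$ it forces the extra condition $r_T(v) \le h_u$.)

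I would then build $\phi_Y$ by induction on $|V(F)|$. For the inductive step, I fix a leaf $v^*$ of $F$ and let $Y^*$ be the forest-Young diagram on $F \setminus \{v^*\}$ obtained by deleting the column of $v^*$. A transversal of $Y$ corresponds to the choice of a row $r^* \in \{1, \dots, h_{v^*}\}$ (to be assigned to $v^*$) together with a transversal of $Y^*$ whose row-image avoids $r^*$. Applying a strengthened version of the inductive hypothesis, parameterized by the excluded row, gives a bijection on the smaller $Y^*$-transversals, which I would extend to $\phi_Y$ by prescribing how the value $r^*$ is adjusted on each side.

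The main obstacle will be the asymmetry of the constraints on $r^*$: on the $J_2$-avoiding side, $r^*$ must exceed $r_T(u)$ for every strict ancestor $u$ of $v^*$, while on the $I_2$-avoiding side, for every such $u$ one must have $r^* < r_T(u)$ or $r^* > h_u$. To reconcile this, I plan to use a local swap operation along the ancestor chain from $v^*$ up to the root of its tree, an analogue of the jeu-de-taquin-type operation used in the $t = 2$ case of \cite{bwx}: iteratively swap $r^*$ with the row of the nearest strict ancestor whenever doing so keeps the transversal inside $Y$ and is consistent with the target avoidance. Verifying that this swap is well-defined (in particular, that the swapped transversal remains valid in $Y$), that it preserves the target avoidance globally rather than just at the pair involving $v^*$, and that it yields an invertible map will be the key technical step. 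I expect this to require tracking the row constraints along the entire ancestor chain simultaneously, which is what necessitates the strengthening of the inductive hypothesis above.
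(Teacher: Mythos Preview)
Your outline takes a genuinely different route from the paper. The paper adapts Backelin--West--Xin directly at the level of transversals: given a transversal of $Y$ containing $I_2$, it locates one specific instance (the one whose lower-right $1$ is as high as possible, and then whose upper-left $1$ is as young as possible), swaps those four cells into a $J_2$, and iterates until no $I_2$ remains; a symmetric iterated map goes the other way. Two short lemmas (\cref{lem:phi,lem:psi}) control which new instances can appear after a single swap, and these force the iterated maps to be mutual inverses. There is no induction on $|V(F)|$, no leaf deletion, and no unstated ``strengthened hypothesis''---the whole argument happens on the fixed diagram $Y$.

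Your induction-on-leaves plan is plausible in spirit, but as written it has real gaps. First a small technical point: once you delete the column of a leaf $v^*$, the row $r^*$ is in general still nonempty in $Y^*$, so ``a transversal of $Y^*$ whose row-image avoids $r^*$'' is not a transversal of $Y^*$ at all. What you actually obtain is a transversal of the diagram got by deleting both the column of $v^*$ \emph{and} row $r^*$ (with a row shift), and that smaller diagram depends on $r^*$. So the inductive bijection you invoke lives on a different shape for each choice of $r^*$, and you owe an explanation of why these bijections cohere with your subsequent adjustment of $r^*$.

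More seriously, you never state the ``strengthened inductive hypothesis'' or the precise swap rule, and the difficulty you flag is exactly where the argument can fail. Your swap is local to the ancestor chain of $v^*$, but instances of $I_2$ and $J_2$ are not: an ancestor $u$ of $v^*$ typically has descendants off that chain, and swapping $r^*$ with $r_T(u)$ can create a forbidden instance between $u$ and one of those other descendants. Nothing in a chain-local operation sees this. Until you write down the actual statement you are inducting on and verify that your swap preserves it globally, the proposal is an outline with its hard step missing. If you want to push this approach through, you will need a hypothesis that records, for each vertex, compatibility data with \emph{all} of its descendants, not just those along a single root-to-leaf path---at which point you may find the paper's iterated-swap argument is the cleaner packaging of the same idea.
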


Before giving the proof, we first define two operations on transversals of a forest-Young diagram $Y$ and discuss their key properties.
\begin{definition}
Suppose $L$ is a transversal of $Y$ that contains $I_2$. Let $a_2$ be the highest cell labeled with $1$ such that $L$ contains an instance of $I_2$ in which $a_2$ is the lower $1$. Let $a_1$ be the youngest cell labeled with $1$ such that $L$ contains an instance of $I_2$ with $1$'s at $a_1$ and $a_2$, with $a_2$ being the lower $1$. Let $b_1,b_2$ be the two cells labeled $0$ in the instance of $I_2$ containing $a_1,a_2$, such that $b_1$ is lower and older than $b_2$. Relabel the cells $a_1,a_2,b_1,b_2$ with $0$'s and $1$'s so that they now form an instance of $J_2$. Let $\phi(L)$ be the resulting transversal.
\end{definition}

The following lemma gives some important properties of $\phi$.

\begin{lemma}\label{lem:phi}
Suppose the transversal $L$ contains an instance of $I_2$. Then, $\phi(L)$ does not contain an instance of $I_2$ with each cell lying above $a_2$. If in addition $L$ contains no $J_2$ with its lower cells below $a_2$, then neither does $\phi(L)$.
\end{lemma}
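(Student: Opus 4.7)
I plan to prove both claims by contradiction, exploiting the extremality of $a_1$ and $a_2$ and the fact that $\phi$ only alters labels at the four cells $a_1 = (r_1, v_1)$, $a_2 = (r_2, v_2)$, $b_1 = (r_2, v_1)$, and $b_2 = (r_1, v_2)$, all of which lie in row $r_1$ or row $r_2$. The geometric ingredient will be the two axioms of forest-Young diagrams: columns are downward-closed starting from row $1$, and descendant columns extend at least as far as ancestor columns.

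For the first statement, I would suppose $\phi(L)$ contains an $I_2$ instance with all cells lying above $a_2$, on vertices $w_1$ (a strict ancestor of $w_2$) and at rows $s_1 < s_2 < r_2$, with $1$'s at $(s_1, w_1)$ and $(s_2, w_2)$. Since $s_2 < r_2$, the only cells in the instance that could have changed labels are $a_1$ or $b_2$, both in row $r_1$. If no cell changed, then the same $I_2$ instance sits inside $L$, contradicting the choice of $a_2$ as the highest $1$ serving as the lower $1$ of some $I_2$. Otherwise $r_1 \in \{s_1, s_2\}$, and the casework splits on which of $a_1, b_2$ appears and in which of the two rows. In each case I aim to exhibit either an $I_2$ instance in $L$ whose lower $1$ lies strictly above $a_2$ (contradicting $a_2$'s extremality) or an $I_2$ instance in $L$ whose lower $1$ equals $a_2$ but whose upper $1$ sits at a strict descendant of $v_1$ (contradicting $a_1$'s extremality). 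The delicate subcase is when the new upper $1$ is $b_2 = (r_1, v_2)$: here, pairing the unchanged $(s_2, w_2)$ with $a_1$ produces an $I_2$ instance in $L$, and one uses $(r_2, v_1) = b_1 \in Y$ together with column downward closure to guarantee that the off-diagonal cell $(s_2, v_1)$ lies in $Y$.

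For the second statement, I would assume $L$ has no $J_2$ with lower $1$ at a row below $a_2$ and suppose $\phi(L)$ contains such a $J_2$, with rows $t_1 < t_2$ (where $t_2 > r_2$), vertices $u_1$ strict ancestor of $u_2$, and $1$'s at $(t_1, u_2)$ and $(t_2, u_1)$. Since $t_2$ exceeds every row containing a changed cell, the lower row is identical in $L$ and $\phi(L)$, so all discrepancies live in the upper row. If $t_1 \notin \{r_1, r_2\}$, the same $J_2$ already lies inside $L$, contradicting the hypothesis. Otherwise $t_1 \in \{r_1, r_2\}$, and in each of the resulting subcases I plan to construct a $J_2$ inside $L$ whose lower $1$ is still $(t_2, u_1)$: typically by pairing $(t_2, u_1)$ with either $a_2 = (r_2, v_2)$ (when $u_1$ is a strict ancestor of $v_2$) or $a_1 = (r_1, v_1)$ (when $u_1$ is a strict ancestor of $v_1$), and using descendant extension applied to $(t_2, u_1) \in Y$ to verify that the resulting rectangle of cells lies in $Y$. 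A secondary subcase, where the vertex comparability between $u_1$ and $v_1$ could a priori yield either $I_2$ or $J_2$, is handled by observing that if $u_1$ is a strict descendant of $v_1$ one still gets a $J_2$ in $L$ (using $a_2$), while if $u_1$ is a strict ancestor of $v_1$ one uses $a_1$.

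The main obstacle I anticipate is the per-case verification that every cell of each constructed instance in $L$ actually lies in $Y$. In the classical Young-diagram proof of Backelin--West--Xin this is an easy rectangle check, but forest-Young diagrams have columns of varying length across branches, and each subcase will require the right combination of column downward-closure (often supplied by $b_1 = (r_2, v_1) \in Y$) and descendant-extension (often supplied by the fact that $(t_2, u_1)$ or $(s_2, w_2)$ is in $Y$) to conclude.
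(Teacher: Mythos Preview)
Your proposal is correct and follows essentially the same approach as the paper: both argue by contradiction, observe that any offending instance must involve one of the newly created $1$'s ($b_2$ for the first statement, $b_1$ or $b_2$ for the second), and then pair the remaining unchanged $1$ with $a_1$ or $a_2$ to produce a contradiction with the extremality of $a_2$ or $a_1$. The paper is slightly more streamlined in that it identifies the relevant new $1$ directly rather than routing through a row-based case split, and in the second statement it simply pairs $d_1$ with $a_2$ in both cases (your alternative of sometimes using $a_1$ works but is unnecessary, since $u_1$ is always a strict ancestor of $v_2$).
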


\begin{proof}
For the first statement, suppose otherwise. Let the $1$'s in the potential instance of $I_2$ correspond to cells $c_1$ and $c_2$, where $c_1$ is higher than $c_2$. Note that either $c_1$ or $c_2$ must equal $b_2$, as otherwise we contradict our choice of $a_2$. But $c_1\neq b_2$, as otherwise we again contradict our choice of $a_2$. Thus, $c_2=b_2$, and $c_1$ is older than $a_2$. But $c_1$ being younger than $a_1$ contradicts the choice of $a_1$, and $c_1$ being older than $a_1$ contradicts the choice of $a_2$ (where we must be careful to check each of the three conditions for our definition of containment in all cases). This proves the first statement.

We also prove the second statement via contradiction. Suppose now that $L$ contains no instance of $J_2$ with its lower cells below $a_2$, but $\phi(L)$ does. Let the $1$'s in this instance of $J_2$ in $\phi(L)$ correspond to cells $d_1$ and $d_2$, where $d_1$ is lower than $d_2$ and $a_2$. We must have $d_2=b_1$ or $d_2=b_2$, or we contradict our assumption. But in either case, the labels of $1$ on cells $d_1$ and $a_2$ in $L$ yield an instance of a valid $J_2$, which is again a contradiction. Here and later, by saying \textit{valid} we emphasize that each entry of the matrix $J_2$ (including the zero entries) corresponds to the label of some cell of $Y$, in the sense of the second condition in \cref{def:forest-young-transversal-containment}; this becomes more significant for containing instances of $I_2$, where the lower-left entry of the matrix is zero.
\end{proof}

We now define our second operation on transversals.
\begin{definition}
Let $T$ be a transversal of $Y$ that contains an instance of $J_2$. Let $b_1$ be the lowest cell labeled $1$ such that $T$ contains an instance of $J_2$ in which $b_1$ is the lower $1$. Then, let $b_2$ be the lowest cell labeled $1$ such that $T$ contains an instance of $J_2$ with $1$'s at $b_1$ and $b_2$, with $b_1$ being the lower $1$. Let $a_1,a_2$ be the two cells labeled $0$ in the instance of $J_2$ containing $b_1,b_2$, such that $a_1$ is higher and older than $a_2$. Relabel the cells $a_1,a_2,b_1,b_2$ with $0$'s and $1$'s so that they now form an instance of $I_2$. Let $\psi(T)$ be the resulting transversal.
\end{definition}

Analogously, we have the following lemma addressing $\psi$.

\begin{lemma}\label{lem:psi}
Suppose the transversal $T$ contains an instance of $J_2$. Then, $\psi(T)$ does not contain an instance of $J_2$ with its lower cells below $a_2$. If in addition $T$ contains no $I_2$ with each cell lying above $a_2$, then neither does $\psi(T)$.
\end{lemma}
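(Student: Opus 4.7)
The plan is to mirror the proof of \cref{lem:phi}, with the roles of $I_2$ and $J_2$ interchanged and the spatial notions above/below and older/younger swapped accordingly. Throughout I write $a_1 = (r_1,v_1)$, $a_2 = (r_2,v_2)$, $b_1 = (r_2,v_1)$, and $b_2 = (r_1,v_2)$ for the cells of the original $J_2$ instance used to define $\psi$, where $r_1 < r_2$ and $v_1$ is a strict ancestor of $v_2$. Both statements are proved by contradiction.

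For the first statement, I assume that $\psi(T)$ contains a $J_2$ whose lower cells lie strictly below $a_2$, and denote the lower-older and upper-younger $1$'s of this instance by $f_1$ and $f_2$. Since the only cells $\psi$ relabels occupy rows $r_1$ or $r_2$ while $f_1$'s row exceeds $r_2$, $f_1$ must already be labeled $1$ in $T$. I then split into three sub-cases based on $f_2$ in $T$: (i) $f_2$ is an original $1$ of $T$; (ii) $f_2 = a_1$; or (iii) $f_2 = a_2$. In (i), the pair $(f_1, f_2)$ inherits the $J_2$ structure from $\psi(T)$ directly in $T$, and its lower $1$ lies strictly below $b_1$, contradicting the maximality of $b_1$'s row. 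In (ii) and (iii), writing $f_1 = (r', v')$, the ancestry requirement of the $J_2$ in $\psi(T)$ makes $v'$ a strict ancestor of $v_2$ (since $f_2$'s vertex is $v_1$ or $v_2$, each an ancestor of $v_2$); then the pair $(f_1, b_2)$ in $T$ forms a valid $J_2$ once the zero cells $(r_1, v')$ and $(r', v_2)$ are confirmed to lie in $Y$ via the top-alignment and descendant conditions, and again its lower $1$ $f_1$ lies strictly below $b_1$, contradicting $b_1$'s maximality.

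For the second statement, I assume additionally that $T$ contains no $I_2$ with each cell above $a_2$, yet $\psi(T)$ does, and denote the upper-older and lower-younger $1$'s of this $I_2$ by $e_1$ and $e_2$. Every cell lies in a row smaller than $r_2$, so $a_2$ and $b_1$ (both in row $r_2$) cannot equal $e_1$ or $e_2$; hence each of $e_1, e_2$ is either an original $1$ of $T$ or equals $a_1$. If both are original, they form an $I_2$ in $T$ above $a_2$, contradicting the hypothesis. If $e_2 = a_1$, then writing $e_1 = (s_1, v_0)$, the vertex $v_0$ is a strict ancestor of $v_1$; the pair $(e_1, b_2)$ in $T$ forms a valid $I_2$ above $a_2$, where the zero cell $(r_1, v_0)$ lies in $Y$ because it already appears as a cell of the $\psi(T)$-instance, again contradicting the hypothesis. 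If $e_1 = a_1$, then writing $e_2 = (s, w)$, we have $r_1 < s < r_2$ and $w$ a strict descendant of $v_1$; the pair $(b_1, e_2)$ in $T$ forms a valid $J_2$ with $b_1$ as its lower $1$ and $e_2$ as its upper $1$, so $e_2$'s row $s > r_1$ contradicts the maximality of $b_2$'s row.

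The main obstacle in each sub-case is verifying the second condition of \cref{def:forest-young-transversal-containment}---that every one of the four cells (including the zero cells) of the purported new $J_2$ or $I_2$ lies in $Y$. These containments are justified by combining the top-alignment of columns in $Y$, the descendant closure property, and, in the case $e_2 = a_1$ above, the fact that the cell $(r_1, v_0)$ is already known to lie in $Y$ as part of the $\psi(T)$-instance.
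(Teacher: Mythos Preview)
Your proof is correct and follows essentially the same approach as the paper's own proof: both arguments proceed by contradiction, identify that at least one of the $1$'s in the offending instance must be one of the newly relabeled cells ($a_1$ or $a_2$), and then in each sub-case exhibit a valid $J_2$ or $I_2$ in $T$ (using $b_1$ or $b_2$ as the replacement cell) that contradicts either the extremality of $b_1$, the extremality of $b_2$, or the hypothesis on $T$. Your write-up is somewhat more explicit than the paper's in verifying the zero cells lie in $Y$, and you swap the names $e_i$ and $f_i$ between the two statements relative to the paper, but the logical structure is identical.
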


\begin{proof}
Again, we prove each statement by contradiction. For the first statement, suppose $\psi(T)$ does contain such an instance of $J_2$, with the $1$'s corresponding to cells $e_1$ and $e_2$, where $e_1$ is lower than $e_2$ and $a_2$. We must have $e_2=a_1$ or $e_2=a_2$. In either case, the labels of $1$ on cells $e_1$ and $b_2$ in $T$ give us a valid instance of $J_2$, which contradicts our choice of $b_1$.

For the second statement, suppose $\psi(T)$ contains an instance of $I_2$ with each cell lying above $a_2$. Let the $1$'s of this $I_2$ correspond to cells $f_1$ and $f_2$, where $f_1$ is higher than $f_2$. Note we must have $f_1=a_1$ or $f_2=a_1$. In the first case, the $1$'s at $b_1$ and $f_2$ in $T$ yield a valid instance of $J_2$, contradicting our choice of $b_2$. In the second case, the $1$'s at $f_1$ and $b_2$ yield a valid instance of $I_2$ in $T$ with each cell lying above $a_2$, another contradiction.
\end{proof}

Now we can define the maps $\theta,\eta$ that will become our inverse maps for proving \cref{prop:i2j2}.

\begin{definition}\label{def:i2j2-theta-eta}
Define the map $\theta\colon S_Y(J_2)\to S_Y(I_2)$ as follows: given a transversal $L\in S_Y(J_2)$, let $\theta(L)$ be the result of iteratively applying $\phi$ to $L$ until the resulting transversal avoids $I_2$. Similarly, define the map $\eta\colon S_Y(I_2)\to S_Y(J_2)$ by iteratively applying $\psi$ to $T\in S_Y(I_2)$ until the resulting transversal avoids $J_2$.
\end{definition}
Note that the process defining $\theta$ must terminate, since by the first statement of \cref{lem:phi}, at each iteration of $\phi$, the cell $a_2$ strictly increases its row number (gets lower) or keeps the same row number but gets older. Similarly, the process defining $\eta$ also terminates, since using \cref{lem:psi} we see that at each iteration of $\psi$, the cell $b_1$ strictly decreases its row number (gets higher) or keeps the same row number but gets younger.

We can now prove \cref{prop:i2j2}.
\begin{proof}[Proof of \cref{prop:i2j2}]
Define $\theta,\eta$ as in \cref{def:i2j2-theta-eta}. It suffices to show that $\eta(\theta(L))=L$ for all $L\in S_Y(J_2)$ and $\theta(\eta(T))=T$ for all $T\in S_Y(I_2)$.

Let $L\in S_Y(J_2)$; we first show that $\eta(\theta(L))=L$. Suppose it takes $N$ applications of $\phi$ to $L$ to reach $\theta(L)$; that is, $\phi^N(L)=\theta(L)$. It suffices to show that for all integers $n$ with $1\le n\le N$, we have $\psi(\phi(\phi^{n-1}(L)))=\phi^{n-1}(L)$. Note that since $L$ avoids $J_2$, we may induct using \cref{lem:phi} to see that for each $1\le n\le N$, before and after applying $\phi$ to $\phi^{n-1}(L)$, there is no instance of $J_2$ with its lower cells below $a_2$. We will show that the instance of $J_2$ created by applying $\phi$ to $\phi^{n-1}(L)$ is the one identified when applying $\psi$ to $\phi^n(L)$. Fix an $n$ with $1\le n\le N$. Choose cells $a_1,a_2,b_1,b_2$ according to when applying $\phi$ to $\phi^{n-1}(L)$. Using the inductive result, we see that $\psi$ chooses the cell $b_1$ correctly (meaning that the choice of $b_1$ corresponds to the instance of $J_2$ identified earlier), as this $b_1$ is a candidate and there are no valid candidates below it. Now suppose $\psi$ chooses its $b_2$ incorrectly, choosing instead some cell $b_2'\neq b_2$ which must be lower than $b_2$ and higher and younger than $b_1$. But note that then the $1$'s at cells $a_1$ and $b_2'$ in $\phi^{n-1}(L)$ yield a valid instance of $I_2$, contradicting the choice of $a_2$. Thus, $\psi$ also chooses $b_2$ correctly.

Let $T\in S_Y(I_2)$. The proof that $\theta(\eta(T))=T$ is similar. Suppose it takes $N'$ applications of $\psi$ to $T$ to reach $\eta(T)$. Again, we show that for $n'$ such that $1\le n'\le N'$, we have that $\phi(\psi(\psi^{n'-1}(T)))=\psi^{n'-1}(T)$. Fix such an $n'$; we show that the instance of $I_2$ created by applying $\psi$ to $\psi^{n'-1}(T)$ is the instance of $I_2$ identified by $\phi$ when applied to $\psi^{n'}(T)$. Choose cells $b_1,b_2,a_1,a_2$ according to when applying $\psi$ to $\psi^{n'-1}(T)$. By a similar inductive argument as above but using \cref{lem:psi}, we see that neither $\psi^{n'-1}(T)$ nor $\psi(\psi^{n'-1}(T))=\psi^{n'}(T)$ contains an instance of $I_2$ with each cell lying above $a_2$; thus the application of $\phi$ chooses $a_2$ correctly. Suppose $\phi$ chooses the cell $a_1$ incorrectly, instead choosing $a_1'$, which must be older and higher than $a_2$ and younger than $a_1$. If $a_1'$ is below $a_1$, then the $1$'s at cells $b_1$ and $a_1'$ form a valid instance of $J_2$ in $\psi^{n'-1}(T)$, contradicting the choice of $b_2$. But if $a_1'$ is above $a_1$, then the $1$'s at $a_1'$ and $b_2$ in $\psi^{n'-1}(T)$ yield a valid instance of $I_2$ lying completely above $a_2$, a contradiction. Thus, the application of $\phi$ chooses $a_1$ correctly as well.
\end{proof}

We now move on to the second proposition required to prove \cref{thm:fswe}.

\begin{proposition}\label{prop:blocks}
Suppose the permutation matrices $C$ and $D$ are forest-shape-Wilf equivalent. Let $\mathcal{A}=\{A_1,\dots,A_m\}$ be a set of permutation matrices. For each $i$ with $1\le i\le m$, define the matrices
\[M_i=\begin{bmatrix}0&C\\A_i&0\end{bmatrix},\qquad M_i'=\begin{bmatrix}0&D\\A_i&0\end{bmatrix}.\]
Then the sets $\mathcal{M}=\{M_1,\dots,M_m\}$ and $\mathcal{M}'=\{M_1',\dots,M_m'\}$ are forest-shape-Wilf equivalent.
\end{proposition}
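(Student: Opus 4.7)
The plan is to adapt the block-matrix argument from Backelin--West--Xin to forest-Young diagrams. Fix a forest-Young diagram $Y$; I will construct an explicit bijection $\Phi\colon S_Y(\mathcal{M})\to S_Y(\mathcal{M}')$. For each transversal $T\in S_Y(\mathcal{M})$, the strategy is to split $T$ into two pieces: an ``$\mathcal{A}$-part'' consisting of those $1$s that participate in the lower-left $A_i$-block of some potential $M_i$-containment, and a ``$C$-part'' consisting of the remaining $1$s, which live in an upper-right sub-forest-Young diagram. I would then invoke the hypothesized bijection between $C$-avoiding and $D$-avoiding transversals on the $C$-part, while leaving the $\mathcal{A}$-part untouched.

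Concretely, I would construct a sub-forest-Young diagram $\tilde Y(T)\subseteq Y$ by an iterative excision. Starting with all of $Y$, as long as there exists a $1$ of $T$ at some cell $(r_0,v_0)$ that cannot serve as any cell of a $C$-block in an $M_i$-containment in $T$ (equivalently, no $\mathcal{A}$-instance among the $1$s of $T$ lies below and ancestor-side of $(r_0,v_0)$ in the arrangement required by the block structure), freeze the $1$ at $(r_0,v_0)$, remove row $r_0$ and the column of $v_0$ from the diagram, and further remove any additional cells needed to restore the three forest-Young-diagram axioms. What remains is $\tilde Y(T)$, and by construction $T|_{\tilde Y(T)}$ is a transversal of $\tilde Y(T)$. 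The two essential claims are then: (i) $T$ avoids $\mathcal{M}$ if and only if $T|_{\tilde Y(T)}$ avoids $C$ as a transversal of $\tilde Y(T)$, since every $1$ in $\tilde Y(T)$ is witnessed by an $\mathcal{A}$-instance below-and-ancestor of it so that a $C$-pattern there would complete an $M_i$-instance; and (ii) $\tilde Y(T)$ together with the frozen portion of $T$ depend only on the $\mathcal{A}$-part, not on the $C$-part. Granted these, the forest-shape-Wilf equivalence of $C$ and $D$ supplies a bijection $\Psi$ between $C$-avoiding and $D$-avoiding transversals of $\tilde Y(T)$; I would set $\Phi(T)$ to be the transversal of $Y$ obtained by combining the frozen part of $T$ with $\Psi(T|_{\tilde Y(T)})$, and check that the analogous construction going from $\mathcal{M}'$ to $\mathcal{M}$ produces its inverse.

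The main obstacle will be making the excision rigorous. Unlike in the Young-diagram setting, where columns are totally ordered and removing one is essentially unconstrained, in a forest-Young diagram each column is attached to a vertex carrying only a partial ancestor-descendant order, so one must specify carefully how deleting a column forces the deletion of certain descendant cells in order to preserve the axioms while keeping $T|_{\tilde Y(T)}$ a valid transversal. A companion delicate point is the invariance claim (ii): one must verify by induction that at every step the choice of which $1$ to freeze next is determined purely by the already-frozen $\mathcal{A}$-data, so that replacing the $C$-pattern by a $D$-pattern at the end leaves $\tilde Y(T)$ and the frozen set unchanged. Once these structural facts are in place, the bijection $\Phi$ and its inverse follow mechanically, and the forest-shape-Wilf equivalence of $\mathcal{M}$ and $\mathcal{M}'$ drops out.
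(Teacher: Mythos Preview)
Your proposal is correct and follows essentially the same strategy as the paper: the paper's proof defines an $(\mathcal{A},L)$-coloring of $Y$ (cells are ``white'' precisely when some $A_i$-instance lies older and lower, then rows and columns of non-white $1$s are recolored), extracts the white cells as a sub-forest-Young diagram $Y_{\mathcal{A},L}$ carrying a transversal $L_{\mathcal{A}}$, and applies the $C$-to-$D$ bijection there---your iterative excision, your claim (i), and your invariance claim (ii) correspond exactly to this construction, to Lemma~\ref{lem:subtransversal-avoidance}, and to Lemma~\ref{lem:L-f(L)-same-coloring} respectively. The only cosmetic difference is that the paper defines the sub-diagram in one shot via the coloring rather than by iterated removal, which sidesteps any worry about order-dependence in the freezing process.
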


The proof comes in a few steps. We first make a preliminary definition:
\begin{definition}\label{def:(A,L)-coloring}
Let $L$ be a transversal of a forest-Young diagram $Y$, and let $\mathcal{A}=\{A_1,\dots,A_m\}$ be a set of permutation matrices. The \textit{$(\mathcal{A},L)$-coloring of $Y$} is a coloring of the cells of $Y$ that is constructed as follows.
\begin{enumerate}
    \item For each cell $(r,v)$ of $Y$, color $(r,v)$ white if the transversal $L$ contains an instance of some $A_i\in\mathcal{A}$ in which each cell is older and lower than $(r,v)$. Otherwise, color $(r,v)$ blue.
    \item For each cell labeled $1$ that is colored blue, color the entire column and row of the cell blue as well.
\end{enumerate}
The white cells in the $(\mathcal{A},L)$-coloring of $Y$ can be naturally reassembled into a forest-Young diagram $Y_{\mathcal{A},L}$, with the remaining $1$'s forming a transversal $L_\mathcal{A}$. Then, $Y_{\mathcal{A},L}$ and $L_\mathcal{A}$ can be naturally viewed as subsets of $Y$ and $L$, respectively.
\end{definition}
We now make precise the construction of $Y_{\mathcal{A},L}$ and $L_\mathcal{A}$. First we note that if the cell $c$ is colored white in Step 1, then so is each cell that is above and younger than $c$. Thus, the induced subgraph of $F$ formed from the set of vertices $v_0$ such that there is some $(r,v_0)\in Y$ colored white in Step 1 forms a rooted forest $F_{1,(\mathcal{A},L)}$. The set of cells colored white in Step 1 then forms a forest-Young diagram $Y_{1,(\mathcal{A},L)}$ above $F_{1,(\mathcal{A},L)}$. So, the set of cells of $Y_{1,(\mathcal{A},L)}$ that are labeled $1$ form a ``partial transversal'' of $Y_{1,(\mathcal{A},L)}$: each row and column contains at most one cell labeled $1$.

Now in Step 2, we color some of the white cells blue: we can first delete those vertices $v_1$ of $F_{1,(\mathcal{A},L)}$ for which there is a blue cell $(r,v_1)\in Y$ labeled $1$, and color all their corresponding cells blue. Then, the remaining vertices can be made to form a rooted forest $F_{2,(\mathcal{A},L)}$, in which for each remaining vertex, it becomes a root if it has no remaining ancestor, and otherwise its closest remaining ancestor from $F_{1,(\mathcal{A},L)}$ becomes its parent. The cells which remain white still form a forest-Young diagram $Y_{2,(\mathcal{A},L)}$ with respect to $F_{2,(\mathcal{A},L)}$.

Finally, for the second part of Step 2, if for row $r_0$ there exists a cell $(r_0,v)\in Y$ labeled $1$ that was colored blue in Step 1, we color each cell in that row blue. This may delete some vertices of $F_{2,(\mathcal{A},L)}$, but since all ancestors of a deleted vertex are also deleted, the remaining vertices form a rooted forest, denoted $F_{\mathcal{A},L}$. Along with the column deletion resulting from deleted vertices of $F_{2,(\mathcal{A},L)}$, this may also delete some rows of $Y_{2,(\mathcal{A},L)}$. So, we shift all remaining cells of $Y_{2,(\mathcal{A},L)}$ upward, re-indexing accordingly. Thus, we obtain a forest-Young diagram $Y_{\mathcal{A},L}$ based on the rooted forest $F_{\mathcal{A},L}$. The cells in $Y_{\mathcal{A},L}$ labeled $1$ necessarily form a transversal $L_\mathcal{A}$ of $Y_{\mathcal{A},L}$. Note we can view $Y_{\mathcal{A},L}$ as a subset of $Y$ and $L_\mathcal{A}$ as a subset of $L$, as claimed.

For the remainder of this section, let $C,D,\mathcal{A}=\{A_1,\dots,A_m\},\mathcal{M}=\{M_1,\dots,M_m\},\mathcal{M}'=\{M_1',\dots,M_m'\}$ be as in the statement of \cref{prop:blocks}, and fix a forest-Young diagram $Y$.

\begin{lemma}\label{lem:subtransversal-avoidance}
Let $L\in S_Y(\mathcal{M})$, and let $Y_{\mathcal{A},L}$ and $L_\mathcal{A}$ be the forest-Young diagram and transversal, respectively, resulting from the $(\mathcal{A},L)$-coloring of $Y$. Then, $L_\mathcal{A}\in S_{Y_{\mathcal{A},L}}(C)$. Similarly, given $T\in S_Y(\mathcal{M}')$, we have $T_\mathcal{A}\in S_{Y_{\mathcal{A},T}}(D)$, where $Y_{\mathcal{A},T}$ and $T_{\mathcal{A}}$ are obtained from the $(\mathcal{A},T)$-coloring of $Y$.
\end{lemma}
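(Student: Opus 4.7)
The plan is to argue by contradiction: I would assume that $L_\mathcal{A}$ contains some instance of $C$ and build from it an instance of some $M_i$ in $L$, contradicting $L\in S_Y(\mathcal{M})$. The first step is to lift the purported $C$-instance from $L_\mathcal{A}$ back to $L$ inside the original $Y$. Passing from $Y$ to $Y_{\mathcal{A},L}$ only renumbers rows while preserving their relative order, re-assigns parents while preserving ancestry, and leaves the labels at surviving white cells unchanged; hence a $C$-instance in $L_\mathcal{A}$ corresponds to white cells $(r_i,w_j)$ in $Y$ for $1\le i,j\le k$, with $r_1<\cdots<r_k$ and $w_1$ a strict ancestor of each later $w_j$, whose labels under $L$ match the entries of $C$ one-for-one.

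Next, I would invoke the definition of the $(\mathcal{A},L)$-coloring at the bottom-left cell $(r_k,w_1)$: because that cell is white, Step~1 of the coloring supplies some $A_{i_0}$-instance in $L$ whose cells $(s_i,u_j)$ for $1\le i,j\le\ell$ satisfy $s_i>r_k$ and have each $u_j$ a strict ancestor of $w_1$. I then claim the union of the $C$- and $A_{i_0}$-cells forms an $M_{i_0}$-instance in $L$. Ancestry and row ordering are immediate, since each $u_j$ is a strict ancestor of $w_1$ and hence of every $w_{j'}$, and $s_1>r_k\ge r_i$. For the upper-left zero block I need $(r_i,u_j)\in Y$, which follows from top-alignment of columns together with $(s_\ell,u_j)\in Y$ and $r_i\le r_k<s_\ell$. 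For the lower-right zero block I need $(s_i,w_j)\in Y$, which follows from the descendant-closure axiom and the fact that $(s_i,u_\ell)\in Y$ with $w_j$ a descendant of $u_\ell$. Because $L$ is a transversal, each row $r_i$ already has its unique $1$ placed by the $C$-instance and each row $s_i$ its unique $1$ placed by the $A_{i_0}$-instance, so every other cell in these rows, including the ones above, is automatically labeled $0$. This yields an $M_{i_0}$-instance in $L$, producing the desired contradiction.

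The statement for $T\in S_Y(\mathcal{M}')$ and $D$ then follows by the identical argument with $D$ in place of $C$ and $\mathcal{M}'$ in place of $\mathcal{M}$, since only the block form $\begin{bmatrix}0&*\\A_i&0\end{bmatrix}$ is used. I expect the main point requiring care to be the verification that the zero cells of $M_{i_0}$ actually sit inside $Y$, since this is where the two nontrivial axioms of a forest-Young diagram — column top-alignment and descendant-closure — enter; once those are in place, the label-matching and the contradiction with $L\in S_Y(\mathcal{M})$ are essentially automatic.
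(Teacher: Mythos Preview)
Your proposal is correct and follows essentially the same approach as the paper's proof: assume $L_\mathcal{A}$ contains $C$, lift the instance back to white cells of $Y$, use the Step~1 whiteness of the lowest-and-oldest cell to extract an $A_{i_0}$-instance older and lower, and combine them into an $M_{i_0}$-instance contradicting $L\in S_Y(\mathcal{M})$. The paper's version is much terser, simply asserting that the $C$-instance can be combined with some $A_i$ below and to its left, whereas you have spelled out the verification that the zero-block cells lie in $Y$ via the column top-alignment and descendant-closure axioms---but the underlying argument is identical.
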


\begin{proof}
Suppose $L_\mathcal{A}$ contains $C$. Then, viewing $Y_{\mathcal{A},L}$ as a subset of $Y$, when constructing the $(\mathcal{A},L)$-coloring of $L$, each cell in an instance of $C$ was colored white in Step 1, so we can combine the $C$ with some $A_i\in\mathcal{A}$ lower and older than it to obtain an instance of $M_i\in\mathcal{M}$ in $L$, a contradiction. The proof that $T_\mathcal{A}\in S_{Y_{\mathcal{A},T}}(D)$ is the same.
\end{proof}

For the rest of this section, for each forest-Young diagram $Z$, we fix a bijection $\Pi_Z\colon S_Z(C)\to S_Z(D)$, with inverse map $\Pi_Z^{-1}\colon S_Z(D)\to S_Z(C)$. The existence of $\Pi_Z$ follows from the assumed forest-shape-Wilf equivalence of $C$ and $D$. We use these maps to define two functions $\alpha$ and $\beta$, which will become the inverse maps we use to prove \cref{prop:blocks}.

\begin{definition}\label{def:blocks-alpha-beta}
We define a map $\alpha\colon S_Y(\mathcal{M})\to S_Y$ as follows. Given $L\in S_Y(\mathcal{M})$, we construct the $(\mathcal{A},L)$-coloring of $Y$ and obtain a forest-Young diagram $Y_{\mathcal{A},L}$ with a transversal $L_\mathcal{A}$, which we view as subsets of $Y$ and $L$, respectively. Then, we modify $L$ by replacing the transversal $L_\mathcal{A}$ of $Y_{\mathcal{A},L}$ with the transversal $\Pi_{Y_{\mathcal{A},L}}(L_\mathcal{A})$. Let $\alpha(L)$ be the resulting transversal.

Define a map $\beta\colon S_Y(\mathcal{M}')\to S_Y$ as follows. Given $T\in S_Y(\mathcal{M})$, we construct the $(\mathcal{A},T)$-coloring of $Y$ and obtain a forest-Young diagram $Y_{\mathcal{A},T}$ with a transversal $T_\mathcal{A}$, which we view as subsets of $Y$ and $T$, respectively. Then, we modify $T$ by replacing the transversal $T_\mathcal{A}$ of $Y_{\mathcal{A},T}$ with the transversal $\Pi_{Y_{\mathcal{A},T}}^{-1}(T_\mathcal{A})$. The resulting transversal of $Y$ is $\beta(T)$.
\end{definition}

Note that the expressions $\Pi_{Y_{\mathcal{A},L}}(L_\mathcal{A})$ and $\Pi_{Y_{\mathcal{A},T}}^{-1}(T_\mathcal{A})$ are well-defined by \cref{lem:subtransversal-avoidance}. One may view $\alpha(L)$ as essentially $(L\setminus L_\mathcal{A})\cup \Pi_{Y_{\mathcal{A},L}}(L_\mathcal{A})$, and similarly $\beta(T)$ as essentially $(T\setminus T_\mathcal{A})\cup \Pi_{Y_{\mathcal{A},T}}^{-1}(T_\mathcal{A})$.

\begin{lemma}\label{lem:L-alpha(L)-same-coloring}
Let $L\in S_Y(\mathcal{M})$. Then, the $(\mathcal{A},L)$-coloring of $Y$ is the same as the $(\mathcal{A},\alpha(L))$-coloring of $Y$. Similarly, given $T\in S_Y(\mathcal{M}')$, the $(\mathcal{A},T)$-coloring of $Y$ is the same as the $(\mathcal{A},\beta(T))$-coloring of $Y$. 
\end{lemma}

\begin{proof}
Let $L\in S_Y(\mathcal{M})$. We will show that at each step in the constructions of the $(\mathcal{A},L)$-coloring and $(\mathcal{A},\alpha(L))$-coloring of $Y$, the cells of $Y$ are colored exactly the same way. Suppose cell $c\in Y$ is colored blue in Step 1 of the $(\mathcal{A},L)$-coloring. Then, the cells lower and older than $c$ are colored blue as well, so their labels are the same for $L$ and for $\alpha(L)$. Thus, $\alpha(L)$ will also not contain any instance of $A_i$ completely lower and older than $c$, so $c$ will be colored blue in Step 1 of the $(\mathcal{A},\alpha(L))$-coloring.

Now suppose $c=(r,v)\in Y$ is colored white in Step 1 of the $(\mathcal{A},L)$-coloring, so for some $A_i\in\mathcal{A}$ there is an instance of $A_i$ in $L$ with all its cells older and lower than $c$. Let $v_p$ be the parent of $v$, if it exists. Without loss of generality, we may assume that each of the ordered pairs $(r+1,v)$ and $(r,v_p)$ either does not exist (here we say that $(r,v_p)$ does not exist if $v_p$ is not defined), is not a member of $Y$, or is colored blue in Step 1 of the $(\mathcal{A},L)$-coloring. Then, we can repeat the same argument; the cells lower and older than $c$ are colored blue, so their labels are unaffected when we apply $\alpha$; hence, $\alpha(L)$ will still an instance of $A_i$ lower and older than $c$, so $c$ is colored white in Step 1 of the $(\mathcal{A},\alpha(L))$-coloring. This shows that the cells will be colored the same way in Step 1 of the $(\mathcal{A},\alpha(L))$-coloring as they are in Step 1 of the $(\mathcal{A},L)$-coloring. Then, since the set of blue cells labeled $1$ is the same after Step 1 of both colorings, in Step 2 we also color the same rows and columns blue.

Similarly, we can show that the $(\mathcal{A},T)$-coloring is the same as the $(\mathcal{A},\beta(T))$-coloring.
\end{proof}

\begin{lemma}\label{lem:blocks-alpha-beta-avoidance}
If $L\in S_Y(\mathcal{M})$, then $\alpha(L)\in S_Y(\mathcal{M}')$. Similarly, if $T\in S_Y(\mathcal{M}')$, then $\beta(T)\in S_Y(\mathcal{M})$.
\end{lemma}

\begin{proof}
Let $L\in S_Y(\mathcal{M})$, and suppose that the transversal $\alpha(L)$ contains $M_i'$ for some $1\le i\le m$. Each cell corresponding to an entry of the submatrix $D$ of $M_i'$ in an instance of $M_i'$ must then be colored white after Steps 1 and 2 of the $(\mathcal{A},\alpha(L))$-coloring. But by \cref{lem:L-alpha(L)-same-coloring}, this implies that $\Pi_{Y_{\mathcal{A},L}}(L_\mathcal{A})$ contains $D$, a contradiction. Thus, $\alpha(L)\in S_Y(\mathcal{M}')$, as claimed. Similarly $\beta(T)\in S_Y(\mathcal{M})$ if $T\in S_Y(\mathcal{M}')$.
\end{proof}

We are now in a position to prove \cref{prop:blocks}.

\begin{proof}[Proof of \cref{prop:blocks}]
Fix a forest-Young diagram $Y$, and define $\alpha,\beta$ as in \cref{def:blocks-alpha-beta}. By \cref{lem:blocks-alpha-beta-avoidance}, we may abuse notation and write $\alpha$ and $\beta$ as maps $\alpha\colon S_Y(\mathcal{M})\to S_Y(\mathcal{M}')$ and $\beta\colon S_Y(\mathcal{M}')\to S_Y(\mathcal{M})$. We claim that $\alpha$ and $\beta$ are inverse maps, which finishes the proof.

The fact that $\beta(\alpha(L))=L$ for all $L\in S_Y(\mathcal{M})$ follows easily from \cref{lem:L-alpha(L)-same-coloring}, since the obtained forest-Young diagram $Y_{\mathcal{A},L}$ in the $(\mathcal{A},L)$-coloring is the same subset of $Y$ as the forest-Young diagram $Y_{\mathcal{A},\alpha(L)}$ obtained in the $(\mathcal{A},\alpha(L))$-coloring. Similarly, we find $\alpha(\beta(T))=T$ for all transversals $T\in S_Y(\mathcal{M}')$, so we are done.
\end{proof}

We can now prove \cref{thm:fswe}.
\begin{proof}[Proof of \cref{thm:fswe}]
Fix a positive integer $m$ and patterns $\tau_i,\widetilde\tau_i\in\mathcal{S}_{k_i}$ as described in the hypotheses of the theorem statement. Utilizing the correspondence between permutations and permutation matrices, each permutation pattern $\tau_i\in\mathcal{S}_{k_i}$ corresponds to a permutation matrix $M_i$ of the form $\left[\begin{smallmatrix}0&J_2\\A_i&0\end{smallmatrix}\right]$. The corresponding pattern $\widetilde\tau_i\in\mathcal{S}_{k_i}$ then corresponds to the matrix $M_i'=\left[\begin{smallmatrix}0&I_2\\A_i&0\end{smallmatrix}\right]$. By definition, the question of the forest-shape-Wilf equivalence of $\{\tau_1,\dots,\tau_m\}$ and $\{\widetilde\tau_1,\dots,\widetilde\tau_m\}$ is the same as the question of the forest-shape-Wilf equivalence of the sets $\{M_1,\dots,M_m\}$ and $\{M_1',\dots,M_m'\}$, which holds by combining \cref{prop:i2j2} with \cref{prop:blocks}.
\end{proof}

\section{Consecutive pattern avoidance in forests}\label{sec:consecutive}
A \textit{consecutive instance} of a pattern $\sigma=\sigma(1)\cdots\sigma(k)\in\mathcal{S}_k$ in a permutation $\pi=\pi(1)\cdots\pi(n)\in\mathcal{S}_n$ is a consecutive subsequence $\pi(i)\pi(i+1)\cdots\pi(i+k-1)$ of length $k$ of $\pi$ that is in the same relative order as $\sigma=\sigma(1)\cdots\sigma(k)$. We can generalize this notion to arbitrary sequences of distinct positive integers: a \textit{consecutive instance} of the pattern $\sigma\in\mathcal{S}_k$ in a sequence is a consecutive subsequence of length $k$ that is in the same relative order as $\sigma$. Similarly, a \textit{consecutive instance} of a pattern $\sigma\in\mathcal{S}_k$ in an unordered rooted labeled forest $F$ is a sequence $v_1,\dots,v_k$ of vertices of $F$ such that $v_i$ is the parent of $v_{i+1}$ for $1\le i\le k-1$ and the labels of $v_1,\dots,v_k$ are in the same relative order as $\sigma(1)\cdots\sigma(k)$. We say that $v_1$ is the \textit{starting point} of this consecutive instance, and that $v_k$ is its \textit{endpoint}.

If a permutation, sequence, or forest has at least one consecutive instance of a pattern $\sigma$, it is said to \textit{contain} $\sigma$ (as a consecutive pattern). Otherwise, it \textit{avoids} $\sigma$ (as a consecutive pattern). A permutation, sequence, or forest is said to \textit{avoid} a set of patterns $S$ if it avoids each pattern in $S$.

The study of general consecutive pattern avoidance for permutations was begun by Elizalde and Noy in 2003 in \cite{elizaldenoy1}. Two sets of patterns $S$ and $S'$ are \textit{c-Wilf equivalent} if for all $n$, the number of length-$n$ permutations that avoid $S$ equals the number of length-$n$ permutations that avoid $S'$. However, in this paper we will focus on single-pattern sets. Two patterns $\sigma,\tau$ are \textit{strong-c-Wilf equivalent} if for all $n$ and $m$, the number of length-$n$ permutations containing exactly $m$ consecutive instances of $\sigma$ equals the number containing exactly $m$ consecutive instances of $\tau$. Clearly, strong-c-Wilf equivalence implies c-Wilf equivalence.

For rooted forests, following the definition made in \cite{andersarcher}, two sets of patterns $S$ and $S'$ are \textit{c-forest-Wilf equivalent} if for all nonnegative integers $n$, the number of forests in $F_n$ avoiding $S$ is the same as the number avoiding $S'$. For instance, clearly the sets $S$ and $S^c$ are c-forest-Wilf equivalent, where as usual $S^c$ is the set consisting of the complements of the patterns in $S$.

We say two patterns $\sigma,\tau$ are \textit{strong-c-forest-Wilf equivalent} if for all $n$ and $m$, the number of forests in $F_n$ containing exactly $m$ consecutive instances of $\sigma$ equals the number of forests in $F_n$ containing exactly $m$ consecutive instances of $\tau$. Note that strong-c-forest-Wilf equivalence implies c-forest-Wilf equivalence.

To prove our results, we will generalize the cluster method of Goulden and Jackson introduced in \cite{gouldenjackson}, used by Elizalde and Noy to analyze consecutive pattern avoidance in permutations in \cite{elizaldenoy2}. In the context of consecutive pattern avoidance in permutations, given a pattern $\sigma$, a cluster with respect to $\sigma$ is a linear overlapping set of consecutive instances of $\sigma$. The cluster numbers of $\sigma$ are the counts of clusters of a given size with a given number of consecutive instances of $\sigma$. Two patterns are strong-c-Wilf equivalent if and only if their cluster numbers are equal; this follows naturally from the Principle of Inclusion-Exclusion with the same idea as used in equations \cref{eq:F(n;1;k)_equation} and \cref{eq:f_(n;k)_equation} below. In \cref{subsec:forest-cluster-method}, we generalize this method to forests. Forests are not linear, meaning that consecutive instances of a pattern $\sigma$ can overlap in a multitude of ways, making the new clusters, which we call forest clusters, significantly more complicated. Though these difficulties arise, we prove \cref{thm:same-cluster-numbers}, which states that two patterns are strong-c-forest-Wilf equivalent if and only if their forest cluster numbers are equal; we define forest cluster numbers analogously to cluster numbers. This result also follows naturally from the Principle of Inclusion-Exclusion.

In \cref{subsec:forest-cluster-method}, after developing the forest cluster method and proving \cref{thm:same-cluster-numbers}, we prove \cref{thm:same-first-number}, which states that two patterns that are strong-c-forest-Wilf equivalent necessarily start with the same number, up to complementation. Then, in \cref{subsec:1423-1324}, we prove \cref{thm:1324-1423}, which states that the patterns $1423$ and $1324$ are strong-c-forest-Wilf equivalent. Surprisingly, $1324$ and $1423$ are not c-Wilf equivalent.

\subsection{The forest cluster method}\label{subsec:forest-cluster-method}
We now develop a generalization of the cluster method used in consecutive pattern avoidance in permutations to the setting of rooted forests. All the necessary definitions are given below.

\begin{definition}
Let $m,n\ge 1$ be integers. An \textit{$m$-forest cluster of size $n$} with respect to a pattern $\sigma$ is a labeled rooted tree with $n$ vertices and distinct positive integer labels, along with exactly $m$ distinct highlighted consecutive instances of the pattern $\sigma$ such that the following two conditions are satisfied:
\begin{itemize}
    \item Every vertex is part of some highlighted consecutive instance.
    \item It is not possible to partition the $n$ vertices into two nonempty sets such that each of the $m$ consecutive instances of $\sigma$ is completely in one set or the other.
\end{itemize}
\end{definition}
Given the first condition, the second condition is equivalent to the $m$-vertex graph $G$ being connected, where $G$ is defined as follows: the vertices of $G$ are the $m$ highlighted consecutive instances of the forest cluster, and two vertices are connected if the corresponding consecutive instances overlap (that is, they share at least one vertex).

Note that not all the possible consecutive instances of $\sigma$ need to be highlighted in a forest cluster, as demonstrated in \cref{fig:forestcluster1,fig:forestcluster2}. But when the $m$ highlighted instances are clear from context, for simplicity we often identify the forest cluster with its underlying tree. Also, the label set of the forest cluster is not required to equal $[n]=\{1,\dots,n\}$, since we only require the labels of the vertices to be distinct. This is done for convenience later.

\begin{figure}
\centering
\begin{subfigure}[t]{.5\textwidth-6.5pt}
\centering
\includegraphics{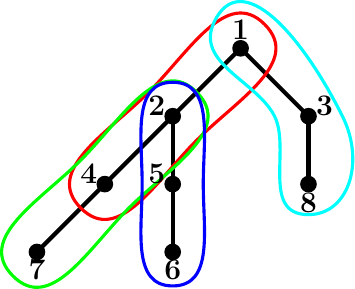}
\caption{A $4$-forest cluster of size $8$ with respect to the pattern $123$.}
\label{fig:forestcluster1}
\end{subfigure}
~\hskip 10pt
\begin{subfigure}[t]{.5\textwidth-6.5pt}
\centering
\includegraphics{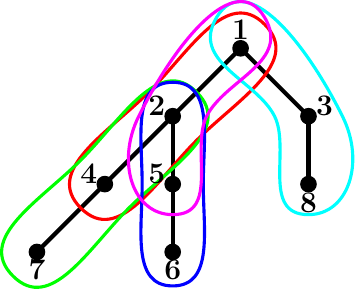}
\caption{A $5$-forest cluster of size $8$ with respect to the pattern $123$.}
\label{fig:forestcluster2}
\end{subfigure}
\caption{Two forest clusters on the same underlying tree.}
\label{fig:forestclusters}
\end{figure}

Given a pattern $\sigma$, let $r_{n, m}$ be the number of $m$-forest clusters with respect to $\sigma$ on $[n]$. We refer to the values $r_{n,m}$, which are indexed by integers $n,m\ge 1$, as the \textit{forest cluster numbers} of $\sigma$.

We first prove the following:
\begin{theorem}\label{thm:same-cluster-numbers}
Two patterns are strong-c-forest-Wilf equivalent if and only if their forest cluster numbers are equal.
\end{theorem}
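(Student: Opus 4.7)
The plan is to adapt the Goulden--Jackson cluster method to the forest setting via inclusion-exclusion. For a pattern $\sigma$ and a forest $F$, write $a_\sigma(F)$ for the number of consecutive instances of $\sigma$ in $F$, and $\mathrm{Inst}_\sigma(F)$ for the set of those instances. The binomial identity
\[t^{a_\sigma(F)}=(1+(t-1))^{a_\sigma(F)}=\sum_{S\subseteq\mathrm{Inst}_\sigma(F)}(t-1)^{|S|}\]
gives
\[\sum_{F\in F_n}t^{a_\sigma(F)}=\sum_{F\in F_n}\sum_{S\subseteq\mathrm{Inst}_\sigma(F)}(t-1)^{|S|}.\]
Strong-c-forest-Wilf equivalence of $\sigma$ and $\tau$ is equivalent to the equality of these generating polynomials in $t$ for every $n$, so it suffices to show that the right-hand sum is determined by, and determines, the forest cluster numbers $r_{n,m}(\sigma)$.

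To carry this out, I would define, for each pair $(F,S)$, the \emph{clusters} of $(F,S)$ as the equivalence classes of the relation on $S$ generated by ``two instances share a vertex.'' For each cluster $K$ with vertex set $V_K$ (the union of the vertex sets of its instances), I would verify that $V_K$ spans a subtree of $F$ --- each instance is a downward path, and the union of overlapping paths inside a tree is connected --- so that, equipped with this rooted structure and the highlighted instances $K$, the subtree on $V_K$ is an $|K|$-forest cluster. Thus $(F,S)$ canonically determines a partition $[n]=V_0\sqcup V_1\sqcup\cdots\sqcup V_\ell$, where the $V_i$ for $i\ge 1$ are the cluster vertex sets and $V_0$ is the uncovered remainder. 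The forest $F$ is then reconstructed from: (a) the cluster structure on each $V_i$; (b) a ``contracted'' rooted forest $F^*$ on $V_0\cup\{c_1,\ldots,c_\ell\}$ obtained by collapsing each $V_i$ to a super-vertex $c_i$; and (c) for each edge in $F^*$ from a $c_i$ to one of its children, a choice of which vertex of $V_i$ serves as that child's actual parent in $F$. Crucially, the number of valid skeletons $(F^*,\text{attachments})$ depends only on the sizes $|V_0|,|V_1|,\ldots,|V_\ell|$ and not on $\sigma$, while by a relabeling argument the number of cluster structures on $V_i$ with $m_i$ highlighted instances is exactly $r_{|V_i|,m_i}(\sigma)$.

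Combining these ingredients yields a decomposition
\[\sum_{(F,S)}(t-1)^{|S|}=\sum_{V_0\sqcup V_1\sqcup\cdots\sqcup V_\ell=[n]}N(|V_0|;|V_1|,\ldots,|V_\ell|)\,\prod_{i=1}^{\ell}\Bigl(\sum_{m_i\ge 1}r_{|V_i|,m_i}(\sigma)(t-1)^{m_i}\Bigr),\]
where $N$ is a pattern-independent skeleton count. The ``if'' direction is then immediate: equal forest cluster numbers give equal right-hand sides, hence equal $\sum_{F\in F_n}t^{a_\sigma(F)}$. For ``only if,'' I would strongly induct on $n$, with the base cases $n=0,1$ trivial. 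The summand with $\ell=1$ and $V_1=[n]$ contributes exactly $\sum_m r_{n,m}(\sigma)(t-1)^m$ (the skeleton being trivial), while every other summand involves only cluster numbers $r_{n',m'}$ with $n'<n$, which agree for $\sigma$ and $\tau$ by the inductive hypothesis. Equating the expressions for $\sigma$ and $\tau$ forces $\sum_m r_{n,m}(\sigma)(t-1)^m=\sum_m r_{n,m}(\tau)(t-1)^m$, and matching coefficients in $(t-1)$ yields $r_{n,m}(\sigma)=r_{n,m}(\tau)$ for every $m$.

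The main obstacle I anticipate is the careful bookkeeping in the cluster-skeleton decomposition: checking that every cluster vertex set truly spans a subtree of $F$, that the contracted-forest-plus-attachment data uniquely reconstructs $F$, and that the overall correspondence is a bijection. The attachment step --- specifying, for each child of $c_i$ in $F^*$, which vertex of $V_i$ it actually hangs from --- is the most delicate piece, but since this is a purely tree-theoretic construction that makes no reference to $\sigma$, the pattern-independence of $N$ follows at once from the bijection.
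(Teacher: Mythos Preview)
Your approach is correct and constitutes a genuine alternative to the paper's proof. Both arguments implement the Goulden--Jackson cluster method in the forest setting, but they organize the counting differently.

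The paper works recursively: it introduces auxiliary counts $T(n,m)$ (trees on $[n]$ with $m$ highlighted instances) and $F(n,i,m)$ ($i$-potted forests on $[n]$ with $m$ highlighted instances), relates $f_{n,m}$ to $F(n,1,m)$ by binomial inversion, and then derives recurrences by examining the root vertex of a tree---either peeling off just the root if it lies in no highlighted instance, or peeling off the entire forest cluster containing it. This produces explicit two-way recurrences linking $F(n,i,m)$, $T(n,m)$, and $r_{n,m}$, and both directions of the theorem follow by induction on $n$ through these recurrences.

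Your approach instead contracts every cluster at once into a skeleton forest and packages the result as a single closed-form identity in the variable $(t-1)$. This is more conceptual and sidesteps the auxiliary potted-forest machinery, at the price of leaving the skeleton count $N$ implicit rather than as an explicit recurrence; the paper's recurrences, by contrast, are directly reusable for computing specific cluster numbers (as in the subsequent calculation of $r_{2k-1,2}$). One small point to tidy: your sum over $V_0\sqcup V_1\sqcup\cdots\sqcup V_\ell=[n]$ should be taken over unordered collections $\{V_1,\ldots,V_\ell\}$, or else carry a factor of $1/\ell!$, since the clusters of $(F,S)$ come with no canonical ordering. With that adjustment, the bookkeeping you anticipate---that each cluster spans a subtree, that the skeleton-plus-attachment data reconstructs $F$ bijectively, and that $N$ depends only on block sizes---is routine, and the argument goes through.
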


\begin{proof}
We first set some notation. For integers $n\ge 1$ and $m\ge 0$, let $T(n, m)$ be the number of rooted trees on $[n]$ with $m$ highlighted consecutive instances of a pattern $\sigma$. For integers $n,i,m\ge 0$, let $F(n, i, m)$ be the number of rooted forests on $[n]$ with $m$ highlighted consecutive instances of a pattern $\sigma$, with $i$ (distinguishable) pots to put the trees in. As a reminder from \cref{sec:treeforest}, having $i$ pots means assigning each constituent tree of the forest to one of $i$ distinguishable pots, some of which may be empty. For integers $n,m\ge 0$, let $f_{n, m}$ be the number of forests on $[n]$ with exactly $m$ total consecutive instances of the pattern $\sigma$. Immediately, we have
\begin{equation}\label{eq:F(n;1;k)_equation}
F(n,1,m)=\sum_{i\ge 0}\binom{m+i}{m}f_{n,m+i}.
\end{equation}
Note this sum only has finitely many nonzero terms. On the other hand, by the Principle of Inclusion-Exclusion, we find
\begin{equation}\label{eq:f_(n;k)_equation}
f_{n,m}=\sum_{i\ge 0}(-1)^i\binom{m+i}{m}F(n,1,m+i).
\end{equation}
So to prove the statement, after applying \cref{eq:F(n;1;k)_equation} and \cref{eq:f_(n;k)_equation}, it suffices to show that the forest cluster numbers determine the values $F(n,1,m)$ in a way that is independent of $\sigma$, and vice versa. First we derive some recurrences, using methods similar to those used to count forests and trees avoiding classical patterns in \cref{sec:recurrences}.

Suppose $n\ge 1$; we will find an expression for $F(n,i,m)$. To construct a forest counted by $F(n,i,m)$, we first consider the tree containing the vertex labeled $1$, and then separate that tree from the rest of the vertices. Explicitly, suppose that the vertex labeled $1$ is in a tree with $\ell$ total vertices. This tree can go in any one of the $i$ pots. There are $\binom{n-1}{\ell-1}$ ways to choose the other $\ell-1$ vertices of the tree, and if we highlight exactly $j$ consecutive instances of $\sigma$ in this tree, there are then $T(\ell, j)$ possibilities for the tree. So, if $n\ge 1$,
\begin{equation}\label{eq:F(n;i;k)_recurrence}
F(n, i, m)=\sum_{\ell=1}^n i\binom{n-1}{\ell-1}\sum_{j=0}^m T(\ell, j)F(n-\ell, i, m-j).
\end{equation}

To calculate $T(n, m)$, again for $n\ge 1$, we do casework on whether the root vertex is part of a highlighted consecutive instance of $\sigma$. If it is, then it is contained in a uniquely determined forest cluster. If this forest cluster consists of $\ell$ vertices and has exactly $j$ highlighted consecutive instances, the cluster has $\binom{n}{\ell}r_{\ell, j}$ possibilities. Then, we can think of adding the rest of the vertices as creating a forest with $\ell$ possible pots on $n-\ell$ vertices and with $m-j$ highlighted consecutive instances of $\sigma$. Otherwise, if the root is not in a highlighted consecutive instance of $\sigma$, we reduce to a forest on $n-1$ vertices with $m$ highlighted consecutive instances of $\sigma$. So, for $n\ge 1$,
\begin{equation}\label{eq:T(n;k)_recurrence}
T(n, m)=nF(n-1, 1, m)+\sum_{\ell=1}^n\sum_{j=1}^m \binom{n}{\ell}r_{\ell, j}F(n-\ell, \ell, m-j).
\end{equation}

We first show that the forest cluster numbers determine the values $F(n,1,m)$. Regardless of the pattern $\sigma$, the quantity $F(0,i,m)$ equals $1$ if $m=0$, and $0$ if $m>0$. Inducting on $n$ and using \cref{eq:F(n;i;k)_recurrence} and \cref{eq:T(n;k)_recurrence}, we find the forest cluster numbers uniquely determine the values $F(n,i,m)$, and in particular the values $F(n,1,m)$, in a manner independent of $\sigma$, as claimed.

For the other direction, we must show that the values $F(n,1,m)$ determine the forest cluster numbers. The values $F(n,1,m)$ first determine all values of the form $F(n,i,m)$, as
\[F(n,i,m)=\sum_{\substack{n_1+\cdots+n_i=n\\n_1,\dots,n_i\in\mathbb{Z}_{\ge 0}}}\binom{n}{n_1,\dots,n_i}\sum_{\substack{m_1+\cdots+m_i=m\\m_1,\dots,m_i\in\mathbb{Z}_{\ge 0}}}\prod_{j=1}^iF(n_j,1,m_j).\]
This equation follows by considering all possible ways to assign the $n$ vertices and the $m$ consecutive instances to the $i$ pots. Also, by \cref{eq:F(n;i;k)_recurrence}, for all $n\ge 1$ and $m\ge 0$,
\[T(n,m)=F(n,1,m)-\sum_{\substack{1\le \ell\le n\\0\le j\le m\\(\ell,j)\neq(n,m)}}\binom{n-1}{\ell-1}F(n-\ell,1,m-j)T(\ell,j).\]
Thus inductively, we also determine all $T(n,m)$ from the values $F(n,i,m)$, starting with $T(1,0)=F(1,1,0)$. Finally, by \cref{eq:T(n;k)_recurrence}, for all $n,m\ge 1$,
\[r_{n,m}=T(n,m)-nF(n-1, 1, m)-\sum_{\substack{1\le \ell\le n\\1\le j\le m\\(\ell,j)\neq(n,m)}}\binom{n}{\ell}r_{\ell, j}F(n-\ell, \ell, m-j).\]
Thus the forest cluster numbers are inductively determined from the values $T(n,m)$ and $F(n,i,m)$. Moreover, as was the case in the reverse direction, the forced values of the forest cluster numbers can be computed without knowledge of $\sigma$, so we are done.
\end{proof}

Determining strong-c-forest-Wilf equivalences is therefore equivalent to determining whether the forest cluster numbers are equal. A straightforward computation yields a formula for $r_{2k-1,2}$ for an arbitrary pattern $\sigma\in\mathcal{S}_k$, and this single forest cluster number allows us to prove \cref{thm:same-first-number} below.

\begin{proposition}\label{prop:cluster-number}
Let $\sigma=\sigma(1)\cdots\sigma(k)\in\mathcal{S}_k$. Then,
\[r_{2k-1,2}=\binom{2k-1}{k}-\frac{1}{2}\binom{2\sigma(1)-2}{\sigma(1)-1}\binom{2k-2\sigma(1)}{k-\sigma(1)}.\]
\end{proposition}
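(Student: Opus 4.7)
The plan is to directly count the $2$-forest clusters of size $2k-1$ by analyzing their structure and then applying inclusion-exclusion to an ordered version. Each of the two highlighted consecutive instances is a downward path of $k$ vertices, and since the two paths together cover all $2k-1$ vertices, they share exactly one vertex $v$. The key structural observation is that $v$ must lie at the top (position~$1$) of at least one of the two paths: if $v$ sat at position $i>1$ of one path and position $j>1$ of the other, then the parent of $v$ in the tree would be forced to equal the vertex at position $i-1$ of the first path and the vertex at position $j-1$ of the second, producing a second shared vertex and contradicting $|P_1 \cap P_2|=1$.

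Having established this structure, I would pass to ordered clusters, in which the two highlighted instances are distinguished. Since the two instances have distinct vertex sets, every unordered cluster yields exactly two ordered ones, so $r_{2k-1,2}$ equals half the ordered count. Let $A$ (respectively $B$) denote the set of ordered clusters in which $v$ is at the top of the first (respectively second) path. By the structural observation, every ordered cluster lies in $A \cup B$; by the symmetry swapping the two paths, $|A|=|B|$; and inclusion-exclusion gives $|A \cup B| = 2|A| - |A \cap B|$, so $r_{2k-1,2} = |A| - \tfrac12 |A \cap B|$.

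To compute $|A|$, I would condition on the position $j \in [k]$ of $v$ in the second path (the first path then starts at $v$). Writing $a$ for the label of $v$, the pattern conditions force $a$ to be the $\sigma(1)$-st smallest among the first path's labels and the $\sigma(j)$-th smallest among the second path's labels. Counting labels less than $a$ in $[2k-1]$ two ways pins down $a = \sigma(1) + \sigma(j) - 1$; a direct count of how to distribute the labels below and above $a$ between the two paths then yields $\binom{\sigma(1)+\sigma(j)-2}{\sigma(1)-1}\binom{2k - \sigma(1) - \sigma(j)}{k - \sigma(1)}$ labelings, after which the linear order within each path is determined by $\sigma$. Summing over $j$, substituting $\ell = \sigma(j)$ (a bijection on $[k]$ since $\sigma \in \mathcal{S}_k$), and applying the Chu--Vandermonde identity $\sum_{m=0}^{n}\binom{p+m}{m}\binom{q+n-m}{n-m} = \binom{p+q+n+1}{n}$ collapses the sum to $\binom{2k-1}{k}$. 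Finally, $|A \cap B|$ is the $j=1$ specialization of this computation, namely $\binom{2\sigma(1)-2}{\sigma(1)-1}\binom{2k-2\sigma(1)}{k-\sigma(1)}$, and substituting into $r_{2k-1,2} = |A| - \tfrac12|A\cap B|$ gives the stated formula.

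The main obstacle is the structural claim identifying $v$ as the top vertex of one of the two paths; once this is in hand, the label-counting argument and the Chu--Vandermonde reduction are essentially routine, although some care is needed to keep ordered versus unordered counts straight.
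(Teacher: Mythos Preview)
Your approach is correct and follows the same strategy as the paper: identify the shared vertex as the start of at least one instance, pass to ordered pairs, and correct for the double-counting that occurs when both instances start at the root. The one difference is in how the main term $\binom{2k-1}{k}$ is obtained: the paper gets it in a single stroke---choosing which $k$ labels form the instance $B$ that starts at the shared vertex determines both $B$ and the complementary instance $A$ uniquely---whereas you condition on the position $j$ of the shared vertex in the other path and then collapse the resulting sum via Chu--Vandermonde, which is a perfectly valid but more laborious route to the same number.
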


\begin{proof}
We first count the number of ordered pairs $(A,B)$ where $A$ and $B$ are sequences of $k$ distinct positive integers taken from the set $\{1,2,\dots,2k-1\}$ such that $A$ and $B$ are both in the same relative order as $\sigma$. We impose the condition that $A$ and $B$ have exactly one number in common, which must be the first number of $B$. So, each element of the set $\{1,2,\dots,2k-1\}$ appears in $A$ or $B$. The number of such ordered pairs is easily seen to be $\binom{2k-1}{k}$; after choosing the $k$ numbers that appear in $B$, the order of $B$ is uniquely determined, and there is then exactly one possibility for $A$.

The number of forest clusters counted by $r_{2k-1,2}$ is almost exactly given by the number of ordered pairs $(A,B)$ counted above, where the label of the root of the cluster corresponds to the first number of $A$, and $A$ and $B$ correspond to the highlighted consecutive instances. However, there is overcounting if the starting point of each highlighted consecutive instance is the root; in this case, we obtain the same forest if we swap $A$ and $B$. There are $\binom{2\sigma(1)-2}{\sigma(1)-1}\binom{2k-2\sigma(1)}{k-\sigma(1)}$ ordered pairs $(A,B)$ where $A$ and $B$ start with the same number, so we must subtract half of this amount from $\binom{2k-1}{k}$, leading to the desired expression for $r_{2k-1,2}$.
\end{proof}

We can now give a proof of \cref{thm:same-first-number}.
\begin{proof}[Proof of \cref{thm:same-first-number}]
If two patterns $\sigma, \tau \in\mathcal{S}_k$ are strong-c-forest-Wilf equivalent, then by \cref{thm:same-cluster-numbers}, all their forest cluster numbers must be equal. In particular, their values for $r_{2k-1, 2}$ are equal, so by \cref{prop:cluster-number},
\[\binom{2\sigma(1)-2}{\sigma(1)-1}\binom{2k-2\sigma(1)}{k-\sigma(1)}=\binom{2\tau(1)-2}{\tau(1)-1}\binom{2k-2\tau(1)}{k-\tau(1)}.\]
This implies $\{\sigma(1)-1, k-\sigma(1)\}=\{\tau(1)-1, k-\tau(1)\}$,
giving the result.
\end{proof}

\subsection{$1423$ and $1324$ are c-forest-Wilf equivalent}\label{subsec:1423-1324}

We will now show that the forest cluster numbers are the same for $1324$ and $1423$. Throughout this section, we restrict our attention to length-$4$ patterns $\sigma=\sigma(1)\sigma(2)\sigma(3)\sigma(4)$ such that $\sigma(1)<\sigma(2)$ and $\sigma(3)<\sigma(4)$. In \cref{subsubsec:involution-on-proper-twig-collections}, we introduce objects we refer to as \textit{$\sigma$-(extra)nice trees} and \textit{proper twig collections}, and then construct an involution on proper twig collections. In \cref{subsubsec:forest-clusters-nice-trees}, after using the involution to relate the number of $1423$-nice trees to the number of $1324$-nice trees, we relate forest clusters with respect to $\sigma$ to $\sigma$-nice trees, allowing us to prove \cref{thm:1324-1423}. Finally, in \cref{subsubsec:sigma-extranice-enumeration}, we show that the numbers of $\sigma$-extranice trees on $[n]$ are equal for $\sigma=1234,1423,1324$, and then provide an explicit formula for their quantity.

\subsubsection{An involution on proper twig collections}\label{subsubsec:involution-on-proper-twig-collections}

We first define a certain type of labeled forest with respect to $\sigma$. Recall from \cref{sec:preliminaries} the notions of the \textit{depth} of a vertex and the \textit{depth} of a tree, and that the root of a tree has depth $1$.

\begin{definition}\label{def:(extra)nice}
A labeled rooted forest (resp.\ tree) with distinct positive integer labels is \textit{$\sigma$-nice} if the following conditions are satisfied:
\begin{itemize}
    \item Every vertex of odd depth has at least one child.
    \item Every vertex of even depth has a label greater than that of its parent, and every vertex of even depth with depth at least $4$ is
    the endpoint of a consecutive instance of $\sigma$.
\end{itemize}
A forest (resp.\ tree) is \textit{$\sigma$-extranice} if it is $\sigma$-nice and the following condition holds:
\begin{itemize}
\item Every vertex of odd depth has exactly one child.
\end{itemize}
\end{definition}

Note that by the convention mentioned in \cref{sec:preliminaries}, a $\sigma$-(extra)nice tree must have at least one vertex (and therefore at least two vertices, by the first condition). Also, the definition only requires the labels of a $\sigma$-(extra)nice forest or tree to be distinct positive integers, so the labels do not necessarily have to form the set $[n]$ for some $n$. This is done for convenience later in this section.

It follows from \cref{def:(extra)nice} that a $\sigma$-extranice tree or forest can only exist on an even number of vertices. For the purposes of proving that $1423$ and $1324$ are strong-c-forest-Wilf equivalent, we only need to work with $1423$-nice trees and $1324$-nice trees, but we will have more to say about $\sigma$-extranice forests and $\sigma$-extranice trees in \cref{subsubsec:sigma-extranice-enumeration}. There are exactly four $1423$-extranice trees on $\{1,2,\dots,6\}$ and four $1324$-extranice trees on $\{1,2,\dots,6\}$, which are shown in \cref{fig:extranice1,fig:extranice2}, respectively.

\begin{figure}
\centering
\includegraphics{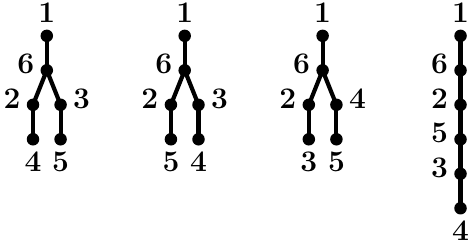}
\caption{The four $1423$-extranice trees on $\{1, 2, 3, 4, 5, 6\}$.}
\label{fig:extranice1}
\end{figure}

\begin{figure}
\centering
\includegraphics{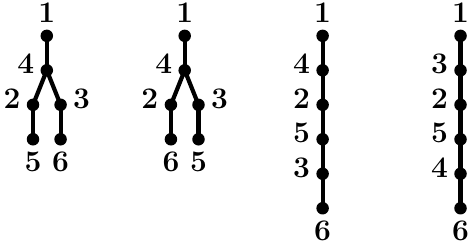}
\caption{The four $1324$-extranice trees on $\{1, 2, 3, 4, 5, 6\}$.}
\label{fig:extranice2}
\end{figure}

In a $\sigma$-nice forest or tree, by definition every vertex of odd depth has a nonempty set of children. Motivated by this fact, we make the following definition:

\begin{definition}
A \textit{twig} is a labeled rooted tree of depth exactly $2$ consisting of a \textit{parent vertex} (the root) along with at least one adjacent \textit{child vertex}, such that the labels of the child vertices are distinct. A twig is \textit{proper} if the label of each child vertex is greater than the label of the parent vertex (which in particular implies that all vertices have distinct labels).

The set of labels of the child vertices of a twig $t$ is the \textit{child label set} of $t$, and the label of the parent vertex of $t$ is the \textit{parent label} of $t$. We denote a twig with parent label $p$ and child label set $C$ as $(p, C)$.
\end{definition}

While we only work with twigs that have distinct vertex labels, we use a weaker definition of twig as we a priori do not know about the distinctness of labels under the $\gamma$ map below (a result proved in \cref{lem:gamma-child-vertex-labels-permutation}).

A $\sigma$-nice forest or tree has a uniquely determined \textit{decomposition} into proper twigs, a concept we formally define below. Before that, we must define certain sets of twigs.

\begin{definition}
A \textit{twig collection} is a nonempty set of twigs in which the child label sets of the twigs are disjoint. A twig collection is \textit{proper} if each of the twigs is proper and all the vertex labels among the twigs are distinct.

A \textit{parent vertex} (resp.\ \textit{child vertex}) of a twig collection $W$ is a parent vertex (resp.\ child vertex) of a twig in $W$.

The \textit{child label set} of a twig collection $W$ is the set of labels of the child vertices of $W$ (i.e., the union of the child label sets of the twigs of $W$). The \textit{label set} of a proper twig collection $W$ is the set of labels of the vertices of $W$.
\end{definition}
Note that a twig collection consisting only of proper twigs is not necessarily a proper twig collection, though this distinction will be irrelevant once we prove \cref{lem:gamma-child-vertex-labels-permutation}.

\begin{definition}\label{def:nice-decomposition}
A $\sigma$-nice forest $F$ (resp.\ tree $T$) has a \textit{decomposition} into a proper twig collection $W$, where every odd-depth vertex $v$ of $F$ (resp.\ in $T$), along with its children, becomes a twig in $W$. We then say that $F$ (resp.\ $T$) is \textit{constructed from $W$}. Given a proper twig collection $W$, let $F_\sigma(W)$ (resp.\ $T_\sigma(W)$) be the number of $\sigma$-nice forests (resp.\ trees) constructed from $W$.
\end{definition}
The fact that $W$ is a proper twig collection follows from the definition of a $\sigma$-nice forest.

\begin{figure}
    \centering
    \includegraphics{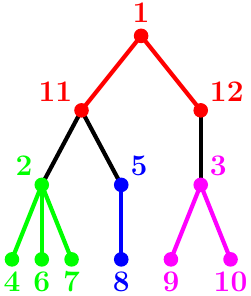}
    \caption{The decomposition of a $1423$-nice tree into a proper twig collection, namely $\{(1, \{11, 12\}), (2, \{4, 6, 7\}), (5, \{8\}), (3, \{9, 10\})\}$.}
    \label{fig:twigdecomp}
\end{figure}

\cref{fig:twigdecomp} shows the decomposition of a $1423$-nice tree into a proper twig collection with $4$ twigs. We will use the decompositions of $\sigma$-nice trees to relate certain counts of $1423$-nice trees on $[n]$ to the corresponding counts of $1324$-nice trees on $[n]$; for instance, one of our results will imply that the number of $1423$-nice trees on $[n]$ equals the number of $1324$-nice trees on $[n]$ (a result that is later generalized in \cref{thm:extranice-equal}). Then, to prove \cref{thm:1324-1423}, we will use these relations to show that the forest cluster numbers of $1423$ and the forest cluster numbers of $1324$ satisfy the same recurrence.

\begin{definition}\label{def:reltwig}
Let $W$ be a twig collection and $T$ be a set of positive integers such that the total number of child vertices of $W$ is $|T|$. Then, we define $\operatorname{rel}_T(W)$ to be the twig collection obtained by relabeling the vertices of $W$ so that the label of each parent vertex is preserved, the child label set becomes $T$, and the initial relative order of the labels of the child vertices across all twigs in $W$ is preserved.
\end{definition}
We give an example to illustrate \cref{def:reltwig}.
\begin{example}
Let $W=\{(1, \{2, 4\}), (3, \{5, 8\})\}$ be a twig collection with parent vertices labeled $1$ and $3$, and let $T=\{5, 7, 9, 11\}$. Then, $\operatorname{rel}_T(W)=\{(1, \{5, 7\}), (3, \{9, 11\})\}$.
\end{example}

\begin{definition}
Let $t$ be a twig and let $E=\{x_1,x_2,\dots,x_\ell\}$ be a finite set of positive integers that contains the child label set of $t$ as a subset, where $x_1<x_2<\cdots<x_\ell$. We define the twig $\alpha_E(t)$ to be a relabeled version of $t$ in which the parent label remains the same, but for each child vertex $v$, if in $t$ vertex $v$ is labeled $x_i$, then in $\alpha_E(t)$ vertex $v$ is labeled $x_{\ell+1-i}$.
\end{definition}

\begin{definition}\label{def:gamma-operation}
We recursively define a map $\gamma\colon \{\text{proper twig collections}\} \to \{\text{twig collections}\}$. Let $W=\{t_1, t_2, \dots, t_s\}$ be a proper twig collection, where the twigs $t_1,\dots,t_s$ are ordered in increasing order of parent label. If $s=1$, then we define $\gamma(W)=W$. Otherwise, let $C$ be the child label set of $W$. Furthermore, let $E$ be the subset of $C$ that consists of labels that are larger than the parent label of $t_s$, and let $D$ be the child label set of $\alpha_E(t_s)$. Then, we define
\[\gamma(W)=\operatorname{rel}_{C\setminus D}(\gamma(W \setminus \{t_s\})) \cup \{\alpha_E(t_s)\}.\]
\end{definition}

By inducting on $s$, we can see that the recursive process terminates, each expression appearing in the definition is defined, $\gamma(W)$ is a twig collection, and $\gamma(W)$ differs from $W$ only by a relabeling of the child vertices, as $\alpha_E$ and $\operatorname{rel}_{C\setminus D}$ do not change the parent label of any twig or the number of children any parent vertex has.

\begin{example}
Say we have the proper twig collection $W=\{(1, \{3, 4\}), (2, \{5, 7\}), (6, \{8\})\}$. When applying $\gamma$, we have $C=\{3, 4, 5, 7, 8\}$ and $E=\{7, 8\}$. Then, $\alpha_{\{7, 8\}}((6, \{8\})=(6, \{7\})$, so
\[\gamma(W)=\operatorname{rel}_{\{3,4,5,8\}}(\gamma(\{(1,\{3,4\}),(2,\{5,7\})\}))\cup\{(6,\{7\})\}.\]

Now, when applying $\gamma$ to $\{(1, \{3, 4\}), (2, \{5, 7\})\}$, the four vertex labels greater than $2$ are $3$, $4$, $5$, and $7$. Since $2$ is currently connected to the greatest two of the four vertices, applying $\gamma$ makes it connect to the least two of the four vertices, namely $3$ and $4$. Thus
\[\alpha_{\{3, 4, 5, 7\}}((2, \{5, 7\}))=(2, \{3, 4\}).\]
So, the second twig in $\{(1, \{3, 4\}), (2, \{5, 7\})\}$ changes to $(2, \{3, 4\})$. Therefore,
\[\gamma(\{(1, \{3, 4\}), (2, \{5, 7\})\})=\{(1, \{5, 7\}), (2, \{3, 4\})\},\]
because the labels of the child vertices of $(1, \{3, 4\})$ are relabeled to be the remaining labels in $\{3, 4, 5, 7\}$:
\[\operatorname{rel}_{\{5, 7\}}(\{(1, \{3, 4\})\})=\{(1, \{5, 7\})\}.\]
Under the $\operatorname{rel}$ operation, the child vertices of these two twigs are relabeled using the set $\{3, 4, 5, 7, 8\} \setminus \{7\}$:
\[\operatorname{rel}_{\{3, 4, 5, 8\}}(\{(1, \{5, 7\}), (2, \{3, 4\})\})=\{(1, \{5, 8\}), (2, \{3, 4\})\}.\]
We end up with $\gamma(W)=\{(1, \{5, 8\}), (2, \{3, 4\}), (6, \{7\})\}$.
\end{example}

\begin{lemma}\label{lem:gamma-child-vertex-labels-permutation}
Let $W$ be a proper twig collection. Then, the twig collection $\gamma(W)$ differs from $W$ only by a permutation of the labels of the child vertices, and consequently all the labels of the vertices of $\gamma(W)$ are distinct.
\end{lemma}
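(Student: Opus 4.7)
The plan is to prove the lemma by strong induction on $s$, the number of twigs in $W$. The base case $s=1$ is immediate, since by definition $\gamma(W)=W$. For the inductive step, the strategy is to apply the inductive hypothesis to $W\setminus\{t_s\}$ and then verify that stitching the result back together with $\alpha_E(t_s)$ via the $\operatorname{rel}$ operation preserves three key features: the multiset of parent labels, the number of children per parent, and the overall child label set.

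For the inductive step, let $W=\{t_1,\dots,t_s\}$ be ordered by increasing parent label, let $C$ be the child label set of $W$, and let $C_s$ denote the child label set of $t_s$. First I would verify that the operation $\operatorname{rel}_{C\setminus D}$ appearing in \cref{def:gamma-operation} is well-defined. The twig $\alpha_E(t_s)$ has the same parent label as $t_s$ and the same number of child vertices, and its child label set $D$ is contained in $E\subseteq C$ with $|D|=|C_s|$. Hence $|C\setminus D|=|C|-|C_s|$, which by the inductive hypothesis equals the number of child vertices of $\gamma(W\setminus\{t_s\})$, so the relabeling has a target set of the correct cardinality.

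Next, by the inductive hypothesis $\gamma(W\setminus\{t_s\})$ shares its parent labels, per-parent children counts, and child label set $C\setminus C_s$ with $W\setminus\{t_s\}$. The $\operatorname{rel}_{C\setminus D}$ operation preserves parent labels and the number of children of each parent, replacing the child label set by $C\setminus D$. Adding $\{\alpha_E(t_s)\}$ back in yields a twig collection whose parent labels and per-parent children counts match those of $W$, and whose total child label set is $(C\setminus D)\cup D=C$. Thus $\gamma(W)$ differs from $W$ only by a permutation of the child labels. Because $W$ is a proper twig collection its labels are all distinct, so the labels of $\gamma(W)$ are distinct as well, which also implies the child label sets of the twigs of $\gamma(W)$ are pairwise disjoint, confirming that $\gamma(W)$ is a genuine twig collection.

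There is no real conceptual obstacle here; the main task is bookkeeping, and the only subtle point is ensuring that the cardinality condition $|C\setminus D|=|C|-|C_s|$ needed for $\operatorname{rel}_{C\setminus D}$ to make sense follows from the containment $D\subseteq E\subseteq C$ together with $|D|=|C_s|$, both of which are immediate from the definition of $\alpha_E$.
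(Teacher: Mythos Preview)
Your proof is correct and follows essentially the same inductive approach as the paper. The paper offloads the observations about preservation of parent labels and children counts to a remark immediately after \cref{def:gamma-operation}, so its formal proof is reduced to the one-line computation $(C\setminus D)\cup D=C$; you simply spell out that same induction in full within the proof itself.
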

\begin{proof}
As noted after \cref{def:gamma-operation}, the labels of the child vertices of $\gamma(W)$ are distinct, and the twig collection $\gamma(W)$ differs from $W$ only by a relabeling of the child vertices. If $W$ consists of exactly one twig, then $\gamma(W)=W$, and we are done. Otherwise, using the notation in \cref{def:gamma-operation}, the child label set of $\gamma(W)$ is $(C\setminus D)\cup D=C$, and again we are done.
\end{proof}

\begin{lemma}\label{lem:specialrel}
Let $W=\{t_1,\dots,t_s\}$ be a proper twig collection, where $s\ge 2$. Using the notation in \cref{def:gamma-operation}, where $\gamma(W)=\operatorname{rel}_{C\setminus D}(\gamma(W \setminus \{t_s\})) \cup \{\alpha_E(t_s)\}$, each of the twig collections $\gamma(W \setminus \{t_s\})$ and $\operatorname{rel}_{C\setminus D}(\gamma(W \setminus \{t_s\}))$ has all its vertex labels distinct, and furthermore the two twig collections have all their vertex labels in the same relative order.
\end{lemma}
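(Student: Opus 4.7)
The plan is to lean on \cref{lem:gamma-child-vertex-labels-permutation} applied to the proper twig collection $W \setminus \{t_s\}$ (which is proper since $W$ is and removing a twig preserves properness), and then to track carefully how $\operatorname{rel}_{C \setminus D}$ reshuffles child labels. By \cref{lem:gamma-child-vertex-labels-permutation}, the twig collection $\gamma(W \setminus \{t_s\})$ has all distinct vertex labels, with the same parent labels $p_1, \dots, p_{s-1}$ as $t_1, \dots, t_{s-1}$ and with child label set $C \setminus X$, where $X$ denotes the child label set of $t_s$. This handles the distinctness claim for $\gamma(W \setminus \{t_s\})$ itself.

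For $\operatorname{rel}_{C \setminus D}(\gamma(W \setminus \{t_s\}))$, the operation fixes each parent label $p_i$ and replaces the sequence of child labels by a sequence drawn from $C \setminus D$ in the same relative order; the cardinality matches because $|D| = |X|$. Distinctness is then immediate: the parent labels are pairwise distinct (inherited from $W$) and are disjoint from $C \setminus D \subseteq C$, since $W$ being proper means no parent label of $W$ coincides with a label in $C$.

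The crux is verifying that $\gamma(W \setminus \{t_s\})$ and its $\operatorname{rel}_{C \setminus D}$-image have the same relative order of vertex labels. The key observation is that $X, D \subseteq E \subseteq C$, and by definition every element of $E$ is strictly greater than $p_s$; since the twigs are ordered by parent label, $p_i < p_s$ for all $i < s$, so every element of $X \cup D$ exceeds every such $p_i$. Hence
\[|\{x \in X : x < p_i\}| = 0 = |\{d \in D : d < p_i\}|\quad\text{for all } i < s,\]
which immediately gives $|\{c \in C \setminus X : c < p_i\}| = |\{c \in C \setminus D : c < p_i\}|$ for each $i$. Since $\operatorname{rel}_{C \setminus D}$ sends the $j$-th smallest element of $C \setminus X$ to the $j$-th smallest element of $C \setminus D$, each child label ends up on the same side of every $p_i$ as it started; combined with the fact that parent labels are fixed and the relative order among child labels is preserved by $\operatorname{rel}$, this shows all vertex labels are in the same relative order before and after.

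The main obstacle is simply keeping the layered definitions of $C$, $D$, $E$, $\alpha_E$, and $\operatorname{rel}$ straight and identifying which subset of $C$ is being removed in each comparison. Once one sees that the entire ``swap'' from $X$ to $D$ takes place among values above $p_s$, and hence above every earlier parent label, the relative-order argument reduces to the short cardinality check above.
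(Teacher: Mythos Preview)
Your proof is correct and follows essentially the same approach as the paper's. The only cosmetic differences are that the paper deduces distinctness of $\operatorname{rel}_{C\setminus D}(\gamma(W\setminus\{t_s\}))$ by noting it is a subset of $\gamma(W)$ (which has distinct labels by \cref{lem:gamma-child-vertex-labels-permutation}), whereas you argue directly; and for the relative-order claim the paper phrases the key point as ``labels in $C\setminus E$ are literally unchanged by $\operatorname{rel}_{C\setminus D}$,'' while you phrase it as the equivalent cardinality identity $|\{c\in C\setminus X:c<p_i\}|=|\{c\in C\setminus D:c<p_i\}|$. Both versions rest on the same observation that $X,D\subseteq E$ and every element of $E$ exceeds $p_s$ and hence every $p_i$ with $i<s$.
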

\begin{proof}
By \cref{lem:gamma-child-vertex-labels-permutation}, the twig collection $\gamma(W\setminus\{t_s\})$ has all its vertex labels distinct. Similarly, the same is true for $\gamma(W)$, and therefore also for $\operatorname{rel}_{C\setminus D}(\gamma(W \setminus \{t_s\}))\subseteq\gamma(W)$. We now show the vertex labels of $\gamma(W\setminus\{t_s\})$ and $\operatorname{rel}_{C\setminus D}(\gamma(W \setminus \{t_s\}))$ are in the same relative order.

When applied to any twig collection, the $\operatorname{rel}$ operation preserves the label of each parent vertex and preserves the relative order of the labels of the child vertices. Let $D_0$ be the child label set of the twig $t_s$, so that $D_0,D\subseteq E$. By \cref{lem:gamma-child-vertex-labels-permutation}, the child label set of $\gamma(W\setminus\{t_s\})$ is $C\setminus D_0$. Applying $\operatorname{rel}_{C\setminus D}$, we change the child label set of $\gamma(W\setminus\{t_s\})$ from $C\setminus D_0$ to $C\setminus D$, which does not change the label of any child vertex whose label lies in $C\setminus E$. Therefore, if a child vertex $v$ of $\gamma(W\setminus\{t_s\})$ has a label $\ell$ that is less than the label of some parent vertex of $\gamma(W\setminus\{t_s\})$, then $\ell$ must lie in $C\setminus E$, and so the label of $v$ is unchanged. So, when applying $\operatorname{rel}_{C \setminus D}$, a child vertex only changes labels if its original label is already greater than the label of each parent vertex of $\gamma(W\setminus\{t_s\})$, and it changes to another label that is greater than the label of each parent vertex of $\gamma(W\setminus\{t_s\})$. Thus, all the vertex labels of $\gamma(W\setminus \{t_s\})$ and $\operatorname{rel}_{C \setminus D}(\gamma(W \setminus \{t_s\}))$ are in the same relative order.
\end{proof}

\begin{lemma}\label{lem:gamma(W)-proper}
The twig collection $\gamma(W)$ is proper for any proper twig collection $W$.
\end{lemma}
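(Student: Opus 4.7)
The plan is to induct on $s$, the number of twigs in $W$. When $s=1$, the definition gives $\gamma(W)=W$, which is proper by hypothesis. So suppose $s\ge 2$, let $t_1,\dots,t_s$ be the twigs of $W$ in increasing order of parent label, and let $C,D,E$ be as in \cref{def:gamma-operation}, so that
\[\gamma(W)=\operatorname{rel}_{C\setminus D}(\gamma(W\setminus\{t_s\}))\cup\{\alpha_E(t_s)\}.\]
By the inductive hypothesis, $\gamma(W\setminus\{t_s\})$ is a proper twig collection. We must verify two things about $\gamma(W)$: that all its vertex labels are distinct, and that each of its twigs is proper (i.e., in each twig, every child label exceeds the parent label).

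Distinctness of the labels of $\gamma(W)$ is immediate from \cref{lem:gamma-child-vertex-labels-permutation}, since $\gamma(W)$ differs from $W$ only by a permutation of its child vertex labels and $W$ is proper. For propriety of the twig $\alpha_E(t_s)$, note that this twig has the same parent label as $t_s$, and its child label set $D$ is by construction a subset of $E$, which consists precisely of those labels in $C$ that exceed the parent label of $t_s$; hence every child label of $\alpha_E(t_s)$ exceeds its parent label, as required.

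For propriety of the remaining twigs, namely those in $\operatorname{rel}_{C\setminus D}(\gamma(W\setminus\{t_s\}))$, the key tool is \cref{lem:specialrel}, which tells us that all vertex labels of $\gamma(W\setminus\{t_s\})$ and $\operatorname{rel}_{C\setminus D}(\gamma(W\setminus\{t_s\}))$ appear in the same relative order. Applying $\operatorname{rel}_{C\setminus D}$ preserves each parent label, so fix a twig in $\gamma(W\setminus\{t_s\})$ with parent label $p$ and any child vertex $v$ with label $c$; by the inductive hypothesis, $c>p$. After applying $\operatorname{rel}_{C\setminus D}$, the child vertex $v$ receives some new label $c'$, while the parent label remains $p$. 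Since the relative order of all vertex labels is preserved and $p$ is unchanged, the inequality $c>p$ forces $c'>p$, establishing propriety of the twig containing $v$ in $\operatorname{rel}_{C\setminus D}(\gamma(W\setminus\{t_s\}))$.

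The only delicate point, and the one that makes the argument hinge on \cref{lem:specialrel} rather than on a direct computation, is that the relabeling $\operatorname{rel}_{C\setminus D}$ involves \emph{all} of the child vertices of $\gamma(W\setminus\{t_s\})$ simultaneously, so one cannot check propriety twig-by-twig just from the definition of $\operatorname{rel}$; it is the global statement that relative order is preserved that lets us compare each new child label with the (unchanged) parent label of its own twig. Once this comparison is in hand, the inductive step closes, completing the proof.
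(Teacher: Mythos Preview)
Your proof is correct and follows essentially the same approach as the paper's: induct on $s$, invoke \cref{lem:gamma-child-vertex-labels-permutation} for distinctness, check propriety of $\alpha_E(t_s)$ directly from $D\subseteq E$, and use \cref{lem:specialrel} to transfer propriety from $\gamma(W\setminus\{t_s\})$ to $\operatorname{rel}_{C\setminus D}(\gamma(W\setminus\{t_s\}))$. Your final paragraph is a helpful gloss on why \cref{lem:specialrel} is needed, but it adds no new ingredient beyond what the paper uses.
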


\begin{proof}
Using the notation of \cref{def:gamma-operation}, we write $W=\{t_1,\dots,t_s\}$, where $s\ge 1$. We induct on $s$, with the base case $s=1$ being trivial, since in this case $\gamma(W)=W$. Now assume $s\ge 2$, and suppose the statement is true for all proper twig collections with $s-1$ twigs. We know that $W\setminus\{t_s\}$ is a proper twig collection, and by the inductive hypothesis, so is $\gamma(W\setminus\{t_s\})$. In particular, the label of each child vertex of $\gamma(W\setminus\{t_s\})$ is greater than the label of its parent. By \cref{lem:specialrel}, the same is true for $\operatorname{rel}_{C\setminus D}(\gamma(W\setminus\{t_s\}))$. This is also true for the twig collection $\{\alpha_E(t_s)\}$. Since all the vertex labels of $\gamma(W)$ are distinct by \cref{lem:gamma-child-vertex-labels-permutation}, we see that $\gamma(W)=\operatorname{rel}_{C\setminus D}(\gamma(W\setminus\{t_s\}))\cup\{\alpha_E(t_s)\}$ must be a proper twig collection.
\end{proof}

\begin{proposition}\label{prop:gamma-involution}
We have $\gamma(\gamma(W))=W$ for all proper twig collections $W$. Thus, $\gamma$ induces an involution $\{\emph{proper twig collections}\} \to \{\emph{proper twig collections}\}$.
\end{proposition}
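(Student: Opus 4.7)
The plan is to induct on $s=|W|$. The base case $s=1$ is immediate since $\gamma$ fixes singleton collections. For the inductive step, assume $s\ge 2$ and $\gamma(\gamma(W'))=W'$ for every proper twig collection $W'$ with fewer than $s$ twigs. Write $W=\{t_1,\dots,t_s\}$ in order of increasing parent label, set $W'=W\setminus\{t_s\}$, and keep the notation $C,E,D$ from \cref{def:gamma-operation}. Let $U=\gamma(W')$ and $\tilde U=\operatorname{rel}_{C\setminus D}(U)$, so that $\gamma(W)=\tilde U\cup\{\alpha_E(t_s)\}$.

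To evaluate $\gamma(\gamma(W))$, I unwind the recursive definition. Since $\operatorname{rel}$ preserves parent labels, the parent labels of $\tilde U$ coincide with those of $W'$, all strictly less than the parent label $p_s$ of $\alpha_E(t_s)$; hence $\alpha_E(t_s)$ is the twig of $\gamma(W)$ with the largest parent label. By \cref{lem:gamma-child-vertex-labels-permutation} the child label set of $\gamma(W)$ equals $C$, so the subset of $C$ exceeding $p_s$ used in the next recursive step is again $E$. The operation $\alpha_E$ is visibly an involution on twigs whose children lie in $E$, so the $\alpha$-step in the recursion recovers $t_s$ from $\alpha_E(t_s)$, and the set $D_0$ (the child label set of $t_s$) plays the role of ``$D$'' at this stage. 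Therefore
\[\gamma(\gamma(W))=\operatorname{rel}_{C\setminus D_0}(\gamma(\tilde U))\cup\{t_s\},\]
and it suffices to show $\operatorname{rel}_{C\setminus D_0}(\gamma(\tilde U))=W'$.

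The remaining ingredient is that $\gamma$ commutes with any order-preserving, parent-label-preserving relabeling of child vertices. Granting this, \cref{lem:specialrel} tells us $\tilde U$ is obtained from $U$ by just such a relabeling, so $\gamma(\tilde U)=\operatorname{rel}_{C\setminus D}(\gamma(U))=\operatorname{rel}_{C\setminus D}(W')$, where the second equality uses the inductive hypothesis $\gamma(U)=\gamma(\gamma(W'))=W'$. Because $W'$ already has child label set $C\setminus D_0$ and $\operatorname{rel}_T$ depends only on its target set and the relative order of its input's child labels, we get $\operatorname{rel}_{C\setminus D_0}(\operatorname{rel}_{C\setminus D}(W'))=\operatorname{rel}_{C\setminus D_0}(W')=W'$. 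This completes the induction and yields $\gamma(\gamma(W))=W'\cup\{t_s\}=W$.

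The main obstacle is establishing the commutation property of $\gamma$ with order-preserving, parent-label-preserving relabelings $\phi$. I would prove it by a parallel induction on $s$: the twig of largest parent label, the sets $C$ and $E$, and the child label set $D$ are all compatible with $\phi$, and a direct check shows $\alpha_{\phi(E)}(\phi(t))=\phi(\alpha_E(t))$ because $\alpha$ only cares about the positions of child labels inside $E$. The recursive call then reduces to a smaller instance, and the outer $\operatorname{rel}$-step is compatible with $\phi$ since $\operatorname{rel}_T$ depends only on $T$ and the relative order of the argument's child labels. Aside from this compatibility check, everything else in the argument is routine bookkeeping.
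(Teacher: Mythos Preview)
Your proof is correct and follows essentially the same inductive approach as the paper's. The only difference is presentational: you isolate the fact that $\gamma$ depends only on the relative order of all vertex labels as a separate ``commutation'' lemma to be proved by a parallel induction, whereas the paper invokes this same fact inline within the main induction (citing \cref{lem:specialrel} and arguing that the two relevant twig collections have the same relative order and the same label set, hence are equal).
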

\begin{remark}
Note that $\gamma(\gamma(W))$ is well-defined by \cref{lem:gamma(W)-proper}.
\end{remark}
\begin{proof}
We prove that $\gamma(\gamma(W))=W$ by inducting on the number of twigs of $W$. If $W$ consists of $1$ twig, then clearly $\gamma(\gamma(W))=W$. Otherwise, suppose $W$ has $s\ge 2$ twigs, and suppose $\gamma(\gamma(W'))=W'$ for all proper twig collections $W'$ with $s-1$ twigs. As in \cref{def:gamma-operation}, let $W=\{t_1, t_2, \dots, t_s\}$, where the twigs $t_1,\dots,t_s$ are ordered in increasing order of parent label. Note that the twig in $\gamma(W)$ with the largest parent label is $\alpha_E(t_s)$. Let $C$ be the child label set of $W$, which is also the child label set of $\gamma(W)$ by \cref{lem:gamma-child-vertex-labels-permutation}. Furthermore, let $E$ be the subset of $C$ consisting of labels that are larger than the parent label of $t_s$. Note the parent label of $t_s$ equals the parent label of $\alpha_E(t_s)$. Let $D_0$ be the child label set of $t_s$, and let $D$ be the child label set of $\alpha_E(t_s)$.

It is clear that $\alpha_E(\alpha_E(t_s))=t_s$, so the child label set of $\alpha_E(\alpha_E(t_s))$ is $D_0$. We have
\begin{align*}
    \gamma(\gamma(W))&=\gamma(\operatorname{rel}_{C\setminus D}(\gamma(W\setminus\{t_s\}))\cup\{\alpha_E(t_s)\})\\
    &=\operatorname{rel}_{C\setminus D_0}(\gamma(\operatorname{rel}_{C\setminus D}(\gamma(W\setminus\{t_s\}))))\cup\{\alpha_E(\alpha_E(t_s))\}\\
    &=\operatorname{rel}_{C\setminus D_0}(\gamma(\operatorname{rel}_{C\setminus D}(\gamma(W\setminus \{t_s\})))) \cup \{t_s\}.
\end{align*}

We claim that $\gamma(\operatorname{rel}_{C\setminus D}(\gamma(W\setminus \{t_s\})))=\operatorname{rel}_{C\setminus D}(W\setminus \{t_s\})$. To see this, recall that $\operatorname{rel}_{C\setminus D}$, when applied to $\gamma(W \setminus \{t_s\})$, keeps all vertex labels in the same relative order by \cref{lem:specialrel}. Both $\gamma(W \setminus \{t_s\})$ and $\operatorname{rel}_{C\setminus D}(\gamma(W \setminus \{t_s\}))$ are proper, so we can apply $\gamma$ to each one, and we see that $\gamma(\gamma(W\setminus \{t_s\}))=W\setminus\{t_s\}$ and $\gamma(\operatorname{rel}_{C\setminus D}(\gamma(W\setminus \{t_s\})))$ also have all their vertex labels in the same relative order (and each one has all its vertex labels distinct by \cref{lem:gamma-child-vertex-labels-permutation}). The equality $\gamma(\gamma(W\setminus\{t_s\}))=W\setminus\{t_s\}$ follows from the inductive hypothesis. By an argument similar to the one used to prove \cref{lem:specialrel}, the twig collections $W\setminus\{t_s\}$ and $\operatorname{rel}_{C\setminus D}(W\setminus\{t_s\})$ each have all their labels distinct, and they have all their vertex labels in the same relative order. Therefore, $\gamma(\operatorname{rel}_{C\setminus D}(\gamma(W\setminus \{t_s\})))$ and $\operatorname{rel}_{C\setminus D}(W\setminus\{t_s\})$ have all their vertex labels in the same relative order. Since these two twig collections also have the same set of all vertex labels, they are equal.

We then arrive at 
\begin{align*}
    \gamma(\gamma(W))&=\operatorname{rel}_{C\setminus D_0}(\gamma(\operatorname{rel}_{C\setminus D}(\gamma(W\setminus \{t_s\})))) \cup \{t_s\}\\
    &=\operatorname{rel}_{C\setminus D_0}(\operatorname{rel}_{C\setminus D}(W\setminus \{t_s\})) \cup \{t_s\}\\
    &=(W\setminus \{t_s\}) \cup \{t_s\}\\
    &=W,
\end{align*}
and therefore $\gamma$ induces an involution on the set of proper twig collections.
\end{proof}

A key result we will use to prove that $1423$ and $1324$ are strong-c-forest-Wilf equivalent is the following:

\begin{proposition}\label{prop:twig}
Let $W$ be a proper twig collection. Then, $F_{1423}(W)=F_{1324}(\gamma(W))$ and $T_{1423}(W)=T_{1324}(\gamma(W))$.
\end{proposition}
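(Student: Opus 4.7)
The plan is to prove both equalities simultaneously by strong induction on $|W|$. The base case $|W|=1$ is immediate since $\gamma(W)=W$ and $F_\sigma(W)=T_\sigma(W)=1$ for any $\sigma$.

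For the inductive step, write $W=\{t_1,\dots,t_s\}$ in increasing order of parent label, let $C$ denote the total child label set of $W$, and write $C_t$ for the child label set of any twig $t$. The first goal is to establish the recursions
\[F_\sigma(W)=F_\sigma(W\setminus\{t_s\})\cdot\bigl(1+N_\sigma(W,t_s)\bigr),\qquad T_\sigma(W)=T_\sigma(W\setminus\{t_s\})\cdot N_\sigma(W,t_s),\]
where $N_\sigma(W,t_s)$ counts the child vertices of $W\setminus\{t_s\}$ at which $t_s$ can be attached while preserving $\sigma$-niceness. The key structural point is that since $p_s$ is the maximum parent label in $W$, no twig can be attached to a child of $t_s$ in a $\sigma$-nice forest (such an attached twig would need parent label exceeding $p_s$), so $t_s$ is either a depth-$1$ standalone tree or is attached to a single child vertex $v$ of another twig, with no further constraints on the rest of the forest. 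The ``$+1$'' in the $F$-recursion accounts for the standalone case; this is forbidden in the $T$-recursion, since the root of a $\sigma$-nice tree necessarily carries the minimum parent label. Unpacking the pattern condition for the chain ending at each child of $t_s$, such an attachment requires only $\text{label}(v)>\max(C_{t_s})$ when $\sigma=1423$, and $p_s<\text{label}(v)<\min(C_{t_s})$ when $\sigma=1324$.

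The crux is verifying $N_{1423}(W,t_s)=N_{1324}(\gamma(W),\alpha_E(t_s))$. Let $E=\{x_1<\dots<x_\ell\}$ be the labels of $C$ exceeding $p_s$, so (since $t_s$ is proper) $C_{t_s}=\{x_{i_1}<\dots<x_{i_k}\}\subseteq E$. The eligible labels for $N_{1423}(W,t_s)$ are precisely $\{x_{i_k+1},\dots,x_\ell\}$, yielding the count $\ell-i_k$. On the other side, $C_{\alpha_E(t_s)}=\{x_{\ell+1-i_1},\dots,x_{\ell+1-i_k}\}$ has minimum $x_{\ell+1-i_k}$, and every label of $C_{\alpha_E(t_s)}$ has index at least $\ell+1-i_k$, so $N_{1324}(\gamma(W),\alpha_E(t_s))$ counts exactly the indices $1\le j\le \ell-i_k$, again producing $\ell-i_k$.

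To assemble, the quantities $F_\sigma$ and $T_\sigma$ depend only on the relative order of the vertex labels of the twig collection, so \cref{lem:specialrel} yields $F_{1324}(\gamma(W\setminus\{t_s\}))=F_{1324}(\operatorname{rel}_{C\setminus D}(\gamma(W\setminus\{t_s\})))=F_{1324}(\gamma(W)\setminus\{\alpha_E(t_s)\})$, and similarly for $T_{1324}$, where $D=C_{\alpha_E(t_s)}$. Since $\gamma$ preserves parent labels, $\alpha_E(t_s)$ has the largest parent label in $\gamma(W)$, so the same recursion applies on the $\gamma(W)$ side. Combining the inductive hypothesis $F_{1423}(W\setminus\{t_s\})=F_{1324}(\gamma(W\setminus\{t_s\}))$ with the two recursions and the equality of the $N$-values yields $F_{1423}(W)=F_{1324}(\gamma(W))$; the $T$-case is identical. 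The main obstacles are the structural argument that $t_s$ has no attached sub-twig (which makes the recursion clean) and the careful index bookkeeping linking the $1423$-count and the $1324$-count through $\gamma$.
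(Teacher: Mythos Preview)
Your proof is correct and follows essentially the same approach as the paper's. Both induct on the number of twigs, remove the twig $t_s$ with largest parent label, observe that no twig can hang below $t_s$ (so removal yields a clean recursion), count the valid attachment points for $t_s$, and then use \cref{lem:specialrel} to transport the $\gamma(W\setminus\{t_s\})$ computation to $\gamma(W)\setminus\{\alpha_E(t_s)\}$. The only cosmetic difference is that the paper normalizes the label set to $[n]$ and writes the multiplicative factors explicitly as $n-d+1$, $n-d$, $c-p$, $c-p-1$, whereas you package the same counts as $N_\sigma(W,t_s)$ and track them via indices inside $E$; your $\ell-i_k$ is exactly the paper's $n-d$.
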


\begin{proof}
Let $W$ have $n$ total vertices and $s$ twigs. We induct on $s$. If $s=1$, the result is clear, as $F_{1423}(W)=T_{1423}(W)=F_{1324}(\gamma(W))=T_{1324}(\gamma(W))=1$. So, suppose $W$ has $s\ge 2$ twigs, and suppose the statement is true for proper twig collections containing exactly $s-1$ twigs.

Without loss of generality, suppose that $W$ has label set $[n]$. We denote the twigs of $W$ as $t_1, t_2, \dots, t_s$, ordered in increasing order of parent labels, and let $C$ be the child label set of $W$. Let $p$ be the parent label of $t_s$, and let $c$ and $d$ be the smallest and largest labels of the child label set of $t_s$, respectively. Let $E=\{p+1,p+2,\dots,n\}$ be the subset of the child label set of $W$ that consists of labels that are larger than $p$, so that the twig of $\gamma(W)$ with the largest parent label is $\alpha_E(t_s)$. Note that the smallest and largest members of the child label set of $\alpha_E(t_s)$ are $n+p+1-d$ and $n+p+1-c$, respectively. Let $D$ be the child label set of the twig $\alpha_E(t_s)$.

We will derive the relations
\begin{align}
    F_{1423}(W)&=(n-d+1)F_{1423}(W\setminus\{t_s\}),\label{eq:1423-forests}\\
    T_{1423}(W)&=(n-d)T_{1423}(W\setminus\{t_s\}),\label{eq:1423-trees}\\
    F_{1324}(W)&=(c-p)F_{1324}(W\setminus\{t_s\}),\label{eq:1324-forests}\\
    T_{1324}(W)&=(c-p-1)T_{1324}(W\setminus\{t_s\}).\label{eq:1324-trees}
\end{align}
Given these relations, plugging in $\gamma(W)$ for $W$ in \cref{eq:1324-forests,eq:1324-trees}, and noting from \cref{lem:specialrel} that the proper twig collections $\gamma(W \setminus \{t_s\})$ and $\operatorname{rel}_{C \setminus D}(\gamma(W \setminus \{t_s\}))$ have all their vertex labels in the same relative order, we find
\begin{align*}
F_{1324}(\gamma(W))&=(n-d+1)F_{1324}(\operatorname{rel}_{C\setminus D}(\gamma(W\setminus\{t_s\})))=(n-d+1)F_{1324}(\gamma(W \setminus \{t_s\})),\\
T_{1324}(\gamma(W))&=(n-d)T_{1324}(\operatorname{rel}_{C\setminus D}(\gamma(W\setminus\{t_s\})))=(n-d)T_{1324}(\gamma(W \setminus \{t_s\})).
\end{align*}
After applying the inductive hypothesis, the result then follows by comparing these two equations with \cref{eq:1423-forests,eq:1423-trees}. It remains to show \cref{eq:1423-forests,eq:1423-trees,eq:1324-forests,eq:1324-trees}.

Consider a $1423$-nice forest (resp.\ tree) constructed from $W$. There cannot be a twig of $W$ whose parent vertex is a child of a child vertex of $t_s$, as this would violate the $1423$-nice condition. Therefore, removing twig $t_s$ from our $1423$-nice forest (resp.\ tree) gives a $1423$-nice forest (resp.\ tree) constructed from $W\setminus\{t_s\}$. Conversely, suppose we are given a $1423$-nice forest constructed from $W\setminus\{t_s\}$. To add twig $t_s$ to form a $1423$-nice forest constructed from $W$, we must choose some child vertex of $W\setminus\{t_s\}$ to be the parent of the parent vertex of $t_s$, or decide that twig $t_s$ forms its own tree. We may choose any child vertex if and only if its label is greater than $d$, and conversely each of the labels $d+1,d+2,\dots,n$ is the label of some child vertex of $W\setminus\{t_s\}$. Thus, there are exactly $n-d+1$ ways to add $t_s$ to the $1423$-nice forest constructed from $W\setminus\{t_s\}$ to form a $1423$-nice forest constructed from $W$. This proves \cref{eq:1423-forests}.

The proof of \cref{eq:1423-trees} is similar; everything is the same as in the proof of \cref{eq:1423-forests}, except that the twig $t_s$ cannot form its own tree, giving $n-d$ ways instead of $n-d+1$ ways.

Now consider a $1324$-nice forest (resp.\ tree) constructed from $W$. Twig $t_s$ has the largest parent label among the twigs in $W$, so there cannot be a twig with its parent vertex attached to a child vertex of $t_s$, and removing $t_s$ gives a $1324$-nice forest (resp.\ tree) constructed from $W\setminus\{t_s\}$. Conversely, given a $1324$-nice forest constructed from $W\setminus\{t_s\}$, to add $t_s$ to form a $1324$-nice forest constructed from $W$, we must choose some child vertex of $W\setminus\{t_s\}$ to be the parent of the parent vertex of $t_s$, or decide that twig $t_s$ forms its own tree. Such a child vertex gives us a valid $1324$-nice forest if and only if its label is greater than $p$ but less than $c$, and each of the labels $p+1,p+2,\dots,c-1$ appears as the label of some child vertex of $W\setminus\{t_s\}$. There are $c-p-1$ such labels, so there are $c-p$ total ways to add $t_s$. This holds for any $1324$-nice forest constructed from $W\setminus\{t_s\}$, so \cref{eq:1324-forests} holds. In the case of $1324$-nice trees, using similar reasoning, there are only $c-p-1$ ways to add $t_s$, and we obtain \cref{eq:1324-trees}.
\end{proof}

\subsubsection{Creating forest clusters from $\sigma$-nice trees}\label{subsubsec:forest-clusters-nice-trees}
We now relate the counts of $1423$-nice trees and $1324$-nice trees. We will then use these counts to count forest clusters with respect to $1423$ and $1324$.

\begin{definition}
For nonnegative integers $n$ and $m$, let $A_\sigma(n, m)$ be the number of $\sigma$-nice trees on $[n]$ containing exactly $m$ consecutive instances of $\sigma$.
\end{definition}
\begin{proposition}\label{prop:A1423=A1324}
For all $n,m\ge 0$, we have $A_{1423}(n,m)=A_{1324}(n,m)$.
\end{proposition}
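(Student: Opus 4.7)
The plan is to refine \cref{prop:twig} so that the number of consecutive instances of $\sigma$ is tracked alongside the twig collection. Define $T_\sigma(W, m)$ as the number of $\sigma$-nice trees constructed from a proper twig collection $W$ with exactly $m$ consecutive instances of $\sigma$. By \cref{def:nice-decomposition}, each $\sigma$-nice tree on $[n]$ decomposes uniquely into a proper twig collection on $[n]$, so $A_\sigma(n, m) = \sum_W T_\sigma(W, m)$ where the sum is over proper twig collections with label set $[n]$. Because $\gamma$ is an involution on such collections (\cref{prop:gamma-involution}) and preserves the label set (\cref{lem:gamma-child-vertex-labels-permutation}), it suffices to establish $T_{1423}(W, m) = T_{1324}(\gamma(W), m)$ for every proper twig collection $W$.

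The first key observation is that for $\sigma \in \{1423, 1324\}$, every consecutive instance of $\sigma$ in a $\sigma$-nice tree must end at a vertex of even depth. Indeed, if a length-$4$ consecutive instance $v_1 v_2 v_3 v_4$ had its endpoint $v_4$ at odd depth $d \geq 5$, then $v_3$ would lie at even depth $d - 1 \geq 4$, and by the $\sigma$-nice condition the label of $v_3$ exceeds that of $v_2$. But $\sigma(2) > \sigma(3)$ for both $1423$ and $1324$, requiring the opposite inequality. Combined with the fact that each even-depth vertex at depth $\geq 4$ is the endpoint of exactly one consecutive instance of $\sigma$ (the one along its unique ancestor path), the number of consecutive instances in a tree built from $W$ equals the number of child vertices of $W$ at depth $\geq 4$, i.e., (total number of child vertices of $W$) minus (number of children of the root twig).

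Now let $n$ be the number of vertices of $W$, order $W = \{t_1, \ldots, t_s\}$ by increasing parent label, and let $t_s$ have parent label $p$, smallest and largest child labels $c$ and $d$, and $k$ children. If $s \geq 2$, as in the proof of \cref{prop:twig}, $t_s$ cannot be the root twig: otherwise any other twig would have to attach below a child of $t_s$, and the resulting consecutive instance would force its parent label to exceed $p$. Hence every $\sigma$-nice tree on $W$ is obtained from a $\sigma$-nice tree on $W \setminus \{t_s\}$ by attaching $t_s$ below some eligible child vertex, which leaves the root twig unchanged while introducing $k$ new child vertices at even depth $\geq 4$; by the first observation this adds exactly $k$ consecutive instances. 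The number of eligible positions is $n - d$ for $\sigma = 1423$ and $c - p - 1$ for $\sigma = 1324$, as in the proof of \cref{prop:twig}, yielding
\begin{align*}
T_{1423}(W, m) &= (n - d)\, T_{1423}(W \setminus \{t_s\}, m - k),\\
T_{1324}(W, m) &= (c - p - 1)\, T_{1324}(W \setminus \{t_s\}, m - k).
\end{align*}

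The identity $T_{1423}(W, m) = T_{1324}(\gamma(W), m)$ then follows by induction on $s$. The base case $s = 1$ is immediate since both sides equal $[m = 0]$ and $\gamma$ fixes single-twig collections. For the inductive step, the twig of $\gamma(W)$ with largest parent label is $\alpha_E(t_s)$, which has the same parent label $p$ and $k$ children as $t_s$, and whose smallest child label equals $n + p + 1 - d$ (by the definition of $\alpha_E$ applied to $E = \{p+1, \ldots, n\}$). Applying the $1324$ recurrence to $\gamma(W)$ therefore yields the coefficient $(n + p + 1 - d) - p - 1 = n - d$, matching the $1423$ coefficient for $W$. Moreover $\gamma(W) \setminus \{\alpha_E(t_s)\} = \operatorname{rel}_{C \setminus D}(\gamma(W \setminus \{t_s\}))$ has the same relative order of labels as $\gamma(W \setminus \{t_s\})$ by \cref{lem:specialrel}, so their $T_{1324}(\cdot, m - k)$ counts agree, and the inductive hypothesis closes the loop. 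The main obstacle is the ``endpoints lie at even depth'' characterization, which is specific to the shapes of $1423$ and $1324$ and is precisely what allows the recurrences to track consecutive instances purely through the twig data.
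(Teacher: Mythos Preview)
Your proof is correct and rests on the same key observation as the paper: in a $\sigma$-nice tree for $\sigma\in\{1423,1324\}$, the consecutive instances of $\sigma$ end exactly at the even-depth vertices of depth at least $4$, so the number $m$ of instances is determined by the twig decomposition $W$ alone---it equals the number of child vertices of $W$ lying outside the root twig $t_1$. The paper simply states this characterization and then sums $T_\sigma(W)$ over those $W$ with the right count, invoking \cref{prop:twig}, \cref{prop:gamma-involution}, and the fact that $\gamma$ preserves the number of children of each twig (\cref{lem:gamma-child-vertex-labels-permutation}). You instead fold the observation into refined recurrences for $T_\sigma(W,m)$ and re-run the induction of \cref{prop:twig}. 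This is correct but more work than needed: once you know that $T_\sigma(W,m)=T_\sigma(W)\cdot[m=m(W)]$ with $m(W)$ a $\gamma$-invariant function of $W$, the result is immediate from \cref{prop:twig} without re-deriving the recurrences.
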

\begin{proof}
Fix $n, m$. Let the pattern $\sigma$ be either $1423$ or $1324$, and let $T$ be a $\sigma$-nice tree on $[n]$. Suppose $T$ decomposes into the proper twig collection $W$, as defined in \cref{def:nice-decomposition}. Let $W=\{t_1,\dots,t_s\}$, where the twigs are ordered in increasing order of parent label. Note that the tree $T$ contains exactly $m$ consecutive instances of $\sigma$ if and only if there are exactly $m$ child vertices of $W$ that are not vertices of $t_1$. This follows from the observation that a vertex of $T$ is the endpoint of a consecutive instance of $\sigma$ if and only if it corresponds to a child vertex of a twig $t_j$ of $W$ such that $j>1$, as the root vertex of $T$ necessarily corresponds to the parent vertex of $t_1$. Therefore, $A_\sigma(n,m)$ equals the sum of $T_\sigma(W)$ for all proper twig collections $W$ such that $W$ has label set $[n]$ and there are exactly $m$ child vertices of $W$ that are not vertices of $t_1$, the twig of $W$ with the smallest parent label. By \cref{lem:gamma-child-vertex-labels-permutation}, the proper twig collection $W$ satisfies this property if and only if $\gamma(W)$ does. So, the result follows from \cref{prop:gamma-involution,prop:twig}.
\end{proof}

We now make the following definition:
\begin{definition}
For nonnegative integers $n$ and $m$, let $B_\sigma(n, m)$ be the number of $\sigma$-nice trees on $[n]$ containing exactly $m$ consecutive instances of $\sigma$, such that there are no childless depth-$2$ vertices.
\end{definition}

\begin{proposition}\label{prop:B1423=B1324}
For all $n,m\ge 0$, we have $B_{1423}(n, m)=B_{1324}(n, m)$.
\end{proposition}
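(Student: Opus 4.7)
The proof parallels that of Proposition~\ref{prop:A1423=A1324}, with a refinement to incorporate the condition that no depth-2 vertex is childless. For a proper twig collection $W$, let $T^*_\sigma(W)$ denote the number of $\sigma$-nice trees constructed from $W$ in which every depth-2 vertex has at least one child. Because depth-2 vertices of such a tree correspond exactly to the child vertices of the twig $t_1$ of $W$ (whose parent vertex, having the smallest parent label, is always the root), this condition reduces to requiring that every child vertex of $t_1$ be the tree-parent of $p_j$ for some $t_j$ with $j \geq 2$. Then $B_\sigma(n,m) = \sum_W T^*_\sigma(W)$, where $W$ ranges over proper twig collections with label set $[n]$ and exactly $m$ child vertices among the twigs $t_2, \ldots, t_s$. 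Because the involution $\gamma$ (Proposition~\ref{prop:gamma-involution}) preserves both the label set and the count of such child vertices (Lemma~\ref{lem:gamma-child-vertex-labels-permutation}), the proposition reduces to showing $T^*_{1423}(W) = T^*_{1324}(\gamma(W))$ for every proper twig collection $W$.

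My plan for proving this refined identity is to perform inclusion-exclusion on the set of uncovered depth-2 vertices. Let $C_1 = C_1(W)$ denote the child label set of $t_1$, and for each $S \subseteq C_1$ let $N_\sigma(W, S)$ be the number of $\sigma$-nice trees constructed from $W$ such that every $v \in S$ is a childless depth-2 vertex. By inclusion-exclusion,
\[
T^*_\sigma(W) = \sum_{S \subseteq C_1} (-1)^{|S|}\, N_\sigma(W, S).
\]
Each $N_\sigma(W, S)$ admits a recurrence analogous to that of $T_\sigma$ in Proposition~\ref{prop:twig}, obtained by attaching $t_s$ last only to valid child vertices outside $S$: for $\sigma = 1423$, the number of valid attachments becomes $(n-d) - |S \cap \{d+1, \ldots, n\}|$, and for $\sigma = 1324$ applied to $\gamma(W)$, an analogous restriction applies to the corresponding interval $(p,c)$. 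I would then pair the elements of $C_1(W)$ with those of $C_1(\gamma(W))$ via a bijection $\gamma^*$ induced by the chain of $\operatorname{rel}$ operations appearing in the definition of $\gamma$, and induct on the number of twigs to conclude $N_{1423}(W, S) = N_{1324}(\gamma(W), \gamma^*(S))$ for every $S$.

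The main obstacle is that $\gamma$ does not generally preserve $C_1$: the twig of $\gamma(W)$ with the smallest parent label typically has a different child label set from $t_1$ in $W$. One must therefore construct the bijection $\gamma^*\colon C_1(W) \to C_1(\gamma(W))$ explicitly from the recursive definition of $\gamma$ and verify its compatibility with the refined recurrence---in particular, checking that $|S \cap \{d+1, \ldots, n\}|$, the number of 1423-attachments forbidden by $S$, equals the corresponding count for $\gamma^*(S)$ in the 1324 setting at each level of the induction. Once this compatibility is established, the inclusion-exclusion sums over $S$ match term by term, yielding $T^*_{1423}(W) = T^*_{1324}(\gamma(W))$ and hence $B_{1423}(n,m) = B_{1324}(n,m)$.
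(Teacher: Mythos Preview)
Your plan has a genuine gap: the bijection $\gamma^*$ you propose---the one ``induced by the chain of $\operatorname{rel}$ operations''---does not satisfy the compatibility you need. Concretely, take $W=\{(1,\{3,5\}),(2,\{4\})\}$. Here $\gamma(W)=W$, so the tracking bijection is the identity on $C_1=\{3,5\}$. But $N_{1423}(W,\{3\})=T_{1423}(\{(1,\{5\}),(2,\{4\})\})=1$ while $N_{1324}(W,\{3\})=T_{1324}(\{(1,\{5\}),(2,\{4\})\})=0$, so the term for $S=\{3\}$ does not match. (The sums over $S$ do agree, and in this example the transposition $3\leftrightarrow 5$ happens to match terms, but that is not the map your recursion produces.) The underlying issue is that removing vertices from $t_1$ changes the set $E$ used in $\alpha_E$, so $\gamma$ does not commute with deletion from $t_1$; consequently $\gamma(W_S)$ and $\gamma(W)_{\gamma^*(S)}$ need not have labels in the same relative order. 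You have correctly identified this as ``the main obstacle,'' but your proposed resolution does not work, and you have not supplied an alternative.

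The paper's proof sidesteps all of this by working one level up. It observes that in any $\sigma$-nice tree on $[n]$ the root is labeled $1$, so deleting the set $L$ of childless depth-$2$ vertices yields either the single root (when $L=\{2,\dots,n\}$) or a $\sigma$-nice tree on $[n]\setminus L$ with no childless depth-$2$ vertices and the same number of consecutive instances of $\sigma$. This gives
\[
A_\sigma(n,m)=\mathbf{1}_{m=0}+\sum_{i=0}^{n-1}\binom{n-1}{i}B_\sigma(n-i,m)\qquad(n\ge 2),
\]
an upper-triangular linear system determining $B_\sigma(n,m)$ from the $A_\sigma(n',m)$ for $n'\le n$. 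Since the system is identical for $\sigma=1423$ and $\sigma=1324$, Proposition~\ref{prop:A1423=A1324} immediately yields $B_{1423}=B_{1324}$. No refinement of $\gamma$, no bijection $\gamma^*$, and no per-twig-collection argument is needed.
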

\begin{proof}
Let the pattern $\sigma$ be either $1423$ or $1324$. Given $L\subseteq [n]$, let $A_\sigma(n,m,L)$ equal the number of $\sigma$-nice trees $T$ on $[n]$ containing exactly $m$ consecutive instances of $\sigma$ such that $L$ equals the set of labels of the childless depth-$2$ vertices. If $n=1$ or $1\in L$, then trivially $A_\sigma(n,m,L)=0$, because any $\sigma$-nice tree has at least two vertices and has its root vertex labeled $1$. So, suppose $n\ge 2$ and $1\notin L$. If we remove the childless depth-$2$ vertices from such a tree, the remaining rooted tree either
\begin{enumerate}[label=(\arabic*)]
    \item only consists of the root vertex (which occurs exactly once if and only if $m=0$ and $L=\{2,\dots,n\}$, and otherwise does not occur), or
    \item forms a $\sigma$-nice tree on $[n]\setminus L$, with no childless depth-$2$ vertices, and containing exactly $m$ consecutive instances of $\sigma$.
\end{enumerate}
Conversely, given any $\sigma$-nice tree satisfying the conditions given in (2), we see that adding (childless) children to the root vertex and labeling them with the set $L$ yields a $\sigma$-nice tree on $[n]$ that is counted by $A_\sigma(n,m,L)$. So, for all $n,m$ and $L\subseteq[n]$, we find
\[A_\sigma(n,m,L)=\mathbf{1}_{n\ge 2}\mathbf{1}_{1\notin L}\left(\mathbf{1}_{m=0}\mathbf{1}_{L=\{2,\dots,n\}}+B_\sigma(n-|L|, m)\right).\]
Thus we obtain the recurrence
\[A_\sigma(n, m)=\sum_{L\subseteq[n]}A_\sigma(n, m, L)=\mathbf{1}_{n\ge 2}\sum_{L\subseteq\{2,\dots,n\}}\left(\mathbf{1}_{m=0}\mathbf{1}_{L=\{2,\dots,n\}}+B_\sigma(n-|L|, m)\right).\]
Now fix $m\ge 0$. We induct on $n$ to obtain the desired result. Note that if $n=0,1$, then $B_{1423}(n,m)=B_{1324}(n,m)=0$. For $n\ge 2$, we have
\[A_\sigma(n,m)=\mathbf{1}_{m=0}+\sum_{i=0}^{n-1}\binom{n-1}{i}B_\sigma(n-i,m),\]
so $B_\sigma(n,m)=A_\sigma(n,m)-\mathbf{1}_{m=0}-\sum_{i=1}^{n-1}\binom{n-1}{i}B_\sigma(n-i,m)$. Therefore, $B_\sigma(n, m)$ is uniquely determined from $A_\sigma(n, m)$ and the values $B_\sigma(n', m)$ for $n'<n$. The desired result then follows from \cref{prop:A1423=A1324} and the fact that the recurrence and initial values are the same for $\sigma=1423$ and $\sigma=1324$.
\end{proof}

\begin{figure}
\centering
    \includegraphics{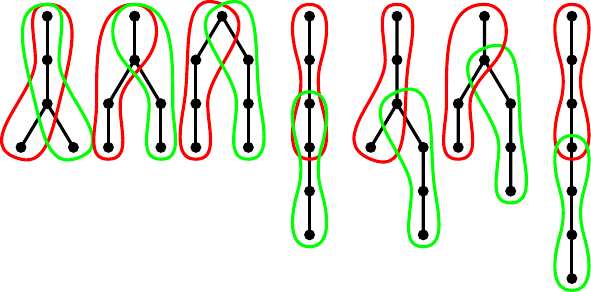}
    \caption{The seven ways for two consecutive instances of $\sigma=1423$ to overlap, which are the same for $\sigma=1324$.}
    \label{fig:sigma-overlap}
\end{figure}

We now turn our attention to relating forest clusters with respect to $\sigma$ to $\sigma$-nice trees, restricting our attention to the cases $\sigma=1423$ and $\sigma=1324$. One important observation is that in any forest cluster with respect to $\sigma=1423$ or $\sigma=1324$, the root vertex has the smallest label. For visual reference, the ways that two consecutive instances of $\sigma$ can overlap for $\sigma=1423$ or $\sigma=1324$ are given in \cref{fig:sigma-overlap}.

\begin{definition}
Let the pattern $\sigma$ be either $1423$ or $1324$, and let $X$ be a forest cluster with respect to $\sigma$. A \textit{$\sigma$-nice subtree} $T$ of $X$ is an induced subgraph of $X$ forming a $\sigma$-nice tree whose root is the root of $X$, and such that every even-depth vertex of $T$ with depth at least $4$ is the endpoint of a consecutive instance of $\sigma$ that is highlighted in $X$. The union of all such $\sigma$-nice subtrees is another $\sigma$-nice subtree, which we refer to as the \textit{maximum $\sigma$-nice subtree}, denoted $T_\mathrm{max}(X)$.
\end{definition}
Note that in the above definition, since $T$ has the same root vertex as $X$, the depth of a vertex of $T$ is unambiguous. Also, it is clear that any forest cluster contains at least one $\sigma$-nice subtree, so that $T_\mathrm{max}(X)$ is well-defined.

\begin{figure}
    \centering
    \includegraphics{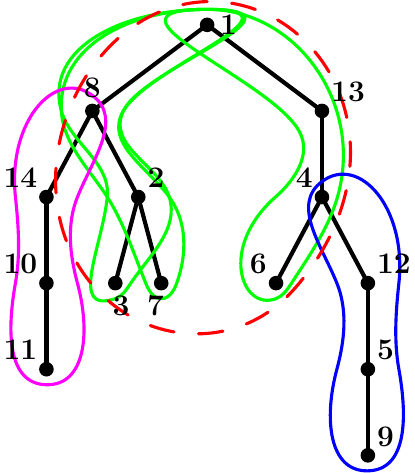}
    \caption{A $5$-forest cluster of size $14$ with respect to $1423$, with the maximum $1423$-nice subtree circled by the dashed line.}
    \label{fig:maximum-nice}
\end{figure}

An example is given in \cref{fig:maximum-nice}, which shows a $5$-forest cluster of size $14$ with respect to $\sigma=1423$. The maximum $\sigma$-nice subtree is circled by the dashed line, and consists of three highlighted consecutive instances of $\sigma$. If the consecutive instance corresponding to the labels $1, 13, 4, 12$ was also highlighted, making the cluster a $6$-forest cluster, then the maximum $\sigma$-nice subtree would also contain the vertices labeled $12, 5, 9$.

Clearly the root of $X$ and its children all lie in $T_\mathrm{max}(X)$. Furthermore, the $\sigma$-nice tree $T_\mathrm{max}(X)$ has no childless vertices of depth $2$.

\begin{definition}
Let the pattern $\sigma$ be either $1423$ or $1324$, and let $X$ be a forest cluster with respect to $\sigma$. Let \textit{$R(X)$} be the set of vertices of $T_\mathrm{max}(X)$ that have at least one child that does not lie in $T_\mathrm{max}(X)$.
\end{definition}

\begin{lemma}\label{lem:instances-intersecting-R(X)}
Let $\sigma$ be either $1423$ or $1324$, and let $X$ be a forest cluster with respect to $\sigma$. Let $v\in R(X)$, and let $u$ be a child of $v$ that does not lie in $T_\mathrm{max}(X)$. Then, there exists exactly one highlighted consecutive instance of $\sigma$ in $X$ that contains $v$ and $u$. Moreover, this consecutive instance has starting point $v$.
\end{lemma}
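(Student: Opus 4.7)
The plan is to split the proof into two parts: existence of a highlighted instance $I$ containing both $v$ and $u$ with $v$ at the starting position, and uniqueness of such an instance.

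For existence, I would exploit the connectivity of the forest cluster's overlap graph. Every highlighted instance lying entirely in the subtree of $X$ rooted at $u$ must have $u$ as its starting vertex, since an instance is a downward path and $u$ is the only vertex of this subtree with no proper ancestor in it. Because $X$ is a forest cluster, the overlap graph of its highlighted instances is connected, so there must exist a highlighted instance $I$ containing both a vertex of the subtree rooted at $u$ and a vertex strictly above $u$. Thus $I$ contains $u$ at a non-starting position, and hence $I$ contains $v$, the parent of $u$.

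Next, writing $I = (w_1, w_2, w_3, w_4)$ with $v = w_i$, $u = w_{i+1}$, I would rule out $i \in \{2, 3\}$. If $i = 3$, then $u = w_4$ is the endpoint of $I$, the label structure of $\sigma$ forces $u > v$, and $d_u$ is even (since $d_v$ is odd in this subcase). These are exactly the conditions needed to adjoin $u$ to $T_\mathrm{max}(X)$ as an even-depth vertex of depth $\geq 4$, contradicting $u \notin T_\mathrm{max}(X)$. If $i = 2$, then $I = (p, v, u, z)$ with $p$ the parent of $v$. The membership $v \in T_\mathrm{max}(X)$ forces the existence of another specific highlighted instance $I'$: the instance ending at a $T_\mathrm{max}$-child $w$ of $v$ when $d_v$ is odd (using the $\sigma$-nice requirement that $v$ has a $T_\mathrm{max}$-child of even depth $\geq 4$), or the instance ending at $v$ itself when $d_v$ is even and $\geq 4$. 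Comparing label constraints from $I$ and $I'$ at a shared ancestor yields a contradiction specific to $\sigma \in \{1423, 1324\}$: in both patterns the two instances impose inconsistent orderings on $p$ relative to $v$. The residual case $d_v = 2$ is handled by noting that the endpoint $z$ of $I$ is a child of $u$ of label larger than $u$, giving exactly the structure needed to include $u$ in an extension of $T_\mathrm{max}(X)$.

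For uniqueness, I would suppose two distinct highlighted instances $I_1 = (v, u, y_2, y_3)$ and $I_2 = (v, u, y_2', y_3')$ both contain $v$ and $u$ with $v$ as starting point, and aim for a contradiction by using the extra structure---particularly the endpoint properties of $y_3$ and $y_3'$---to extend $T_\mathrm{max}(X)$ and include $u$. I expect this uniqueness step to be the main obstacle, requiring a subtle casework on the parity of $d_v$ to exploit how $\sigma = 1423$ and $\sigma = 1324$ interact with the $\sigma$-nice conditions; the existence and ``starting point is $v$'' parts are comparatively direct given the cluster connectivity observation.
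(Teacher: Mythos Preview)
Your plan has a concrete error in the case $i=2$ with $d_v$ even and at least $4$. You propose to compare $I=(p,v,u,z)$ with the highlighted instance $I'$ ending at $v$, claiming they impose inconsistent orderings on $p$ versus $v$. But in $I'$ the pair $(p,v)$ sits in positions $(3,4)$, and for both $\sigma=1423$ and $\sigma=1324$ one has $\sigma(3)<\sigma(4)$; in $I$ they sit in positions $(1,2)$, and $\sigma(1)<\sigma(2)$. Both inequalities say the label of $p$ is smaller than the label of $v$, so no contradiction arises. (Your argument \emph{does} work when $d_v$ is odd, since then $(p,v)$ occupies positions $(2,3)$ in $I'$ and $\sigma(2)>\sigma(3)$.) The paper handles all even $d_v$ uniformly by the same extension argument you already give for $d_v=2$: then $u$ has odd depth, and $z=w_4$ is a child of $u$ with larger label (since $\sigma(4)>\sigma(3)$) which is the endpoint of the highlighted instance $I$, so one may adjoin both $u$ and $z$ to $T_{\max}(X)$, contradicting maximality. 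The fix is simply to drop the separate $d_v\geq 4$ subcase and use your $d_v=2$ argument for every even $d_v$. A smaller omission: in the $i=3$ case you assert $d_v$ is odd without justification; the reason is that $\sigma(3)<\sigma(2)$ forces the label of $v$ below that of its parent $v_2$, which for a vertex of $T_{\max}(X)$ is only possible at odd depth.

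On uniqueness: the paper's own proof reads ``It suffices to show $v\neq v_2,v_3$'' and stops there, establishing only that some highlighted instance through $v$ and $u$ exists and that \emph{every} such instance has $v$ as its starting point. That is all that is used downstream (it is what makes $T_{X,v}$ a forest cluster and yields the decomposition lemma). Your proposed uniqueness argument --- using two instances $(v,u,y_2,y_3)$ and $(v,u,y_2',y_3')$ to extend $T_{\max}(X)$ to include $u$ --- does not work as stated: if $d_v$ is even then $u$ has odd depth and its children $y_2,y_2'$ have labels \emph{smaller} than $u$ (since $\sigma(3)<\sigma(2)$), so neither can serve as the required even-depth child; if $d_v$ is odd then $u$ sits at position $2$ in both instances and is not the endpoint of either. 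You should simply omit the uniqueness step.
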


\begin{proof}
It follows from the definition of a forest cluster that there exists at least one highlighted consecutive instance of $\sigma$ that contains $v$ and $u$. Now suppose the sequence $v_1,v_2,v_3,v_4$ of vertices of $X$ is a highlighted consecutive instance of $\sigma$ such that $v_i=v$ and $v_{i+1}=u$ for some $i\in\{1,2,3\}$ (by the definition of a consecutive instance, $v_1$ is the parent of $v_2$, and so on). It suffices to show $v\neq v_2,v_3$.

Suppose first $v=v_3$. Then,the label of $v$ is less than the label of its parent $v_2$, and $v\in T_\mathrm{max}(X)$, so $v$ has odd depth. But we may then include $u$ in $T_\mathrm{max}(X)$, a contradiction.

Now if $v=v_2$ and $v$ has odd depth, then it must have a child $u'$ lying in $T_\mathrm{max}(X)$. The depth of $u'$ is even and at least $3$, so there exists a highlighted consecutive instance of $\sigma$ with $u'$ as its endpoint. But this implies the label of $v_2$ is less than the label of $v_1$, a contradiction. If $v=v_2$ and $v$ has even depth, then we may include $u$ and $v_4$ in $T_\mathrm{max}(X)$, again a contradiction.
\end{proof}

\begin{definition}
Let the pattern $\sigma$ be either $1423$ or $1324$, and let $X$ be a forest cluster with respect to $\sigma$. Let $v\in R(X)$, and let $u_1,\dots,u_s$ be the children of $v$ that do not lie in $T_\mathrm{max}(X)$. For $1\le j\le s$, let $D_{u_j}$ be the set of descendants of $u_j$ (which includes $u_j$ itself). Let \textit{$T_{X,v}$} be the subgraph induced by the set $\{v\}\cup(D_{u_1}\cup\cdots\cup D_{u_j})$. The graph $T_{X,v}$ forms a rooted tree with root $v$, which becomes a forest cluster (also denoted $T_{X,v}$) if we take the highlighted consecutive instances of $\sigma$ in $X$ whose vertices all lie in $T_{X,v}$.
\end{definition}
In the above definition, note that $s\ge 1$ since $v\in R(X)$. The claim that $T_{X,v}$ becomes a forest cluster follows from \cref{lem:instances-intersecting-R(X)}.

Informally, the maximal $\sigma$-nice subtree $T_\mathrm{max}(X)$ and the attached forest clusters $T_{X,v}$ for $v\in R(X)$ will form a ``decomposition'' of $X$ (unrelated to the \textit{decomposition} of a $\sigma$-nice forest that is defined in \cref{def:nice-decomposition}), where $R(X)$ is the set of roots of the attached clusters. These ideas are recorded in the following result:
\begin{lemma}\label{lem:forest-cluster-decomposition}
Let $\sigma$ be either $1423$ or $1324$, and let $X$ be a forest cluster with respect to $\sigma$. Let $R(X)=\{v_1,\dots,v_r\}$. Then, each vertex of $X$ lies in $T_\mathrm{max}(X)$ or in $T_{X,v}$ for some $v\in R(X)$, and for all $i\in\{1,\dots,r\}$, there exists exactly one vertex in both $T_\mathrm{max}(X)$ and $T_{X,v_i}$, namely $v_i$. For distinct $i,j\in\{1,\dots,r\}$, the forest clusters $T_{X,v_i}$ and $T_{X,v_j}$ do not share any vertices. Finally, each highlighted consecutive instance of $\sigma$ lies entirely in exactly one of $T_\mathrm{max}(X),T_{X,v_1},T_{X,v_2},\dots,T_{X,v_r}$.
\end{lemma}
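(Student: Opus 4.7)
The plan is to exploit the fact that $T_{\max}(X)$, being a $\sigma$-nice subtree of $X$ whose root is the root of $X$, is an \emph{ancestor-closed} subset of $X$: if $w \in T_{\max}(X)$ and $w'$ is an ancestor of $w$ in $X$, then $w' \in T_{\max}(X)$, since $T_{\max}(X)$ is an induced subgraph of $X$ forming a tree rooted at the root of $X$. This observation, together with \cref{lem:instances-intersecting-R(X)}, will drive essentially every case of the proof.

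For the covering claim in part 1, I would take an arbitrary vertex $w$ of $X$ with $w \notin T_{\max}(X)$; walking up the unique path from $w$ to the root of $X$ (which does lie in $T_{\max}(X)$), there is a first ancestor $v$ of $w$ that belongs to $T_{\max}(X)$, and the child $u$ of $v$ on this path lies outside $T_{\max}(X)$. Thus $v \in R(X)$ and $w \in D_u \subseteq T_{X,v}$. For the overlap claim $T_{\max}(X) \cap T_{X,v_i} = \{v_i\}$, the inclusion $\supseteq$ is immediate, and for the other direction, any element of $T_{X,v_i}$ other than $v_i$ is a descendant of some child $u_\ell$ of $v_i$ with $u_\ell \notin T_{\max}(X)$; if such a descendant lay in $T_{\max}(X)$, ancestor-closedness would put $u_\ell$ in $T_{\max}(X)$, a contradiction.

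For part 3, I would take distinct $v_i, v_j \in R(X)$ and split into cases. If neither of $v_i, v_j$ is an ancestor of the other in $X$, the subtrees of descendants rooted at them are automatically disjoint, hence so are $T_{X,v_i}$ and $T_{X,v_j}$. Otherwise, WLOG $v_i$ is a strict ancestor of $v_j$. Any shared vertex $w$ cannot equal $v_i$ (else $v_j$ would be an ancestor of $v_i$), and cannot equal $v_j$ (else $v_j \in T_{X,v_i}$ would force $v_j$ to be a descendant of some $u_\ell \notin T_{\max}(X)$, contradicting $v_j \in T_{\max}(X)$ via ancestor-closedness). So $w$ is a descendant of some $u_\ell$ (child of $v_i$) and also a descendant of $v_j$; by uniqueness of paths in the tree $X$, the child $u_\ell$ must be an ancestor of $v_j$, and ancestor-closedness then forces $u_\ell \in T_{\max}(X)$, a contradiction.

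The main obstacle, and the content of part 4, is showing each highlighted consecutive instance $w_1, w_2, w_3, w_4$ of $\sigma$ sits entirely in a single piece. If every $w_i \in T_{\max}(X)$, we are done. If $w_1 \in T_{\max}(X)$ but some $w_i \notin T_{\max}(X)$, I take the least such $i$; ancestor-closedness then forces $w_{i+1}, \dots, w_4 \notin T_{\max}(X)$ as well. Since $w_{i-1} \in R(X)$ and $w_i$ is a child of $w_{i-1}$ outside $T_{\max}(X)$, \cref{lem:instances-intersecting-R(X)} says the unique highlighted instance through the edge $w_{i-1} w_i$ starts at $w_{i-1}$, forcing $i = 2$; the whole instance then lies in $T_{X,w_1}$. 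If $w_1 \notin T_{\max}(X)$, part 1 places $w_1$ in some $T_{X,v_i}$ as a descendant of one of the children of $v_i$ outside $T_{\max}(X)$, and $w_2, w_3, w_4$ are descendants of $w_1$, hence of the same child; thus the full instance lies in $T_{X,v_i}$. Uniqueness of the containing piece then follows from parts 2 and 3. The genuinely delicate point is the straddling case, where \cref{lem:instances-intersecting-R(X)} is used in an essential way to force the break in the instance to occur exactly at the edge $w_1 w_2$ rather than further down.
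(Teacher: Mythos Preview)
Your proposal is correct and is exactly the intended argument: the paper's own proof consists of the single sentence ``The proof follows easily from \cref{lem:instances-intersecting-R(X)},'' and your write-up simply spells out those details, using ancestor-closedness of $T_{\max}(X)$ for the covering and disjointness claims and invoking \cref{lem:instances-intersecting-R(X)} at the key straddling step in part~4 to force $i=2$.
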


\begin{proof}
The proof follows easily from \cref{lem:instances-intersecting-R(X)}.
\end{proof}

We are finally ready to prove \cref{thm:1324-1423}. We will use the ``decomposition'' of a forest cluster to write down a recurrence for the forest cluster numbers.

\begin{proof}[Proof of \cref{thm:1324-1423}]
By \cref{thm:same-cluster-numbers}, it is equivalent to show that $1423$ and $1324$ have equal forest cluster numbers. Let $\sigma$ be either $1423$ or $1324$. For a positive integer $n$, let $L_R$ be a subset of $\{2,\dots,n\}$, and let $r=|L_R|$. Suppose $L_R=\{\ell_1,\dots,\ell_r\}$, where $\ell_1<\cdots<\ell_r$. Let $L_1,\dots,L_r$ be disjoint subsets of $\{2,\dots,n\}$ such that $L_i\subseteq\{\ell_i,\ell_i+1,\dots,n\}$ and $L_R\cap L_i=\{\ell_i\}$ for all $1\le i\le r$. For simplicity, let $\mathcal{L}_n$ be the set of all pairs $(L_R,(L_1,\dots,L_r))$ satisfying these conditions.

Now given integers $n,m\ge 1$, sets $L_R,L_1,\dots,L_r$ such that $(L_R,(L_1,\dots,L_r))\in\mathcal{L}_n$, and nonnegative integers $m_0,m_1,\dots,m_r$ that sum to $m$, let $N_\sigma(n,m,L_R,(L_1,\dots,L_r),(m_0,\dots,m_r))$ be the number of $m$-forest clusters $X$ on $[n]$ with respect to $\sigma$ such that the following hold:
\begin{itemize}
    \item The tree $T_\mathrm{max}(X)$ has label set $([n]\setminus (L_1\cup\cdots\cup L_r))\cup L_R$.
    \item $|R(X)|=r$ and the labels of the vertices in $R(X)$ form the set $L_R$.
    \item For all $1\le i\le r$, the label set of the forest cluster $T_{X,v_i}$ is $L_i$, where $v_i\in R(X)$ denotes the vertex with label $\ell_i$.
    \item $T_\mathrm{max}(X)$ contains exactly $m_0$ highlighted consecutive instances.
    \item For all $1\le i\le r$, the forest cluster $T_{X,v_i}$ contains exactly $m_i$ highlighted consecutive instances.
\end{itemize}
Then using \cref{lem:forest-cluster-decomposition} and the fact that the root vertex has the smallest label in any forest cluster with respect to $\sigma$, it is clear that
\[r_{n,m}=\sum_{(L_R,(L_1,\dots,L_r))\in\mathcal{L}_n}\sum_{\substack{m_0,\dots,m_r\in\mathbb{Z}_{\ge 0}\\m_0+\cdots+m_r=m}}N_\sigma(n,m,L_R,(L_1,\dots,L_r),(m_0,\dots,m_r)).\]
We claim that
\[N_\sigma(n,m,L_R,(L_1,\dots,L_r),(m_0,\dots,m_r))=B_\sigma(n-(|L_1|+\cdots+|L_r|)+|L_R|,m_0)\prod_{i=1}^rr_{|L_i|,m_i}.\]
To see this, if $X$ is counted by $N_\sigma(n,m,L_R,(L_1,\dots,L_r),(m_0,\dots,m_r))$, then $T_\mathrm{max}(X)$ is a $\sigma$-nice tree with no childless vertices of depth $2$ and contains exactly $m_0$ highlighted consecutive instances of $\sigma$. Note also that every consecutive instance of $\sigma$ in $T_\mathrm{max}(X)$ is highlighted. In addition, for all $1\le i\le r$, the forest cluster $T_{X,v_i}$ consists of $|L_i|$ vertices and contains exactly $m_i$ highlighted consecutive instances (where as usual $v_i$ denotes the vertex with label $\ell_i$).

Conversely, suppose we are given a $\sigma$-nice tree $T$ with no childless depth-$2$ vertices with label set $([n]\setminus(L_1\cup\cdots\cup L_r))\cup L_R$ and containing exactly $m_0$ consecutive instances of $\sigma$, and for all $1\le i\le r$ we are given a forest cluster $T_i$ with label set $L_i$ and containing $m_i$ highlighted consecutive instances of $\sigma$. If for all $i$ we attach the cluster $T_i$ to the vertex $v_i$ labeled $\ell_i$ in $T$ and highlight all consecutive instances contained entirely in $T$, then we obtain a forest cluster $X$ with $n$ vertices and $m$ highlighted consecutive instances of $\sigma$ such that $T_\mathrm{max}(X)=T$ and $T_{X,v_i}=T_i$ for all $i$. The claimed equality follows.

Thus we find the recurrence
\[r_{n,m}=\sum_{(L_R,(L_1,\dots,L_r))\in\mathcal{L}_n}\sum_{\substack{m_0,\dots,m_r\in\mathbb{Z}_{\ge 0}\\m_0+\cdots+m_r=m}}B_\sigma(n-(|L_1|+\cdots+|L_r|)+|L_R|,m_0)\prod_{i=1}^rr_{|L_i|,m_i}.\]
The result then follows by inducting on $n$, with base case $r_{1,m}=0$, and using \cref{prop:B1423=B1324}.
\end{proof}

We arrive at the following surprising result:

\begin{corollary}\label{cor:cfw-and-cw}
In general, c-forest-Wilf equivalence does not imply c-Wilf equivalence.
\end{corollary}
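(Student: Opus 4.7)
The plan is to observe that \cref{cor:cfw-and-cw} is an immediate consequence of \cref{thm:1324-1423} together with the known failure of c-Wilf equivalence for the same pair. Specifically, \cref{thm:1324-1423} produces an explicit pair of patterns, namely $1324$ and $1423$, that are (strong-)c-forest-Wilf equivalent. To witness the nonimplication, I would then cite the fact already noted in the introduction (and established in the consecutive pattern avoidance literature for permutations, e.g.\ via the cluster method of Elizalde and Noy) that $1324$ and $1423$ are not c-Wilf equivalent as permutation patterns: there exists some $n$ for which the number of permutations in $\mathcal{S}_n$ avoiding $1324$ as a consecutive pattern differs from the number avoiding $1423$.

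Putting these two ingredients together, the pair $(1324,1423)$ serves as the required counterexample: they are c-forest-Wilf equivalent by \cref{thm:1324-1423} but fail to be c-Wilf equivalent by the above. Hence c-forest-Wilf equivalence of two sets of patterns does not in general force their c-Wilf equivalence, which is exactly the statement of the corollary.

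There is essentially no technical obstacle here, since all of the work has already been carried out in proving \cref{thm:1324-1423}; the only remaining ingredient is the external (and well-documented) computation showing that $1324$ and $1423$ already fail c-Wilf equivalence at the level of permutations. If anything, the only point to be careful about is to make sure the corollary is phrased at the level of single patterns rather than sets, which is consistent with the formulation of \cref{thm:1324-1423}.
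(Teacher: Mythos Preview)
Your proposal is correct and matches the paper's own approach exactly: the corollary is placed immediately after \cref{thm:1324-1423} with no separate proof, relying on that theorem together with the fact (noted in the introduction) that $1324$ and $1423$ are not c-Wilf equivalent as permutation patterns.
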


As mentioned in the introduction, it is known that the patterns  $1423$ and $1324$ are not c-Wilf equivalent \cite[Table~3]{elizaldenoy1}.

\subsubsection{Enumerating $\sigma$-extranice trees for $\sigma=1234,1423,1324$}\label{subsubsec:sigma-extranice-enumeration}
Given our work, enumerating $\sigma$-extranice forests is not as difficult as analogous problems in pattern avoidance in rooted forests. We first prove \cref{thm:extranice-equal}, part of which states that the numbers of $\sigma$-extranice trees are equal for $\sigma=1234,1423,1324$.

\begin{definition}
Given a tree $T$ and an odd-depth vertex $v$ of $T$, we define a map $g_v$ that permutes the labels of the subtree rooted at $v$, while preserving the underlying structure of $T$.
\begin{enumerate}
    \item Let the label set of the subtree rooted at $v$ be $L$, and let $j\in L$ be the label of $v$. Relabel every child $w$ of $v$ so that if $w$ originally has the $i$th smallest label in $L\setminus \{j\}$, then $w$ is relabeled to have the $i$th largest label in $L \setminus \{j\}$.
    \item Relabel each subtree $X$ rooted at a child of a child of $v$ as follows. Suppose $X$ originally has label set $D$. Then relabel $X$ using the label set $E$ while keeping all labels of vertices of $X$ in the same relative order, where $E$ is defined by replacing each element $x$ of $D$ by the element $y$ of $L \setminus \{j\}$ such that if $x$ is the $i$th smallest element of $L \setminus \{j\}$, then $y$ is the $i$th largest element of $L \setminus \{j\}$. Note that this relabeling is analogous to the $\alpha_E$ operation on twigs.
\end{enumerate}
The map $G$ relabels a tree $T$ by applying $g_v$ for all odd-depth vertices $v$; for $i\ge 1$, in Step $i$ of the construction of $G(T)$ from $T$, the map $g_v$ is applied to the tree for each vertex $v$ of depth $2i-1$. The steps are performed numerical order, i.e., first Step $1$, then Step $2$, and so on.
\end{definition}

\begin{lemma}\label{lem:G-change}
If $T$ is $1423$-nice, then $G(T)$ is $1234$-nice. Similarly, if $T$ is $1234$-nice, then $G(T)$ is $1423$-nice.
\end{lemma}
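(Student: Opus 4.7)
The plan is to characterize the $\sigma$-nice property in terms of ``min/max in subtree'' conditions on labels, and then to show that the successive applications of $g_v$ comprising $G$ transform these conditions appropriately. First I would establish the following two characterizations: a tree is $1423$-nice if and only if every odd-depth vertex has at least one child, every odd-depth vertex has the smallest label in the subtree rooted at it, and every even-depth vertex has the largest label in the subtree rooted at it; a tree is $1234$-nice if and only if every odd-depth vertex has at least one child and every vertex has the smallest label in the subtree rooted at it. The forward direction of the $1423$-nice characterization follows by strong induction on depth using the consecutive-instance condition: the $1423$ pattern on a four-path $(a,b,c,w)$ ending at an even-depth $w\ge 4$ forces $a<c<w<b$, and iterating this yields the claimed subtree extrema. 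The $1234$-nice characterization is analogous but easier (it simply says $1234$-nice trees are increasing trees whose odd-depth vertices all have children).

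Next I would analyze a single application of $g_v$ at an odd-depth vertex $v$. Writing $\phi_v$ for the order-reversing bijection on $L\setminus\{j\}$, the key technical claim is: if immediately before $g_v$ the label $j$ of $v$ is the minimum of $L$ and each child $w$ of $v$ has the maximum (resp.\ minimum) label of the label set $D_w$ of the subtree rooted at $w$, then after $g_v$ the label of $v$ is still the minimum of $L$, each child $w$ becomes the minimum (resp.\ maximum) of its new label set $\phi_v(D_w)$, and the min/max status of every vertex strictly below a child of $v$ within its own subtree is preserved. Step~1 handles the children directly, since $\phi_v$ reverses order on $L\setminus\{j\}$ and each $D_w\subseteq L\setminus\{j\}$, so the maximum of $D_w$ maps to the minimum of $\phi_v(D_w)$ and vice versa. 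Step~2, which relabels each grandchild subtree $X$ using $\phi_v(D)$ while preserving the relative order within $X$, preserves the relative order within every sub-subtree of $X$ as well, so the min/max status of any vertex of $X$ within its own subtree is unchanged.

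Finally, I would stitch the single-$g_v$ analyses together by processing odd-depth vertices in increasing order of depth. The crucial observation is that after $g_v$ is applied, the min/max hypotheses needed to invoke the technical claim for the next, deeper $g_{v'}$ still hold: a deeper $g_{v'}$ can only permute labels strictly inside subtrees rooted at vertices of depth greater than $d(v)$, and therefore cannot change the label of $v$ or the label set of the subtree rooted at $v$. Once every odd-depth vertex has been processed, each even-depth vertex has had its min/max status in its own subtree flipped exactly once, namely during the $g_v$ applied at its odd-depth parent. Consequently, if $T$ is $1423$-nice then $G(T)$ has ``min at every subtree'' (the odd-depth vertices stay min, the even-depth ones flip from max to min), so $G(T)$ is $1234$-nice by the characterization; conversely, if $T$ is $1234$-nice then $G(T)$ has odd-min and even-max, so $G(T)$ is $1423$-nice. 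The main obstacle will be the bookkeeping in the single-$g_v$ claim: carefully verifying that Step~2's simultaneous action on a child's label and on the subtrees rooted at its children correctly propagates the min/max data required for the induction, and that no hidden dependence on the specific labels (as opposed to their order-structure) spoils the argument.
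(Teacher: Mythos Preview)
Your proposal is correct and follows essentially the same inductive approach as the paper: the paper's claim that ``after Step $i$, every depth-$(2i+2)$ vertex is a $1234$-endpoint while every subtree rooted at depth $2i+1$ remains $1423$-nice'' is precisely your min/max-flipping statement once the characterizations are in hand, and the paper uses those characterizations implicitly (e.g., ``every child $w$ of $v$ has a label greater than the label of each of the strict descendants of $w$''). Your version is slightly more modular in isolating the min/max characterization as a separate preliminary step, but the underlying argument is the same.
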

\begin{proof}
Suppose $T$ is $1423$-nice, and suppose we apply $G$ to $T$. We claim that immediately after Step $i$, every vertex of depth $2i+2$ is the endpoint of a consecutive instance of $1234$, while every subtree rooted at a vertex of depth $2i+1$ remains $1423$-nice. We will prove this by induction on $i$. For the base case $i=1$, let $v$ be the root of $T$. Then since $T$ is $1423$-nice, every child $w$ of $v$ has a label greater than the label of each of the strict descendants of $w$. Thus when applying $g_v$, the vertex $w$ ends up with a label less than the label of each of its strict descendants, but still greater than the label of $v$, so every depth-$4$ vertex is now the endpoint of a consecutive instance of $1234$. Meanwhile, each subtree rooted at a child of $w$ has the labels of its vertices in the same relative order, so each such subtree remains $1423$-nice. This finishes the base case. The inductive step is addressed similarly, finishing the proof of the claim.

Note that for all $i\ge 1$, immediately after Step $i$, each vertex $v$ with depth $2i-1$ still has a smaller label than each of its children, and if $i\ge 2$, this implies each child of $v$ is still an endpoint of a consecutive instance of $1234$. These statements continue to hold throughout the remainder of the construction of $G(T)$. Combining this with the claim, we see that $G(T)$ must be $1234$-nice.

The proof that $G(T)$ is $1423$-nice if $T$ is $1234$-nice is similar.
\end{proof}
\begin{lemma}\label{lem:G-involution}
For any tree $T$, we have $G(G(T))=T$.
\end{lemma}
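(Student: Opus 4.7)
The plan is to prove $G(G(T)) = T$ by induction on the depth of $T$, supported by two lemmas about the operation $g_v$: (a) it is an involution on the subtree on which it acts, and (b) it commutes with every order-preserving relabeling of labels.

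For (a), I will verify by direct computation. Write $L$ for the label set of the subtree rooted at $v$, write $j$ for the label of $v$, and let $\pi_L$ denote the order-reversing involution on $L \setminus \{j\}$. Step 1 of $g_v$ replaces the label $c_w$ of each child $w$ by $\pi_L(c_w)$, so a second application restores each such label. Step 2 of $g_v$ relabels each subtree $X$ rooted at a grandchild by sending its label set $D$ to $\pi_L(D)$ while preserving relative order within $X$; the key point is that each vertex of $X$ keeps its rank throughout, so applying Step 2 twice returns every vertex to its original label. For (b), the identity $\phi \circ \pi_L = \pi_{\phi(L)} \circ \phi$ on $L \setminus \{j\}$ holds whenever $\phi$ is order-preserving, and tracing through Steps 1 and 2 of $g_v$ yields $g_v \circ \phi = \phi \circ g_v$. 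The property then propagates to $G$.

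I argue by induction on the depth $D$ of $T$. The cases $D \le 2$ are immediate, since $G$ is trivial or equals $g_v$ for the root of $T$. For $D \ge 3$, let $v$ be the root, and observe the following recursive description: Step 1 of $G$ is exactly $g_v$, while Steps $2, 3, \dots$ of $G$ applied to $g_v(T)$ coincide with applying all of $G$ to each subtree rooted at a depth-3 vertex, because Step $k$ of $G$ on $T$ restricts to Step $k-1$ of $G$ on each such subtree. Hence $G(T)$ is obtained from $T$ by first applying $g_v$ and then replacing each subtree $T^*_{w'}$ at a depth-3 vertex $w'$ by $G(T^*_{w'})$. Writing $T_{w'}$ for the original subtree and $\psi_{w'}$ for the order-preserving bijection from $D_{w'}$ to $\pi_L(D_{w'})$ introduced in Step 2 of $g_v$, we have $T^*_{w'} = \psi_{w'}(T_{w'})$.

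Now compute $G(G(T))$. The first step of the second application of $G$ is again $g_v$. Its child-flip part restores the children's labels to their originals in $T$. Its subtree-relabel part applies to the subtree at each depth-3 vertex $w'$ the order-preserving bijection $\psi_{w'}^{-1}$. By the commutativity lemma, the subtree at $w'$ becomes $\psi_{w'}^{-1}(G(\psi_{w'}(T_{w'}))) = G(T_{w'})$. Steps $2, 3, \dots$ of the second $G$ then apply $G$ to each such subtree, yielding $G(G(T_{w'})) = T_{w'}$ by the inductive hypothesis (each such subtree has depth at most $D - 2$). Hence $G(G(T)) = T$, completing the induction. The main obstacle will be carefully unwinding the global definition of $G$ into this recursive form and keeping track of which label set each flip is computed from.
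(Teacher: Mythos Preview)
Your proof is correct, but the paper takes a shorter and more conceptual route. Rather than inducting on depth, the paper observes that $G$ is the composition of the maps $g_v$ over all odd-depth vertices $v$, and then proves directly that any two of these commute: if neither of $v,w$ is an ancestor of the other the subtrees are disjoint, and if $v$ is a strict ancestor of $w$ then $g_v$ restricted to the subtree at $w$ is exactly an order-preserving relabeling of that subtree, while $g_w$ depends only on the relative order of the labels there. Since a product of pairwise commuting involutions is again an involution, $G\circ G=\mathrm{id}$ follows immediately.

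Your lemma~(b) is essentially the same observation the paper uses to prove commutativity in the ancestor case, so the core idea is shared; the difference is in how it is packaged. The paper's packaging avoids the recursive unwinding of $G$ and the careful tracking of which label set each $\psi_{w'}$ uses, which is where most of the work in your argument lies. On the other hand, your approach makes the role of the ``relative-order invariance'' of $G$ completely explicit, which is a useful fact in its own right.
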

\begin{proof}
First, it is easy to see that $g_v$ itself is an involution. Now we claim that $g_v, g_w$ commute for any two odd-depth vertices $v, w$. If neither one of $v, w$ is an ancestor of the other, or if $v=w$, then this is trivially true. Otherwise, suppose without loss of generality that $v$ is a strict ancestor of $w$. Let $X$ be the subtree rooted at $w$. Note that each vertex of $T$ that does not lie in $X$ has the same label in $g_v(g_w(T))$ and in $g_w(g_v(T))$, so we only need to consider the vertices of $X$. Applying $g_v$ does not change the relative order of the labels of the vertices of $X$, and changes the label set of $X$ based on its original label set. The map $g_w$ does not change the label set of $X$, and changes the relative order of the labels of the vertices of $X$ in a manner independent of the specific label set used, and only depending on the original relative order of the labels of the vertices of $X$. Thus $g_vg_w=g_wg_v$, proving the claim. Since $G$ is a composition of commuting involutions, it is also an involution.
\end{proof}

\begin{theorem}\label{thm:extranice-equal}
Fix $n$. The numbers of $\sigma$-nice forests (resp.\ trees) on $[n]$ are equal for $\sigma=1234, 1423, 1324$. Furthermore, the numbers of $\sigma$-extranice forests (resp.\ trees) on $[n]$ are also equal for $\sigma=1234, 1423, 1324$.
\end{theorem}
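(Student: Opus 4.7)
The plan is to assemble the three-way equalities from pieces already developed: \cref{prop:A1423=A1324}, \cref{prop:twig} (through the involution $\gamma$ of \cref{prop:gamma-involution}), and the bijection $G$ of \cref{lem:G-change,lem:G-involution}. The forest statements will follow from the tree statements via the tree-to-forest recurrence \cref{eq:forest-from-tree}, since being $\sigma$-nice (resp.\ $\sigma$-extranice) is a tree-local condition: a forest satisfies the condition if and only if each of its constituent trees does.

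For the tree case of $\sigma$-nice, summing \cref{prop:A1423=A1324} over $m$ would give that the numbers of $1423$-nice trees and of $1324$-nice trees on $[n]$ agree. \cref{lem:G-change,lem:G-involution} together establish that $G$ restricts to a bijection between $1423$-nice trees on $[n]$ and $1234$-nice trees on $[n]$, completing the three-way equality. Applying \cref{eq:forest-from-tree} transfers this equality to forests.

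For the tree case of $\sigma$-extranice, the key reduction is that the parent vertices of the twigs in the decomposition of a $\sigma$-nice tree are exactly its odd-depth vertices, so a $\sigma$-nice tree is $\sigma$-extranice if and only if every twig in its decomposition has exactly one child vertex. Let $T_\sigma^{\mathrm{extra}}(W)$ denote the number of $\sigma$-extranice trees constructed from a proper twig collection $W$; then $T_\sigma^{\mathrm{extra}}(W)=T_\sigma(W)$ when every twig of $W$ has exactly one child, and $T_\sigma^{\mathrm{extra}}(W)=0$ otherwise. Because neither $\alpha_E$ nor $\operatorname{rel}_{C\setminus D}$ alters the structure of any twig, the involution $\gamma$ restricts to an involution on the set of one-child proper twig collections with label set $[n]$. \cref{prop:twig} then yields $T_{1423}^{\mathrm{extra}}(W)=T_{1324}^{\mathrm{extra}}(\gamma(W))$, and summing over all one-child proper twig collections with label set $[n]$ gives the equality of $1423$-extranice and $1324$-extranice tree counts. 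For the $1234$-extranice case, $G$ only permutes labels and so preserves the underlying unlabeled tree structure, hence preserves the structural extranice condition; combined with \cref{lem:G-change}, this gives a bijection between $1423$-extranice and $1234$-extranice trees on $[n]$. The forest case again follows from \cref{eq:forest-from-tree}.

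The only point requiring real care is verifying that $\gamma$ preserves the number of children of each twig, so that extranice twig collections are mapped to extranice twig collections; this is the main bookkeeping step but is essentially immediate from the recursive definition of $\gamma$.
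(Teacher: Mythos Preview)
Your proposal is correct and follows essentially the same route as the paper: the $1423$/$1324$ equalities come from \cref{prop:A1423=A1324} (summed over $m$) and from \cref{prop:twig} together with the observation that $\gamma$ preserves the number of children of each twig (the paper cites \cref{lem:gamma-child-vertex-labels-permutation} for this), while the $1423$/$1234$ equalities come from $G$ being a structure-preserving bijection via \cref{lem:G-change,lem:G-involution}. The only cosmetic difference is that you pass from trees to forests via the recurrence \cref{eq:forest-from-tree}, whereas the paper handles forests directly (by the obvious modification using $F_\sigma(W)$ in place of $T_\sigma(W)$, and by noting that $G$, being structure-preserving, extends tree-by-tree to forests); the paper's own remark before the proof acknowledges that your reduction is valid.
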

\begin{remark}
Note that the statement of \cref{thm:extranice-equal} for $\sigma$-nice (resp.\ $\sigma$-extranice) forests is automatically equivalent to the statement for $\sigma$-nice (resp.\ $\sigma$-extranice) trees, though we will not use this in the proof.
\end{remark}
\begin{proof}
We first prove all four statements for $\sigma=1423$ and $\sigma=1324$. The statement for $\sigma$-nice trees follows directly from \cref{prop:A1423=A1324}, and the statement for $\sigma$-nice forests follows from a straightforward modification of the proof of \cref{prop:A1423=A1324}. Note that a $\sigma$-nice forest (resp.\ tree) is $\sigma$-extranice if and only if its decomposition is a proper twig collection in which each twig has exactly one child vertex. So, the number of $\sigma$-extranice forests (resp.\ trees) on $[n]$ equals the sum of $F_\sigma(W)$ (resp.\ $T_\sigma(W)$) over all proper twig collections $W$ with label set $[n]$ in which each twig has exactly one child vertex. Thus, we are done by \cref{lem:gamma-child-vertex-labels-permutation,prop:gamma-involution,prop:twig}.

By \cref{lem:G-change} and \cref{lem:G-involution}, $G$ is a structure-preserving bijection between $1423$-nice trees and $1234$-nice trees. So, the numbers of $\sigma$-nice forests (resp.\ trees) on $[n]$ are equal for $\sigma=1234, 1423$. Whether a $\sigma$-nice forest (resp.\ tree) is $\sigma$-extranice depends only on the structure of the forest (resp.\ tree), and therefore the numbers of $\sigma$-extranice forests (resp.\ trees) on $[n]$ are equal for $\sigma=1234, 1423$. Thus, the theorem is proved.
\end{proof}

\begin{proposition}
The number of $\sigma$-extranice trees on $[2n]$ equals
\[\frac{(-1)^{n-1}2^{n+1}(2^{2n}-1)B_{2n}}{2n},\]
where $B_{2n}$ is the $(2n)$th Bernoulli number, for $\sigma=1234,1423,1324$.
\end{proposition}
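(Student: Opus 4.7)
The strategy is to reduce to the $\sigma=1234$ case via Theorem~\ref{thm:extranice-equal}, identify $1234$-extranice trees with a concrete class of increasing trees, translate the recursive structure into an ODE for the exponential generating function, and extract coefficients. First I would show that a rooted labeled tree on $[2n]$ is $1234$-extranice if and only if (i) every odd-depth vertex has exactly one child, (ii) every even-depth vertex has any nonnegative number of children, and (iii) labels strictly increase along every root-to-leaf path. The nontrivial direction is (iii): at each depth-$2k$ vertex $v_{2k}$ with $k\ge 2$, the consecutive $1234$-condition on the path $v_{2k-3}v_{2k-2}v_{2k-1}v_{2k}$ forces $v_{2k-1}>v_{2k-2}$, which combined with the nice inequalities $v_{2j}>v_{2j-1}$ covers every parent-child jump. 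The reverse direction is immediate, since any four consecutive ancestors in an increasing tree automatically form a $1234$-pattern.

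Let $a_{2n}$ denote the desired count and set $y(x)=\sum_{n\ge 1}a_{2n}\,x^{2n}/(2n)!$. Since the root of an increasing tree is labeled $1$ and the unique depth-$2$ vertex is the root of an increasing subtree of size $2n-1$, it must be labeled $2$. Stripping these two vertices and order-preservingly relabeling what remains yields an unordered forest of smaller $1234$-extranice trees on $[2n-2]$; this process is reversible. Because the EGF for unordered forests of extranice trees is $e^{y(x)}$ and extranice trees have even size, the bijection translates to the initial value problem
\[y''(x)=e^{y(x)},\qquad y(0)=y'(0)=0.\]
Multiplying by $y'$ and integrating gives $(y')^2=2(e^y-1)$. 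I would then verify directly that $y(x)=-2\log\cos(x/\sqrt{2})$ satisfies these initial conditions and both forms of the ODE, using $y'=\sqrt{2}\tan(x/\sqrt{2})$ and $e^y=\sec^2(x/\sqrt{2})$; analytic uniqueness forces this closed form.

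To finish, I would extract coefficients using the classical tangent expansion
\[\tan(u)=\sum_{n\ge 1}\frac{(-1)^{n-1}2^{2n}(2^{2n}-1)B_{2n}}{(2n)!}u^{2n-1},\]
together with $-\log\cos(u)=\int_0^u\tan(s)\,ds$ and the substitution $u=x/\sqrt{2}$. Matching the resulting series with $y(x)=\sum a_{2n}x^{2n}/(2n)!$ yields the stated formula. The main obstacle is the combinatorial equivalence in the first step: the extranice definition only imposes the $1234$-pattern condition at depths $\ge 4$, so one must check carefully that these sparsely placed constraints, together with the odd-depth structural condition, are enough to propagate strict increase across every parent-child edge. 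Once this identification is made, the bijective derivation of $y''=e^y$ and the subsequent ODE analysis are routine.
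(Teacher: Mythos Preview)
Your proposal is correct and follows essentially the same approach as the paper: invoke \cref{thm:extranice-equal}, derive the recurrence $T(2n)=F(2n-2)$ (equivalently $y''=e^{y}$), solve to get $y'=\sqrt{2}\tan(x/\sqrt{2})$, and extract coefficients via the tangent series. The only cosmetic difference is that you reduce to $\sigma=1234$ and read off the labels $1,2$ at the top via your increasing-tree characterization, whereas the paper reduces to $\sigma=1423$ and reads off labels $1,n$ directly; both routes yield the identical recurrence and ODE.
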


\begin{proof}
By \cref{thm:extranice-equal}, it is enough to show the formula holds for $\sigma=1423$. The root of a $1423$-extranice tree on $[n]$ must be labeled $1$, and its unique child must be labeled $n$; below this vertex, we have a $1423$-extranice forest on $n-2$ vertices. So, letting $F(n)$ (resp.\ $T(n)$) be the number of $1423$-extranice forests (resp.\ trees) on $[n]$, we have $T(n)=F(n-2)$. If we use the exponential generating functions $f=\sum_{n=0}^{\infty} F(n)x^n/n!$ and $t=\sum_{n=0}^{\infty} T(n)x^n/n!$, then as in \cref{sec:treeforest}, by \cref{eq:forest-from-tree}, we have $t=\log(f)$. In addition, we have the new relation $f=t''$. So, $t=\log(t'')$, giving $t' \cdot t'' = t'''$. Using $T(0)=1, T(1)=0, T(2)=1$, we can compute \[t=1-2\log\cos\left(\frac{x}{\sqrt{2}}\right).\]

A more recognizable form is $t'=\sqrt{2}\tan(x/\sqrt{2})$. Using the formula for the tangent Maclaurin series and integrating, we have 
\[T(2n)=\frac{(-1)^{n-1}2^{n+1}(2^{2n}-1)B_{2n}}{2n},\]
so we are done. This formula was found using computer calculations.
\end{proof}

\section{Future work}
We have more conjectured forest-Wilf equivalences.

\begin{conjecture}
The following three forest-Wilf equivalences hold, where we use $\sim$ to denote forest-Wilf equivalence:

\begin{itemize}
            \item $\{123,2413\}\sim\{132,2314\}$.
            \item $\{123,3142\}\sim\{132,3124\}$.
            \item $\{213,4123\}\sim\{213,4132\}$.
        \end{itemize}
\end{conjecture}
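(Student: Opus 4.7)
My plan is to address the three equivalences by adapting the bijective machinery of \cref{sec:forest-wilf-equivalences,sec:forest-shape-wilf-equivalences}. For all three conjectures, the natural first candidate is the shuffle/antishuffle bijection $\alpha, \beta$ of \cref{subsec:west-bijection-proof}, which already gives a structure-preserving bijection $F_n(132) \to F_n(123)$, and to try to show that $\alpha$ and $\beta$ restrict further to bijections between the refined avoidance classes.

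For $\{123, 2413\} \sim \{132, 2314\}$ and $\{123, 3142\} \sim \{132, 3124\}$, each pair is obtained by applying the same swap to the short and long patterns simultaneously: swapping the positions of the two largest values in conjecture~1 ($2 \leftrightarrow 3$ in the length-$3$ pattern and $3 \leftrightarrow 4$ in the length-$4$ pattern) and swapping the values in the two rightmost positions in conjecture~2. For conjecture~1 the plan is to prove that if $F \in F_n(132, 2314)$ then $\alpha(F) \in F_n(123, 2413)$, by contradiction: assume $\alpha(F)$ contains a $2413$-instance $v_1, v_2, v_3, v_4$, identify which of these four vertices are special (since the labels satisfy $\ell_3 < \ell_1 < \ell_4 < \ell_2$, the vertices $v_2$ and $v_4$ are necessarily special), and invoke \cref{lem:shuffle,lem:antishuffle}, which say that non-special vertices retain their labels and that the relative order of labels below any special vertex is preserved. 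Pulling back the shuffles at $v_2$ and $v_4$, we aim to produce in $F$ either a $132$-instance (contradicting $F \in F_n(132)$) or a $2314$-instance (contradicting $F \in F_n(132, 2314)$); a symmetric argument using $\beta$ gives the reverse inclusion. Conjecture~2 should yield to an analogous case analysis.

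For $\{213, 4123\} \sim \{213, 4132\}$, neither $4123$ nor $4132$ places its two largest values in its last two positions, so \cref{thm:westbijection,thm:fswe} do not apply directly. Instead, I would exploit the rigid structure of $213$-avoiding trees described in \cref{subsubsec:213-recurrence}: in such a tree with root $i$, the labels $\{i, \dots, n\}$ form a contiguous subtree and $\{1, \dots, i-1\}$ attach as a $213$-avoiding forest with $n-i+1$ distinguishable pots. Mimicking the derivation there with an extra parameter tracking potential $4123$- or $4132$-instances, I expect parallel recurrences for $T_n(213, 4123)$ and $T_n(213, 4132)$; the swap between positions $3$ and $4$ of the length-$4$ pattern should mirror an internal symmetry in how the attached subforests can be placed among the pots. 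The main obstacle throughout all three conjectures is that instances of the length-$4$ patterns can involve labels that move under $\alpha$ in subtle ways or that straddle the contiguous subtree and the attached forest, so a delicate case analysis will be required; if the direct bijective route fails, the fallback is a joint recurrence in the style of \cref{sec:recurrences}, verifying that both sides satisfy the same recurrence and initial conditions.
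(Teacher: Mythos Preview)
The statement you are attempting to prove is a \emph{Conjecture} in the paper's Future Work section; the paper supplies no proof, so there is nothing to compare your argument against. What you have written is a research plan for attacking an open problem, not a proof, and it should be understood as such.

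On the substance of your plan: your hope that the bijection $\alpha,\beta$ of \cref{subsec:west-bijection-proof} restricts to the refined classes is natural, but note that \cref{thm:fswe} requires the two largest values of each pattern to occupy the \emph{last two positions}. This holds for $123\leftrightarrow 132$ but fails for $2413\leftrightarrow 2314$ and $3142\leftrightarrow 3124$, so those theorems do not apply and you would genuinely be proving something new. Your sketch for the first equivalence (``pull back the shuffles at $v_2$ and $v_4$ to produce a $132$- or $2314$-instance in $F$'') omits the hard part: $v_1$ and $v_3$ may themselves be special, in which case their labels also move under $\alpha$, and the relative order of the four labels in $F$ need not match any obvious pattern. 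The analogous difficulty already appears in the proof of \cref{thm:213-equivalences}, where a careful minimality argument on $v_1$ was needed; here the length-$4$ patterns make the case analysis substantially more intricate, and it is not at all clear it closes. For the third equivalence your recurrence idea via \cref{subsubsec:213-recurrence} is reasonable, but ``an internal symmetry in how the attached subforests can be placed among the pots'' is a hope, not an argument: the patterns $4123$ and $4132$ constrain the attached small-label forest differently once it has depth $\ge 3$, and you have not indicated what statistic the extra parameter should track or why the two recurrences would coincide.
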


This conjecture has recently been proven in \cite{ren}.

We also have the following conjectures for the asymptotic behavior of these counts.

\begin{conjecture}
For any set of patterns $S$, there exists a constant $C_S\ge 0$ such that
\[ \lim_{n \rightarrow \infty} \frac{f_n(S)^{\frac{1}{n}}}{n}=\lim_{n \rightarrow \infty}\frac{t_n(S)^{\frac{1}{n}}}{n} = C_S.\]
Furthermore, if $C_S>0$, then each of the sequences $(f_{n+1}(S)/f_n(S)-f_n(S)/f_{n-1}(S))_{n \ge 1}$ and $(t_{n+1}(S)/t_n(S)-t_n(S)/t_{n-1}(S))_{n \ge 2}$ is monotonic for sufficiently large $n$ and converges to $eC_S$.
\end{conjecture}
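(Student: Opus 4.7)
The plan is to reformulate the conjecture in terms of the normalized sequences $b_n = f_n(S)/n!$ and $c_n = t_n(S)/n!$ and their exponential generating functions $B(x) = \sum_{n\ge 0} b_n x^n$ and $C(x) = \sum_{n\ge 0} c_n x^n$, then establish the limit on the forest side via Fekete's lemma, transfer it to the tree side via the exponential formula $B(x) = \exp(C(x))$ (which holds because every forest is an unordered set of trees), and finally attack the refined ratio-convergence claim via log-convexity. Writing $r_n = b_{n+1}/b_n$, a brief calculation shows $f_{n+1}(S)/f_n(S) - f_n(S)/f_{n-1}(S) = (n+1)r_n - nr_{n-1} = r_n + n(r_n - r_{n-1})$, so the conjecture is equivalent to $b_n^{1/n}, c_n^{1/n} \to L := eC_S$ together with monotonic convergence $r_n \to L$ fast enough that $n(r_n - r_{n-1}) \to 0$.

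For the forest side, I would apply Fekete's lemma to the superadditive sequence $\log b_n$. Given $S$-avoiding forests on disjoint label sets of sizes $m$ and $n$, their disjoint union is $S$-avoiding, so after summing over the $\binom{m+n}{m}$ label-splittings one obtains $f_{m+n}(S) \ge \binom{m+n}{m} f_m(S) f_n(S)$, i.e., $b_{m+n} \ge b_m b_n$. Combined with the trivial upper bound $f_n(S) \le (n+1)^{n-1}$ (the total count of labeled rooted forests on $[n]$), which by Stirling gives $b_n = O(e^n/n^{3/2})$, Fekete produces a finite limit $\lim b_n^{1/n} = L \in [0,e]$, and we set $C_S = L/e \in [0,1]$.

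To transfer to trees, I would compare radii of convergence. The inclusion $T_n(S) \subseteq F_n(S)$ gives $c_n \le b_n$ and hence $\limsup c_n^{1/n} \le L$. Conversely, for any $\gamma > \limsup c_n^{1/n}$ the series $C(x)$ converges absolutely on $\{|x| < \gamma^{-1}\}$, so $B = e^{C}$ is analytic there, forcing $\limsup b_n^{1/n} \le \gamma$; sending $\gamma \downarrow \limsup c_n^{1/n}$ yields equality. Upgrading this $\limsup$ to a genuine limit requires a lower bound on $c_n$: I would derive it from log-convexity of $(c_n)_{n\ge 1}$, which via the standard geometric-mean argument makes $c_n^{1/n}$ non-decreasing and hence convergent to $\limsup c_n^{1/n} = L$. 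Similarly, log-convexity of $(b_n)_{n\ge 1}$ improves the Fekete limit to ratio-convergence $r_n \uparrow L$.

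The main obstacle is the log-convexity inequalities $b_{n-1}b_{n+1} \ge b_n^2$ and $c_{n-1} c_{n+1} \ge c_n^2$, and the still more delicate eventual monotonicity of $(n+1)r_n - nr_{n-1}$. Combinatorially, log-convexity of $b_n$ amounts to constructing an injection from pairs of $S$-avoiding forests of total label-size $2n$ (one forest of each size $n$) into pairs of sizes $(n-1,n+1)$; the leaf-slide and root-insertion moves used for unrestricted rooted forests furnish the natural template, but arranging such moves to preserve $S$-avoidance for arbitrary $S$ looks difficult. Assuming log-convexity, $r_n \uparrow L$ together with a quantitative estimate $L - r_n = O(n^{-1-\epsilon})$ (whose proof would require finer analysis of the singular behavior of $B$ near $x = L^{-1}$, in the spirit of transfer-theorem asymptotics) yields $n(r_n - r_{n-1}) \to 0$ and hence the conjectured convergence of the ratio differences. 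The eventual monotonicity of $(n+1)r_n - nr_{n-1}$ itself appears to require a second-order Newton / ultra-log-concavity-type inequality on $(b_n)$, which seems to be the hardest step and the one most likely to need structural input specific to $S$.
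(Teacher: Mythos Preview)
The paper does not prove this statement: it is presented as a \emph{conjecture} in the Future Work section, supported only by numerical evidence for a handful of pattern sets $S$. There is no proof in the paper to compare against.

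That said, your proposal is not without value. The Fekete step is genuinely correct and does not appear in the paper: disjoint union of $S$-avoiding forests is $S$-avoiding, so $f_{m+n}(S)\ge\binom{m+n}{m}f_m(S)f_n(S)$, whence $b_n=f_n(S)/n!$ is supermultiplicative and $b_n^{1/n}$ converges to some $L\in[0,e]$ by Fekete together with the crude bound $f_n(S)\le(n+1)^{n-1}$. This already settles the existence of $\lim_n f_n(S)^{1/n}/n$, which is part of the conjecture. Your radius-of-convergence transfer via $B=e^C$ then correctly gives $\limsup c_n^{1/n}=L$.

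The remaining assertions, however, are exactly where the conjecture lives, and your proposal is candid that they are not proved. Upgrading $\limsup c_n^{1/n}=L$ to a genuine limit requires a lower bound on $c_n$ that you do not establish; you invoke log-convexity of $(c_n)$ but give no argument for it, and the combinatorial injections you allude to are not known to respect $S$-avoidance for arbitrary $S$. The refined claims about eventual monotonicity and convergence of $(n+1)r_n-nr_{n-1}$ require, as you note, a quantitative rate $L-r_n=O(n^{-1-\epsilon})$ and a second-order log-concavity-type inequality, neither of which you supply. These are not minor technicalities: they are the substance of the conjecture, and nothing in your outline indicates how to obtain them for general $S$. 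In short, you have proved one clause of the conjecture (existence of the forest limit) and correctly reduced the rest to plausible-sounding but unproved analytic and combinatorial inequalities; the conjecture remains open.
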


Here $e$ denotes Euler's constant. Based on data, we conjecture $C_{213, 231} \approx 0.557864$, $C_{213} \approx 0.65521$, $C_{213, 321} \approx 0.5530$, $C_{213, 123} \approx 0.555843$, and $C_{123} \approx 0.6801$.

The recent paper \cite{ren} also makes progress on this conjecture.

We also make the following general conjecture.

\begin{conjecture}
If the two sets $S$ and $S'$ of patterns are forest-Wilf equivalent, then $S$ and $S'$ are Wilf equivalent (with respect to pattern avoidance in permutations).
\end{conjecture}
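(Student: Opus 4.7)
The plan is to reduce the problem in two stages: from forests to trees, and then from trees to paths (i.e., permutations). For the first reduction, the recurrence \eqref{eq:forest-from-tree} yields on exponential generating functions the identity $F_S(x) = \exp(T_S(x))$, where $F_S$ and $T_S$ are the EGFs for $f_n(S)$ and $t_n(S)$. Hence the hypothesis $f_n(S) = f_n(S')$ for all $n$ is equivalent to $t_n(S) = t_n(S')$ for all $n$, and we may work with trees throughout. Writing $p_n(S)$ for the number of length-$n$ permutations avoiding $S$, the goal becomes: show $t_n(S) = t_n(S')$ for all $n$ forces $p_n(S) = p_n(S')$ for all $n$.

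For the second reduction, decompose by unlabeled rooted tree shape:
$$t_n(S) = \sum_{\tau} N_\tau(S),$$
where $\tau$ ranges over rooted tree shapes on $n$ vertices and $N_\tau(S)$ counts $[n]$-labelings of $\tau$ that avoid $S$. If $\tau_n^{\mathrm{path}}$ denotes the $n$-vertex path rooted at one endpoint, then its labelings correspond bijectively to length-$n$ permutations (the sequence of labels from the root down) and avoidance is preserved, so $N_{\tau_n^{\mathrm{path}}}(S) = p_n(S)$. Thus the target count $p_n(S)$ appears as one distinguished summand of $t_n(S)$.

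I would proceed by induction on $n$. Assume $p_k(S) = p_k(S')$ for all $k < n$, and try to show that for every non-path shape $\tau$ on $n$ vertices, $N_\tau(S)$ depends on $S$ only through $p_1(S), \dots, p_{n-1}(S)$ via the same function in $S$ and $S'$. Granting this, the identity
$$p_n(S) = t_n(S) - \sum_{\tau \neq \tau_n^{\mathrm{path}}} N_\tau(S),$$
combined with $t_n(S) = t_n(S')$, immediately yields $p_n(S) = p_n(S')$. The natural attack is to decompose a non-path $\tau$ at its lowest branching vertex $v$, writing $N_\tau(S)$ as a sum over the choices of labels on the root-to-$v$ segment and on the subtrees hanging off $v$, each of which has fewer than $n$ vertices and should recursively reduce to permutation data on smaller sizes.

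The main obstacle is precisely the failure of tree labelings to decouple cleanly at branching vertices. While each individual root-to-leaf path is a permutation whose $S$-avoidance mimics the permutation case, distinct branches share all their common ancestors, so a given label (most notably the root's) participates simultaneously in avoidance constraints with labels spread across every branch; for patterns of length at least $3$, the number of valid joint labelings of a branching shape appears a priori richer than any quantity expressible through permutation counts on smaller sizes alone. If this direct reduction fails, fallback strategies would be to first strengthen the hypothesis (for instance, to prove that forest-Wilf equivalence already implies forest-structure-Wilf equivalence, from which the path-shape case is immediate), or to attack the conjecture case by case in the maximum pattern length of $S$; the general statement may well require a genuinely new ingredient.
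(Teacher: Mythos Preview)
The statement you are addressing is a \emph{conjecture} in the paper, appearing in the Future Work section; the paper offers no proof or proof sketch for it. So there is nothing to compare your attempt against.

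Your first reduction is sound: the relation $t = \log f$ on exponential generating functions (derived in \cref{sec:treeforest}) shows that $f_n(S)=f_n(S')$ for all $n$ is equivalent to $t_n(S)=t_n(S')$ for all $n$, so working with trees is legitimate.

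The second reduction, however, has exactly the gap you yourself flag. The claim that for every non-path shape $\tau$ the count $N_\tau(S)$ is determined by $p_1(S),\dots,p_{n-1}(S)$ alone is not established, and there is no reason to expect it to hold. Already for the simplest branching shape (a root with two children, each a leaf), $N_\tau(S)$ depends on the specific length-$3$ patterns in $S$ in a way that is not recoverable from $p_1(S)$ and $p_2(S)$; and for deeper branching shapes the joint constraints across branches are genuinely new data not encoded in any smaller permutation counts. Your fallback of first proving that forest-Wilf equivalence implies forest-structure-Wilf equivalence would indeed settle the conjecture immediately, but that implication is itself open (the paper raises it as a possible strengthening in the same section). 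As it stands, your proposal is an honest outline of a natural first approach together with a correct diagnosis of why it stalls, not a proof; the conjecture remains open.
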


One may make the stronger conjecture that $S$ and $S'$ are even forest-structure-Wilf equivalent. By \cref{cor:cfw-and-cw}, the analogous statements for c-Wilf equivalence do not hold. So far, all instances of forest-Wilf equivalence we have found imply Wilf equivalence and forest-structure-Wilf equivalence.

We also wonder whether there are bijections between forests (rather than just clusters) in the consecutive pattern case, similar to in the non-consecutive case.

\begin{question}
Is there an explicit bijection between forests with $m$ consecutive instances of $1324$ and $m$ consecutive instances of $1423$, for general $m$?
\end{question}

Finally, we ask the following question.
\begin{question}
Are there nontrivial single-pattern c-forest-Wilf equivalences other than the one between $1324$ and $1423$?
\end{question}
Using a computer, we have verified that no other nontrivial c-forest-Wilf equivalences exist up to patterns of length $5$. This question has since been answered in the affirmative by \cite{ren}.

\section{Acknowledgments}
This research was funded by NSF/DMS grant 1659047 and NSA grant H98230-18-1-0010. The authors would like to thank Prof.\ Joe Gallian for organizing the Duluth REU where this research began and suggesting the topic of research, as well as advisors Aaron Berger and Colin Defant. We would also like to thank Ashwin Sah, Mehtaab Sawhney, Shyam Narayanan, and Defant for their helpful suggestions in discussing possible directions of research, and Amanda Burcroff, Defant, Gallian, and Sah for providing helpful comments on the initial drafts. We also thank Peter Luschny for his discussions regarding the Bell transform. The authors especially acknowledge Sah's extensive advice. We also thank the anonymous reviewer for their detailed and helpful comments.

\bibliographystyle{abbrv}
\bibliography{main}

\end{document}